\newcommand{\set}[1]{\left\{#1\right\}}
\newcommand{\norm}[1]{\lVert#1\rVert}
\newcommand{\ind}[1]{\mathbbm{1}_{#1}}
\newcommand{\abs}[1]{\left\vert#1\right\vert}
\newcommand{\ex}[1]{\mathsf{E}\left[\,#1\,\right]}
\newcommand{\R}{{\mathbb R}}
\newcommand{\N}{{\mathbb N}}
\newcommand{\Z}{{\mathbb Z}}
\newcommand{\eps}{\varepsilon}
\newcommand{\al}{\alpha}
\newcommand{\la}{\lambda}
\newcommand{\niy}{\ensuremath{n\to\infty}}
\newcommand{\widesim}[2][1.5]{
  \mathrel{\overset{#2}{\scalebox{#1}[1]{$\sim$}}}
}
\newtheorem{theorem}{Theorem} 
\newtheorem{lemma}{Lemma} 
\newtheorem{corollary}{Corollary} 
\theoremstyle{remark}
\newtheorem{remark}{Remark} 
\theoremstyle{definition}
\begin{document}
\title{Nonparametric estimation of the kernel function of symmetric stable moving average random functions}
\author{J\"urgen Kampf\footnote{Institute of Mathematics, University of Rostock, Ulmenstra{\ss}e 69/3, 18057 Rostock, Germany, email: {\tt juergen.kampf@uni-rostock.de}, ORCID: 0000-0001-8370-6882}  \and Georgiy Shevchenko\footnote{Department of Probability Theory, Statistics and Actuarial Mathematics, Taras Shevchenko National University
of Kiev, Volodymyrska 64, Kyiv 01601, Ukraine, email: {\tt zhora@univ.kiev.ua}, ORCID: 0000-0003-1047-3533} \and Evgeny Spodarev\footnote{Institute of Stochastics, Ulm University, Helmholtzstr. 18, 89081 Ulm, Germany, email: {\tt evgeny.spodarev@uni-ulm.de} } }

\maketitle

\begin{abstract}
We estimate the kernel function of a symmetric alpha stable ($S\al S$) moving average random function which is observed on a regular grid of points. The proposed estimator relies on the empirical normalized (smoothed) periodogram. It is shown to be weakly consistent for positive definite kernel functions, when the grid mesh size tends to zero and at the same time the observation horizon tends to infinity (high frequency observations). A simulation study shows that the estimator performs well at  finite sample sizes, when the integrator measure of the moving average random function is $S\al S$ and for some other infinitely divisible integrators.
\end{abstract}

{\bf Keywords:}
{High frequency observations},
{Moving average random function},
{Self-normalized periodogram},
{Stable random function}

{\bf MSC:} 60G52, 62G20, 62M40

\section{Inverse problem}\label{sect.Intro}

We consider the problem of estimation of a  kernel
$f: \R\to \R$, $f\in L^{\alpha}(\R)$ from observations of the $S\al S$ stationary ({\it moving average}) random function 
\begin{equation}\label{eq:X}
X(t) = \int_{\R} f(t-s) \Lambda(ds), \quad t\in \R,
\end{equation}
\rm where $\Lambda$ is a $S\al S$ random measure with independent increments and Lebesgue control measure, see e.g. \cite{SamTaq94} for more details on $S\al S$ moving averages. While the integrator $\Lambda$ determines the marginal properties of $X$, the kernel function $f$ forms its dependence structure.  The stability index $\alpha \in (0,2)$ which controls the heaviness of the tails of $X$ is assumed to be known.   If $\alpha$ is unknown then it has to be additionally estimated and used as a plug-in in what follows, but this is out of the scope of this paper. 

The class of stochastic processes \eqref{eq:X} includes stable CARMA processes, cf.\ \cite{BrockLind2009}, and in particular the stable Ornstein-Uhlenbeck process. These processes are popular in econometry and finance, e.g.\ they have served as (an essential part of) a model for electricity spot and future prices \cite{GKM11, MueSei19} or for the rates of interbank loans \cite{JanOrzWyl11}; see \cite{Brockwell14} for an overview. 

Our aim is to provide a non-parametric estimator for the function $f$. We assume that the observations are taken at the points $\set{t_{k,n}, k=1,\dots,n}$, where $t_{k,n} = k \Delta_n$,  $\Delta_n \to 0$, $n\to\infty$, and $n\Delta_n \to\infty, n\to\infty$. So we have high frequency observations, and the observation horizon expands to the whole $\R_+$. In other words, we try to solve the inverse problem 
\begin{equation}\label{eq:InvPro}
\{ X(t_{k,n}), \; k=1,\dots,n, \; n\in\N\} \mapsto f\in L^{\alpha}(\R).
\end{equation}

In \cite{Miketal93}, this problem was solved for a moving average time series $X$ with innovations belonging to the domain of attraction of the stable law. For  $X$ being $\alpha$-stable, $1<\alpha<2$, the parametric estimation of $f$ via a minimum contrast method for the first--order madogram of $X$ is performed in \cite{KarcherSchefflerSpo09}. A non--parametric estimator of a piecewise constant symmetric $f$ based on the covariation of $X$ was proposed in \cite{KarcherSpo12}. However, this procedure is defined recursively and thus errors made at one step influence all following steps. 

Problem \eqref{eq:InvPro} for random process \eqref{eq:X} with square integrable random measure $\Lambda$ and causal $f$, i.e., ${\rm supp}\, f \subseteq \R_+$, was treated in \cite{BroFerKl2013}. There a non-parametric estimator for the kernel function $f$ was proposed and its consistency was shown under CARMA assumptions. The estimator made use of the Wold expansion of the sampled process $X$.

Here we extend the ideas of the paper \cite{Miketal93} and use the empirical (properly normalized) periodogram of the random function $X$ to estimate the symmetric uniformly continuous kernel function $f$ of positive type satisfying some additional assumptions if the stability index $\alpha\in(0,2)$ is known. 
The paper is organized as follows. In the next section, we discuss conditions on $f$ which would guarantee the existence and uniqueness of solution of the problem \eqref{eq:InvPro}. After introducing the normed smoothed periodogram and the estimator for $f$ in Section \ref{sect.Est}, the  weak consistency of the kernel estimation is stated in Section \ref{sect.Main}. There, Theorems \ref{T:main} and \ref{T:main_unboundedSupport} treat the cases of compact and unbounded support of $f$, respectively.
The consistency of the estimation of the $L^2$--norm of $f$ is treated in Corollary \ref{T:norm}. For the ease of reading, proofs are moved to Appendices A (Theorem \ref{T:main} and Theorem \ref{T:main_unboundedSupport}) and B (auxiliary lemmata).  
A simulation study shows the good performance of estimation in Section \ref{sect:Sim}. There, the scope of applicability of this estimation method is studied empirically. The estimator performs well also for skewed stable, symmetric infinitely divisible and for Gaussian $\Lambda$, whereas it fails to work with some skewed non--stable $\Lambda$. We conclude with a summary and conjectures (Section \ref{sect:Sum}).

\section{Existence and uniqueness of the solution}\label{sect.ExU}

As most of the inverse problems, the problem \eqref{eq:InvPro} of restoring $f$ from observations of $X$ is in general ill posed. Here we discuss the additional conditions to impose onto $f$ to make \eqref{eq:InvPro} have a unique solution.

Notice that the spectral representation \eqref{eq:X} of $X$ for $0 <\alpha \le 2$ is not unique.  However, it is shown in  \cite[Example 3.2]{Rosinski94}  for $0 <\alpha < 2$ that two functions $f_1,f_2\in L^{\alpha}(\R)$ fulfilling \eqref{eq:X} are connected by $f_2(t)=\pm f_1(t+t_0)$ for almost all $t\in \R$ and for some fixed $t_0\in \R$. 
Let $\hat{f}$ be the Fourier transform of $f$, and let $\hat{f}^{-1}$ be its inverse, whenever these exist. We additionally assume that 
\begin{enumerate}[{(F}1)]
	\item $f$ is {\it positive semidefinite}.\label{i:F_posdef}
\end{enumerate}	
It follows from \cite[6.2.1]{TrigubBellin04} that $f$ is \emph{even} (or \emph{symmetric}), i.e.\ $f(t)=f(-t)$ for all $t\in\mathbb{R}$. 
Under the condition (F\ref{i:F_posdef}), it can be easily shown that $f_1=f_2$ a.e.\ on $\R$, i.e., $f$ is determined uniquely a.e.\ on $\R$. In the Gaussian case $\alpha=2$, the existence of the so called \emph{canonical kernel} can be shown  for a centered purely nondeterministic mean square continuous $X$, see \cite[Theorem 3.4]{HidaHitsuda93}.  The uniqueness of $f$ can not be guaranteed. However,  under some additional assumptions $f$ is unique which can be shown directly by the following covariance--based approach.

Let $X$ in \eqref{eq:X} be an infinitely divisible moving average random function with finite second moments, i.e., $\Lambda$ be an infinitely divisible independently scattered random measure with Lebesgue control measure,  $\ex{\Lambda^2(B)}<\infty$ for any bounded Borel set  $B\subset \R$, and  $f\in C(\mathbb{R}) \cap L^{1}(\R)\cap L^{2}(\R)$.  
 Then
the covariance function of $X$  is given by
$$C(t)= \mbox{Cov}\left( X(0), X(t) \right) = \int_{\mathbb{R}} f(t-s) f(-s) \, ds, \quad t\in\mathbb{R}. $$
Applying the Fourier transform,  we get $\hat{C}= \hat{f}^2 $, and hence the relation 
 \begin{equation}\label{eq:uniqueGaussCase}
  f=  \widehat{\sqrt{\hat{C}}}^{-1} \mbox{a.e.\ on $\mathbb{R}$}
 \end{equation}
proves the uniqueness of $f$  in the Gaussian case.  Assumption (F\ref{i:F_posdef}) is needed in order to reconstruct $f$ uniquely from the absolute  value of its Fourier transform. Indeed, under the condition $f\in C(\mathbb{R}) \cap L^1(\mathbb{R})$ it can easily be shown by the Bochner-Khintchine theorem, see e.g.\ \cite[6.2.3]{TrigubBellin04} or \cite[p.\ 54]{Akhiezer88}, that (F\ref{i:F_posdef}) is equivalent to $\hat f(\lambda) \ge 0$ for all $\lambda \in\mathbb{R}$, i.e.\ $f$ being of \emph{positive type}. In turn, to show that $f$ being of positive type implies (F\ref{i:F_posdef}) one also has to use the inversion formula for Fourier transforms which holds almost everywhere (for short, a.e.) on $\mathbb{R}$ by \cite[p. 17--18, Corollary 2 and Theorem 2]{Akhiezer88} or by \cite[3.1.10 and 3.1.15]{TrigubBellin04}.
Relation \eqref{eq:uniqueGaussCase} can be used to build a strongly consistent estimator of a symmetric piecewise constant compact supported $f$ if smoothed spectral density estimates are used (cf.\ e.g.\ \cite[\S \ 3.3]{Karcher12}). 

It is worth mentioning that under low frequency observations, it is in general not possible to identify $f$ in a unique way. Indeed, let $\Delta_n = \Delta$ be constant. Define for any $h\in L^\alpha[-\Delta/2,\Delta/2]$ with $\norm{h}_\alpha =1$ the process $$X_h(t) = \int_{-\Delta/2}^{\Delta/2} h(t-s)\Lambda(ds).$$ Then the observations $\set{X_h(t_{k,n}), k=1,\dots,n}$ are iid S$\alpha$S with scale parameter $1$, so their distribution does not depend on $h$. 
	
Why the observation interval should expand infinitely, is less obvious. In the Gaussian case,  on any finite interval $[0,t]$ it is possible to construct stationary processes such that the corresponding probability measures on $C[0,t]$ are different but the processes have the same distribution. Therefore, one is not able to identify the kernel function (not even the distribution) from observations of the process on a finite interval. However, to the best of our knowledge, there are no such results in the stable case. 

\section{Estimators}\label{sect.Est}

We use the following notation: $a_n = o_P(b_n)$, 
\niy, means $a_n/b_n\overset{P}{\longrightarrow} 0$, \niy; $a_n \widesim{P} b_n$, \niy, means $a_n/b_n\overset{P}{\longrightarrow} 1$, \niy; we write $a_n = O_P(b_n)$, \niy, if the sequence $\set{a_n/b_n,n\ge 1}$ is bounded in probability. The symbol $C$ will denote a generic constant, the value of which is not important.

To estimate the function $f$ in \eqref{eq:X}, we use the {\it self-normalized (empirical) periodogram} of $X$, defined as
\begin{equation}\label{SNP}
 I_{n,X}(\la) = \frac{\abs{\sum_{j=1}^{n} X(t_{j,n})e^{it_{j,n}\la}}^2}{\sum_{j=1}^{n}X(t_{j,n})^2}.
\end{equation}

It is known \cite[Theorem~2.11]{FasenFuchs2013a} that $\Delta_n \cdot I_{n,X}(\la)$ converges to a random limit as $n\to\infty$, and so it can not be a consistent estimator of any deterministic quantity of interest. Thus, following \cite{FasenFuchs2013b} we define its smoothed version. Let $\set{m_n,n\ge 1}$ be a sequence of positive integers such that $m_n\to\infty$ and $m_n = o(n)$, \niy. Consider a 
sequence of filters
$\set{W_{n}(m), \abs{m}\le m_n, n\ge 1}$ satisfying
\begin{enumerate}[{(W}1)]
	\item $W_n(m) \ge 0$; \label{item:Wpos}
	\item $\sum_{\abs{m}\le m_n} W_n(m) = 1$; \label{item:Wsum}
	\item $\max_{\abs{m}\le m_n} W_n(m) \to 0$, \niy; \label{item:Wf}
	\item $\sum_{|m|\le m_n} m^2 W_n(m) = o\big((n\Delta_n)^2\big)$, $\niy$.\label{item:W2m}
\end{enumerate}
In the following we will denote $W_n^* = \max_{\abs{m}\le m_n} W_n(m)$, $W_n^{(2)} = \sum_{|m|\le m_n} m^2 W_n(m)$.

Denote $\nu_{n}(m,\lambda) = \lambda + m/(n\Delta_n)$, $m=-m_n,\dots,m_n$. Then a smoothed periodogram is defined as
\begin{equation}\label{smoothed-periodogram}
	I_{n,X}^s (\la) = \sum_{\abs{m}\le m_n} W_n(m) I_{n,X}(\nu_{n}(m,\lambda)).
\end{equation}
\begin{remark}
The periodogram defined in \cite{FasenFuchs2013b} has the argument $\omega = \lambda/\Delta_n$. This explains why the quantity $\omega + m/n$ rather than $\lambda + m/(n\Delta_n)$ is used in their definition of smoothed periodogram. And as long as the main result of \cite{FasenFuchs2013b} concerns the limit behavior of smoothed periodogram evaluated at $\omega\Delta_n$, it is comparable with our findings. 
\end{remark}

For the sake of brevity, define the normalized function
$
g(t) = f(t)/\| f \|_2,
$
where $\| f \|_2=\sqrt{\int_{\R}f(x)^2\, dx}$ is the $L^2$--norm of $f$ whenever it is finite;  the Fourier transform of $g$ is 
$$
\hat g(\lambda) = \int_{\R} g(t) e^{-i\lambda t} dt,\quad \lambda \in\R,
$$
whenever it exists. First, we estimate $g$ and $\| f \|_2$ separately. If $\tilde{g}$ and $\widetilde{\| f \|}_2$ are their weakly consistent estimators, then $\tilde{f}= \widetilde{\| f \|}_2 \cdot \tilde{g}$ is a weakly consistent estimator of $f$.

Considering the fact that $\sqrt{\Delta_n I^s_{n,X}(\lambda)}$ is an estimator for $\hat g(\lambda)$ (see e.g.\ Theorem \ref{T:main} below), it is natural to estimate $g(t)$ by 
$\frac{1}{2\pi}\int_{\R} \sqrt{\Delta_n I^s_{n,X}(\lambda)} e^{it\lambda}\, d\lambda. $
However,  $\sqrt{\Delta_n I^s_{n,X}(\lambda)}\not\in L^1(\mathbb{R})$ a.s. Thus we put
\begin{equation}\label{eq:estg}
\tilde{g}(t)=\frac{1}{2\pi}\int_{[-a_n,a_n]} \sqrt{\Delta_n I^s_{n,X}(\lambda)} e^{it\lambda}\, d\lambda , \quad t\in\R,
\end{equation}
where $\set{a_n,n\ge 1}$ is a deterministic sequence with the following properties: 
\begin{enumerate}[{(A}1)]
	\item $a_n\to\infty$, $n\to\infty$; \label{item:ai}
	\item $a_n^2 W_n^*  \to 0$, $\niy$; \label{item:Wa}
	\item $a_n^{3/4} = o((n\Delta_n)^{1/\alpha})$, $\niy$; \label{item:a=nD}
	\item $a_n^2 \Delta_n \to 0$, \niy;
	\item $a_n^2 W_n^{(2)}  = o\big((n\Delta_n)^2\big)$, \niy. \label{item:aWnD}
\end{enumerate}
\begin{remark}
From (W\ref{item:Wpos}), (W\ref{item:Wsum}) and (W\ref{item:Wf}) it is clear that $\limsup_{n\to\infty} W_n^{(2)}>0$. Therefore, (A\ref{item:aWnD}) implies  that $a_n = o(n\Delta_n)$, \niy  \ (this will be used in the future). In particular, (A\ref{item:a=nD}) follows from (A\ref{item:aWnD}) for $\alpha\le \frac{4}{3}$. Besides this, the assumptions are rather independent.
\end{remark}

Let $f$ satisfy (F\ref{i:F_posdef}). Further assumptions depend on whether $f$ is compactly supported or not. In the case of compact support, we assume
\begin{enumerate}[{(F}1)]\setcounter{enumi}{1}
	\item\label{As:unif_cont} $a_n\omega_f(\Delta_n)\to 0$, \niy,
\end{enumerate}
where $\omega_f(\Delta_n) = \sup_{|t-s|<\Delta_n}\abs{f(t)-f(s)}$ is the modulus of continuity of $f$. Clearly, assumption (F\ref{As:unif_cont}) implies the uniform continuity of $f$. Hence, $f$ is bounded, and then $f\in L^{p}(\R)$ for all $p\in (0,\infty]$. In the case of non-compact support, we assume (additionally to (F\ref{i:F_posdef})) that for some $a>\max\{2,1/\alpha\}$
\begin{enumerate}[{(F}1$'$)]\setcounter{enumi}{1}
  \item\label{i:F_awp} $a_n\omega_f(\Delta_n)^{1-1/a}\to 0$, \niy;
	\item\label{i:F_asy} $f(t) = O(\abs{t}^{-a}),\ \abs{t}\to\infty$;
	\item\label{i:F_awnD} $a_n^{3/4} = o\big(\omega_f(\Delta_n)^{1/(a\alpha)}(n\Delta_n)^{1/\alpha}\big)$, \niy.
\end{enumerate}
It follows from (F\ref{i:F_awp}$'$) and (F\ref{i:F_asy}$'$) that $f$ is uniformly continuous and bounded, $f\in L^{p}(\R)$ for  $p\in (\frac{1}{a}, \infty]$, e.g, $p= \alpha, 1, 2$. Hence $\hat f$ is bounded, too, and moreover, it is square integrable. 

\begin{remark}
The assumptions (F\ref{i:F_awp}$'$)--(F\ref{i:F_awnD}$'$) relate the size $a_n$ of ``integration window'' of the smoothed periodogram used in the estimator $\widetilde{g}$ with the regularity and the rate of decay of $f$. But this does not mean that the latter characteristics should be available a priori: usually the kernel $f$ can be assumed to be at least H\"older continuous, so we can choose $a_n = \log n$. Section \ref{sect:Sim} further clarifies this by giving explicit examples of kernels and corresponding sequences satisfying the above assumptions.
\end{remark}

\section{Main results}\label{sect.Main}

Here we state our main results about the weak consistency of the estimates of $g$ and $f$. 

\begin{theorem}\label{T:main}
Let $f$ be compactly supported and {\rm (F\ref{As:unif_cont}), (A\ref{item:ai})--(A\ref{item:aWnD}), (W\ref{item:Wpos})--(W\ref{item:W2m})} be satisfied.
\begin{enumerate}[(i)]
\item The following convergence in probability holds:
\begin{equation}\label{L2-conv-in-prob}
a_n\cdot \int_{-a_n}^{a_n}(\Delta_n I^s_{n,X}(\lambda) - |\hat{g}(\lambda)|^2)^2d\lambda \overset{P}{\longrightarrow} 0,\quad \niy.
\end{equation}
\item If additionally {\rm (F\ref{i:F_posdef})} is true then
$\|\tilde{g}-  g\|_2 \overset{P}{\longrightarrow} 0,\quad \niy.$
\end{enumerate}
\end{theorem}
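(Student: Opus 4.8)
The plan is to reduce part~(ii) to part~(i) via Plancherel's identity, and to prove part~(i) by approximating $X$ on the grid by a discrete moving average driven by i.i.d.\ $S\alpha S$ variables and then analysing the self-normalized periodogram of that driving noise.

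\emph{From (i) to (ii).} By (F\ref{i:F_posdef}) the kernel is of positive type, so $\hat g=\hat f/\|f\|_2\ge0$ is real and $\hat g=\sqrt{\abs{\hat g}^{2}}$. Since $g\in L^2(\R)$ with $\|g\|_2=1$ (whence $\hat g\in L^2(\R)$) and the Fourier transform of the estimator \eqref{eq:estg} is $\sqrt{\Delta_n I^s_{n,X}(\cdot)}\,\ind{[-a_n,a_n]}$, Plancherel's identity gives
\[
2\pi\,\|\tilde g-g\|_2^{2}=\int_{-a_n}^{a_n}\Bigl(\sqrt{\Delta_n I^s_{n,X}(\lambda)}-\hat g(\lambda)\Bigr)^{2}d\lambda+\int_{\abs\lambda>a_n}\abs{\hat g(\lambda)}^{2}\,d\lambda .
\]
The last (deterministic) integral tends to $0$ as $a_n\to\infty$; for the first, the bound $(\sqrt x-\sqrt y)^{2}\le\abs{x-y}$ for $x,y\ge0$ (with $x=\Delta_n I^s_{n,X}(\lambda)\ge0$, $y=\abs{\hat g(\lambda)}^{2}$) followed by Cauchy--Schwarz gives
\[
\int_{-a_n}^{a_n}\Bigl(\sqrt{\Delta_n I^s_{n,X}(\lambda)}-\hat g(\lambda)\Bigr)^{2}d\lambda\le\sqrt2\,\Bigl(a_n\!\int_{-a_n}^{a_n}\bigl(\Delta_n I^s_{n,X}(\lambda)-\abs{\hat g(\lambda)}^{2}\bigr)^{2}d\lambda\Bigr)^{1/2}\overset{P}{\longrightarrow}0
\]
by \eqref{L2-conv-in-prob}. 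Thus (F\ref{i:F_posdef}) is used only in this reduction, which is why it is absent from the hypotheses of~(i).

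\emph{Proof of (i), Steps 1--2: reduction to a noise periodogram.} Writing $X(t_{j,n})=\sum_{k\in\Z}\int_{t_{k,n}}^{t_{(k+1),n}}f(t_{j,n}-s)\,\Lambda(ds)$ and replacing $f(t_{j,n}-s)$ by $f(t_{j,n}-t_{k,n})$, approximate $X(t_{j,n})$ by the discrete moving average $\tilde X_{j,n}=\sum_{k\in\Z}f(t_{j,n}-t_{k,n})\eps_{k,n}$ with $\eps_{k,n}:=\Lambda([t_{k,n},t_{(k+1),n}))$ i.i.d.\ $S\alpha S$ of scale $\Delta_n^{1/\alpha}$. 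Since $f$ has compact support, $X(t_{j,n})-\tilde X_{j,n}$ is $S\alpha S$ with scale $O(\omega_f(\Delta_n))$, and an auxiliary lemma based on (F\ref{As:unif_cont}) shows this substitution alters the left-hand side of \eqref{L2-conv-in-prob} by $o_P(1)$. Interchanging summations and using Riemann-sum approximations, one obtains
\[
\sum_{j=1}^{n}\tilde X_{j,n}e^{it_{j,n}\nu}=\frac{\hat f(-\nu)}{\Delta_n}\sum_k\eps_{k,n}e^{it_{k,n}\nu}+r_n(\nu),\qquad\sum_{j=1}^{n}\tilde X_{j,n}^{2}=\frac{\|f\|_2^{2}}{\Delta_n}\sum_k\eps_{k,n}^{2}+r_n',
\]
where $r_n$ collects boundary and Riemann errors and $r_n'$ in addition the off-diagonal term $\sum_{k\ne\ell}\eps_{k,n}\eps_{\ell,n}\sum_j f(t_{j,n}-t_{k,n})f(t_{j,n}-t_{\ell,n})$ (and $\abs{\hat f(-\nu)}=\abs{\hat f(\nu)}$ since $f$ is real). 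Dividing, setting $\nu=\nu_n(m,\lambda)$ and smoothing,
\[
\Delta_n I^s_{n,X}(\lambda)-\abs{\hat g(\lambda)}^{2}=\abs{\hat g(\lambda)}^{2}\bigl(\tilde I^s_{n,\eps}(\lambda)-1\bigr)+\mathrm{Err}_n(\lambda),
\]
where $\tilde I^s_{n,\eps}(\lambda)=\sum_{\abs m\le m_n}W_n(m)\,\bigl\lvert\sum_k\eps_{k,n}e^{it_{k,n}\nu_n(m,\lambda)}\bigr\rvert^{2}\big/\sum_k\eps_{k,n}^{2}$ is the smoothed self-normalized periodogram of the noise and $\mathrm{Err}_n$ gathers $r_n,r_n'$ together with the replacement of $\abs{\hat g(\nu_n(m,\lambda))}^{2}$ by $\abs{\hat g(\lambda)}^{2}$, which is admissible because $\hat f$, the Fourier transform of a bounded compactly supported function, is globally Lipschitz, so this error is $O(\abs m/(n\Delta_n))$. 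One then verifies $a_n\int_{-a_n}^{a_n}\mathrm{Err}_n(\lambda)^{2}\,d\lambda\overset{P}{\longrightarrow}0$: here (W\ref{item:W2m}) and (A\ref{item:aWnD}) tame the smoothing bias, and the Riemann, boundary, off-diagonal and discretization contributions are controlled by (A\ref{item:ai})--(A\ref{item:aWnD}) and (W\ref{item:Wpos})--(W\ref{item:W2m}).

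\emph{Step 3 (the crux): the self-normalized periodogram of i.i.d.\ $S\alpha S$ noise.} It remains to prove $a_n\int_{-a_n}^{a_n}(\tilde I^s_{n,\eps}(\lambda)-1)^{2}\,d\lambda\overset{P}{\longrightarrow}0$. From $\bigl\lvert\sum_k\eps_{k,n}e^{it_{k,n}\nu}\bigr\rvert^{2}=\sum_k\eps_{k,n}^{2}+\sum_{k\ne\ell}\eps_{k,n}\eps_{\ell,n}e^{i(t_{k,n}-t_{\ell,n})\nu}$ one has
\[
\tilde I^s_{n,\eps}(\lambda)-1=\frac1{\sum_k\eps_{k,n}^{2}}\sum_{k\ne\ell}\eps_{k,n}\eps_{\ell,n}\,e^{i(t_{k,n}-t_{\ell,n})\lambda}\,\widehat W_n\!\Bigl(\tfrac{k-\ell}{n}\Bigr),\qquad\widehat W_n(\theta):=\sum_{\abs m\le m_n}W_n(m)e^{im\theta}.
\]
Conditioning on $(\abs{\eps_{k,n}})_k$, the signs $\sign(\eps_{k,n})$ are i.i.d.\ Rademacher, so only the pairings $\{k,\ell\}=\{k',\ell'\}$ survive in the conditional second moment, and a short computation bounds $\ex{a_n\int_{-a_n}^{a_n}(\tilde I^s_{n,\eps}(\lambda)-1)^{2}d\lambda\mid(\abs{\eps_{k,n}})_k}$ by
\[
\frac{C\,a_n^{2}}{\bigl(\sum_k\eps_{k,n}^{2}\bigr)^{2}}\sum_{h\ne0}\bigl\lvert\widehat W_n(\tfrac hn)\bigr\rvert^{2}\sum_k\eps_{k,n}^{2}\,\eps_{(k+h),n}^{2}.
\]
Proving that this tends to $0$ in probability is the heart of the matter, and I expect it to be the main obstacle. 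The guiding heuristic is that each $\eps_{k,n}^{2}$ lies in the domain of attraction of a positive $\tfrac\alpha2$-stable law, so $\sum_k\eps_{k,n}^{2}$ is of exact order $(n\Delta_n)^{2/\alpha}$ whereas the products $\eps_{k,n}^{2}\eps_{(k+h),n}^{2}$ ($h\ne0$) are much lighter-tailed, and the weights $\abs{\widehat W_n(h/n)}^{2}$ are spread out, $\sum_h\abs{\widehat W_n(h/n)}^{2}=O(nW_n^*)$ by discrete Parseval (valid since $m_n=o(n)$), so smoothing averages over nearly decorrelated frequencies; but the peak of $\abs{\widehat W_n}^{2}$ near $0$ must be offset against the heavy-tailed sums with care, and this is where the auxiliary lemmata on sums of i.i.d.\ stable variables (Appendix~B) and the full force of (A\ref{item:ai})--(A\ref{item:aWnD}), (W\ref{item:Wpos})--(W\ref{item:W2m}) enter, much in the spirit of the smoothed-periodogram analysis of \cite{FasenFuchs2013b}. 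Everything preceding Step~3 is comparatively routine approximation and moment bookkeeping.
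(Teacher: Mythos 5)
Your reduction of (ii) to (i) is exactly the paper's argument (Plancherel, $(\sqrt x-\sqrt y)^2\le\abs{x-y}$, Cauchy--Schwarz, and the tail of $\hat g$ in $L^2$), and your architecture for (i) --- discretize $f$ on the grid, factor the periodogram of the moving average into $\abs{\hat f}^2$ times noise quantities, and reduce everything to the off-diagonal quadratic form in the $\eps_{k,n}$ --- matches the paper's Steps 1--3 in substance. But the proposal stops exactly where the proof becomes nontrivial, and you say so yourself: you do not prove that the cross term $\sum_{k\ne\ell}a_{k,\ell,n}(\lambda)\eps_{k,n}\eps_{\ell,n}$ is negligible relative to $E_n=\sum_k\eps_{k,n}^2$. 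This is a genuine gap, not bookkeeping. Your route via conditioning on $(\abs{\eps_{k,n}})_k$ and Rademacher signs leaves you with the random quantity
$$\frac{a_n^{2}}{\bigl(\sum_k\eps_{k,n}^{2}\bigr)^{2}}\sum_{h\ne0}\aabs{\widehat W_n(h/n)}^{2}\sum_k\eps_{k,n}^{2}\,\eps_{k+h,n}^{2},$$
which has no finite expectation (each $\eps_{k,n}^2$ has tail index $\alpha/2<1$), and whose numerator is \emph{not} automatically of smaller order than the denominator: the double sum $\sum_{h}\sum_k\eps_{k,n}^2\eps_{k+h,n}^2$ is itself of order $\bigl(\sum_k\eps_{k,n}^2\bigr)^2$, so the crude bound $\abs{\widehat W_n}\le1$ only yields $O(a_n^2)$, and to do better you must rule out that the few indices carrying the bulk of the heavy-tailed mass line up with the lags $h$ where $\widehat W_n(h/n)\approx1$. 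Nothing in your heuristic ($\sum_h\abs{\widehat W_n(h/n)}^2=O(nW_n^*)$, ``lighter-tailed products'') settles this.

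The paper resolves precisely this point by a different conditioning. Via the LePage series (Lemma \ref{lem:lepage}) one writes $\eps_{m,n}\overset{d}{=}n^{1/\alpha}C_\alpha^{1/\alpha}\sum_k\Gamma_k^{-1/\alpha}\ind{[m-1,m]}(\xi_k)\zeta_k$ with Gaussian $\zeta_k$; conditioning on the Poisson arrivals $\Gamma$ (and the $\xi$'s) makes the cross term a second-order Gaussian chaos with all moments finite, the disjointness of the intervals kills the diagonal pairings, and the factor $\pr(\xi_k\in[j-1,j])\pr(\xi_{k'}\in[l-1,l])=n^{-2}$ produces the decisive gain: $\int_\R\aabs{\sum_{j<l}a_{j,l,n}(\lambda)\eps_{j,n}\eps_{l,n}}^2d\lambda=O_P(A_n\,n^{4/\alpha-2})$ (Lemma \ref{lemma-crossums-int}), with the heavy tails absorbed into the a.s.\ convergent series $\sum_k\Gamma_k^{-2/\alpha}$. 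Combined with the coefficient bound of Lemma \ref{smoothingestimate} this gives the $O_P(a_n^2W_n^*E_n^2)$ estimate that (A\ref{item:Wa}) was designed for. The same LePage device also underlies the ``routine'' approximation steps you defer to auxiliary lemmata (Lemmas \ref{lem-smallsquares} and \ref{lem-smallsquares-int}), so it is not optional machinery but the engine of the whole proof. To complete your argument you would either have to import this representation or supply a genuinely different analysis of the heavy-tailed quadratic form, e.g.\ point-process/extremal arguments in the style of Davis--Resnick; as written, the central claim of Step 3 is asserted, not proved.
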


\begin{remark}
Carefully examining the proof, we can bound the rate of convergence in \eqref{L2-conv-in-prob} by
$$
O_P\big(a_n^2\omega_f(\Delta_n)^2 + a_n^2W^*_n + a_n^2 W_n^{(2)}(n\Delta_n)^{-2} + a_n^4 \Delta_n^2\big),\quad \niy.
$$
\end{remark}

\begin{remark}
Using \cite[Lemma~2.3]{BrockLind2009} it can be shown that in $CARMA(p,q)$ models $\abs{\hat g(\lambda)}^2$ coincides with the power transfer function if $p> q+1$. Thus Theorem \ref{T:main}(i) shows that $\Delta_nI^s_{n,X}(\lambda)$ is a weakly consistent estimator for the power transfer function. However, this is already known \cite[Theorem 1]{FasenFuchs2013b} under the weaker assumption $p> q$.  
\end{remark}

\begin{theorem}\label{T:main_unboundedSupport}
The assertion of Theorem {\rm \ref{T:main}} holds true also under the assumptions {\rm (F\ref{i:F_posdef}), (F\ref{i:F_awp}$'$)--(F\ref{i:F_awnD}$'$), (A\ref{item:ai})--(A\ref{item:aWnD}),  (W\ref{item:Wpos})--(W\ref{item:W2m})}.
\end{theorem}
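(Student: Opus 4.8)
The plan is to reduce Theorem~\ref{T:main_unboundedSupport} to Theorem~\ref{T:main} by a truncation argument. Since $f$ decays like $\abs{t}^{-a}$ with $a>\max\{2,1/\alpha\}$, for a cutoff level $R_n\to\infty$ the truncated kernel $f_n := f\cdot\ind{[-R_n,R_n]}$ is compactly supported and satisfies $\norm{f-f_n}_p^p = O(R_n^{1-ap})$ for every $p\in(1/a,\infty]$; in particular $\norm{f-f_n}_2\to 0$ and $\norm{f-f_n}_\alpha\to 0$. One then writes $X = X_n + (X-X_n)$, where $X_n(t)=\int f_n(t-s)\,\Lambda(ds)$ is a compactly supported moving average to which Theorem~\ref{T:main} applies (after checking that $f_n$ inherits assumption (F\ref{As:unif_cont}) with a suitable auxiliary sequence, which follows from (F\ref{i:F_awp}$'$), and that (A\ref{item:a=nD}) holds for $f_n$ via (F\ref{i:F_awnD}$'$)), and the remainder term is controlled using the decay assumption (F\ref{i:F_asy}$'$).

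The core of the argument is to show that replacing $X$ by $X_n$ in the self-normalized smoothed periodogram $\Delta_n I^s_{n,X}(\lambda)$ introduces an error that is negligible in the sense of \eqref{L2-conv-in-prob}. I would first handle the denominator $\sum_{j=1}^n X(t_{j,n})^2$ and numerator $\abs{\sum_{j=1}^n X(t_{j,n})e^{it_{j,n}\nu}}^2$ separately. For $S\alpha S$ variables one has no second moments when $\alpha<2$, so the natural tool is the fact (as in \cite{Miketal93,FasenFuchs2013a}) that $\Delta_n^{2/\alpha}\sum_{j=1}^n X(t_{j,n})^2$ and the analogous weighted sums are $O_P$ of the right order; the Riemann-sum nature of the grid $t_{j,n}=j\Delta_n$ with $\Delta_n\to 0$, $n\Delta_n\to\infty$ makes these sums behave, after normalization, like integrals involving $\norm{f}_\alpha^\alpha$. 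The difference $X-X_n$ contributes, in these normalized sums, a term controlled by $\norm{f-f_n}_\alpha^\alpha = O(R_n^{1-a\alpha})$, which tends to $0$. Combining the numerator and denominator estimates via the elementary inequality $\abs{a^2/b^2 - c^2/d^2}$ bounded in terms of $\abs{a-c}$, $\abs{b-d}$ and lower bounds on $b,d$, one transfers the conclusion \eqref{L2-conv-in-prob} from $X_n$ to $X$, provided the extra factor $a_n$ in front of the integral and the length $2a_n$ of the integration interval are absorbed; this is where assumptions (A\ref{item:ai})--(A\ref{item:aWnD}) and the modified (F$'$) assumptions enter, and one chooses $R_n\to\infty$ slowly enough (e.g.\ $R_n = \omega_f(\Delta_n)^{-c}$ for a small $c>0$, matching the exponents $1-1/a$ and $1/(a\alpha)$ appearing in (F\ref{i:F_awp}$'$) and (F\ref{i:F_awnD}$'$)) so that the truncation error beats the powers of $a_n$.

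Once part~(i) is established for $X$, part~(ii) follows exactly as in Theorem~\ref{T:main}: under (F\ref{i:F_posdef}) the kernel $g$ is even and of positive type, so $\hat g\ge 0$ and $g = \frac{1}{2\pi}\int \hat g(\lambda)e^{it\lambda}\,d\lambda$; since $\hat g\in L^2$ (guaranteed here by (F\ref{i:F_awp}$'$)--(F\ref{i:F_asy}$'$), which was the reason those were imposed in place of (F\ref{As:unif_cont})), the $L^2$-convergence $a_n\int_{-a_n}^{a_n}(\Delta_n I^s_{n,X}-\abs{\hat g}^2)^2\to 0$ together with $\sqrt{\,\cdot\,}$ being $1/2$-Hölder and $\hat g\in L^1\cap L^2$ yields $\norm{\sqrt{\Delta_n I^s_{n,X}}\,\ind{[-a_n,a_n]} - \hat g}_2\overset{P}{\to}0$, and Plancherel's theorem gives $\norm{\tilde g - g}_2\overset{P}{\to}0$.

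The main obstacle I anticipate is the truncation step in the heavy-tailed regime: because $\alpha<2$ there is no variance to control cross terms between $X_n$ and $X-X_n$, so one must work with the stable structure directly (conditioning on the series representation of $\Lambda$, or using the $L^p$-bounds for $p<\alpha$ together with the explicit scale of $X-X_n$), and one must verify that the chosen cutoff $R_n$ is simultaneously compatible with all the quantitative assumptions (A1)--(A5) and (F1$'$)--(F4$'$). Making those exponent bookkeeping arguments line up — so that the truncation error is $o$ of $a_n^{-1}$ while $f_n$ still satisfies the hypotheses of Theorem~\ref{T:main} with its own auxiliary constants — is the delicate part; the rest is a repackaging of the compactly supported case.
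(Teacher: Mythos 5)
Your overall strategy -- truncate the kernel at a level tied to $\omega_f(\Delta_n)^{-1/a}$, treat the truncated part by the compact-support machinery, and control the tail via the decay assumption (F\ref{i:F_asy}$'$) and the LePage series representation of $\Lambda$ -- is exactly the route the paper takes (it sets $T_n=N_n\Delta_n\sim\omega_f(\Delta_n)^{-1/a}$ and reuses the conditioning-on-$\Gamma$ arguments), and your treatment of part (ii) coincides with the paper's. However, there is one step in your reduction that would fail as written: you cannot \emph{apply} Theorem \ref{T:main} to $X_n(t)=\int f_n(t-s)\Lambda(ds)$, because Theorem \ref{T:main} is an asymptotic statement about a single fixed compactly supported kernel, whereas your $f_n$ forms a triangular array whose support $[-R_n,R_n]$ grows with $n$. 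No uniformity in the support length $T$ is claimed in Theorem \ref{T:main} or in the rate of Remark 3, and in fact the $T$-dependence is not innocuous: the Riemann-sum error in the denominator becomes $O(T_n\omega_f(\Delta_n))$ rather than $O(\omega_f(\Delta_n))$, the boundary terms scale like $N_n^{2/\alpha}\Delta_n^{2/\alpha-1}$, and the choice of the auxiliary sequence $\theta_n$ in Step 3b must be recalibrated to $a_n^{1/4}N_n^{1/\alpha}n^{-1/\alpha}$ -- this is precisely why (F\ref{i:F_awp}$'$) and (F\ref{i:F_awnD}$'$) carry the extra factors $\omega_f(\Delta_n)^{-1/a}$ compared with (F\ref{As:unif_cont}) and (A\ref{item:a=nD}). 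So the reduction must reopen the proof of Theorem \ref{T:main} and track the $T_n$-dependence of every estimate, which is what the paper does.

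A second, smaller point you gloss over: controlling the remainder $X-X_n$ by $\norm{f-f_n}_\alpha^\alpha$ alone does not suffice, because what must be bounded is $\sum_{j=1}^n\left(X(t_{j,n})-X_n(t_{j,n})\right)^2$ relative to $E_n$, a joint statement over $n$ strongly dependent heavy-tailed terms. The paper does this by the conditional second-moment computation given $\Gamma$ in the LePage series, and it is forced to split the tail into the two zones $|s-t_{j,n}|\in[T_n,n\Delta_n]$ and $|s-t_{j,n}|>n\Delta_n$ (the terms $S_{7,n}$, $S_{8,n}$ and $R_{7,n}$, $R_{8,n}$): the uniform sampling density in the LePage representation only covers a window of length $O(n\Delta_n)$, and the far tail requires the logarithmically weighted density $\varphi$ of \eqref{eq:phi}. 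Your single cutoff at $R_n$ leaves the entire region $|t|>R_n$ to be handled at once, which the uniform-density representation cannot do. Both issues are repairable and your sketch correctly anticipates the tools, but as stated the reduction to Theorem \ref{T:main} is not available.
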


Taking into account the evident relation $\| f \|_2=\| f \|_\alpha/\|g \|_\alpha,$ 
the estimation of the norm $\| f \|_2$   is reduced to the estimation of $\| f \|_\alpha=\sigma_{X(0)}$, the scale parameter of $X(0)$ (see \cite[Property 3.2.2]{SamTaq94}), and $\norm{g}_\alpha$.
In the literature, there is a number of estimators of scale available, see  \cite[Chapter 4]{Zol86}, \cite[Chapter 9]{ZolUchai99}.  Among those, we choose the 
 quantile estimator for the sake of its robustness. 
It is based on the fact that the quantiles of $X(0)$ are equal to those of $S_\alpha(1,0,0)$, multiplied by $\sigma_{X(0)}$. Taking different quantile levels, this can be used to construct a variety of estimators. The most popular choice is quartiles, so that the correspondent estimator is
\begin{equation}\label{est:EmpQuart}
\widetilde{\sigma}_{q}=\frac{\widetilde{x}_{3/4;n} - \widetilde{x}_{1/4;n}}{x_{3/4} - x_{1/4}},
\end{equation}
where $x_{1/4}$ and $x_{3/4}$ are, respectively, the lower and upper quartiles of $S_\alpha(1,0,0)$ and $\widetilde{x}_{1/4;n}$ and $\widetilde{x}_{3/4;n}$ are, respectively, the lower and upper empirical quartiles of the sample 
$\set{X(t_{k,n}), k=1,\dots,n}$.


It is well-known that estimator \eqref{est:EmpQuart} is a.s.\ consistent for i.i.d.\ observations, mixing sequences and some linear ergodic processes with or without heavy tails. The proof involves the Bahadur--Kiefer--type representation for the empirical quantiles of $X(0)$, cf. \cite{Hesse90,Wu05,Kulik07,Wangetal16} and references therein. For instance, if $X$ is ergodic (cf. \cite{Weron95} for sufficient conditions), its kernel function $f$ is simple (i.e., piecewise constant) and either compactly supported or satisfying condition  (F\ref{i:F_asy}$'$) then the a.s. consistency  of \eqref{est:EmpQuart} follows from \cite[Theorem 1]{Hesse90}.
We believe that it does so also for ergodic $X$ with general kernels $f$ satisfying (F\ref{i:F_asy}$'$) and some additional assumptions, but checking this carefully would blow up the size of this paper. Anyway, the results of our paper are applicable to {\it any} weakly consistent estimator of scale $\widetilde{\sigma}_{X(0)}$, whatever it is. 

Now let us turn to the estimation of $\norm{g}_\alpha$. In the case where $f$ is supported by $[-T,T]$ (and $T$ is known a priori), one can use the estimator
$$
\widetilde{\norm{g}}_{\alpha,T} = \left(\int_{-T}^{T} \abs{\tilde g(t)}^\alpha dt\right)^{1/\alpha}.
$$
In the case of unbounded support, we need a deterministic sequence $\set{b_n,n\ge 1}$ such that 
\begin{enumerate}[{(B}1)]
	\item $b_n\to\infty$, $n\to\infty$;
	\item $b_n^{2/\alpha-1}a_n^2 W_n^*  \to 0$, $\niy$; 
	\item $b_n^{2/\alpha-1}a_n = o((n\Delta_n)^{1/\alpha})$, $\niy$; 
	\item $b_n^{2/\alpha-1}a_n^4 \Delta^2_n \to 0$, \niy;
	\item $b_n^{2/\alpha-1}a_n^2 W_n^{(2)}  = o\big((n\Delta_n)^2\big)$, \niy;
	\item $b_n^{2/\alpha-1}a_n^2\omega_f(\Delta_n)^{2-2/a}\to 0$, \niy;
	\item $b_n^{2/\alpha-1}\int_{\set{\lambda:\abs{\lambda}>a_n}} \hat{g}(\lambda)^2 d\lambda\to 0$, \niy.
\end{enumerate}

With this at hand, an estimator for $\norm{g}_\alpha$ is constructed as
\begin{equation}\label{eq:normalphaestimator}
\widetilde{\norm{g}}_{\alpha,b_n} = \left(\int_{-b_n}^{b_n} \abs{\tilde g(t)}^\alpha dt\right)^{1/\alpha}.
\end{equation}

\begin{theorem}\label{T:est_norm_alpha}\begin{enumerate}[(i)]
\item Let $f$ be supported by $[-T,T]$ and the assumptions of Theorem~\ref{T:main}(ii) hold. Then 
$$
\widetilde{\norm{g}}_{\alpha,T} \overset{P}{\longrightarrow} \norm{g}_\alpha,\quad \niy.
$$
\item Under the assumptions of Theorem~\ref{T:main} and {\rm (B1)--(B7)},
$$
\widetilde{\norm{g}}_{\alpha,b_n} \overset{P}{\longrightarrow} \norm{g}_\alpha,\quad \niy.
$$
\end{enumerate}
\end{theorem}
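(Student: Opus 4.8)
The plan is to deduce both parts from the $L^2$-consistency of $\tilde g$ established in Theorem \ref{T:main}(ii), by controlling the difference $\widetilde{\norm{g}}_{\alpha,\cdot}-\norm{g}_\alpha$ through an interpolation between $L^2$ and $L^\infty$ bounds. For part 1, on the fixed compact interval $[-T,T]$ the map $h\mapsto\big(\int_{-T}^T|h|^\alpha\big)^{1/\alpha}$ is continuous on $L^2[-T,T]$: indeed, for $\alpha\le 2$ we have $\norm{\tilde g-g}_{L^\alpha[-T,T]}\le (2T)^{1/\alpha-1/2}\norm{\tilde g-g}_{L^2[-T,T]}\le (2T)^{1/\alpha-1/2}\norm{\tilde g-g}_2$ by Hölder, so $\norm{\tilde g-g}_{L^\alpha[-T,T]}\overset{P}{\to}0$; and since $\mathrm{supp}\,g\subseteq[-T,T]$, $\norm{g}_{L^\alpha[-T,T]}=\norm{g}_\alpha$. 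Then the (quasi-)triangle inequality for $\norm{\cdot}_\alpha$ — reverse triangle when $\alpha\ge1$, and $\big|\,\norm{u}_\alpha^\alpha-\norm{v}_\alpha^\alpha\,\big|\le\norm{u-v}_\alpha^\alpha$ when $\alpha<1$ — gives $\widetilde{\norm g}_{\alpha,T}\overset{P}{\to}\norm g_\alpha$.

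For part 2, the interval of integration now grows, so I would split
\[
\Big|\,\widetilde{\norm g}_{\alpha,b_n}-\norm g_\alpha\,\Big|
\le \Big|\,\widetilde{\norm g}_{\alpha,b_n}-\Big(\int_{-b_n}^{b_n}|g|^\alpha\Big)^{1/\alpha}\Big|
 + \Big|\,\Big(\int_{-b_n}^{b_n}|g|^\alpha\Big)^{1/\alpha}-\norm g_\alpha\Big|.
\]
The second term vanishes deterministically since $g\in L^\alpha(\R)$ (which holds under (F\ref{As:unif_cont}) trivially, and under (F\ref{i:F_awp}$'$)–(F\ref{i:F_asy}$'$) because $g\in L^p$ for $p\in(1/a,\infty]\ni\alpha$). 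For the first term, when $\alpha\ge1$ apply the reverse triangle inequality and then Hölder on $[-b_n,b_n]$ with exponents $2/\alpha$ and $2/(2-\alpha)$:
\[
\Big(\int_{-b_n}^{b_n}\big|\,|\tilde g|-|g|\,\big|^\alpha\,dt\Big)^{1/\alpha}
\le (2b_n)^{1/\alpha-1/2}\,\Big(\int_{-b_n}^{b_n}|\tilde g-g|^2\,dt\Big)^{1/2}
\le (2b_n)^{1/\alpha-1/2}\,\norm{\tilde g-g}_2,
\]
so the claim reduces to showing $b_n^{2/\alpha-1}\norm{\tilde g-g}_2^2\overset{P}{\to}0$; when $\alpha<1$ the same bound holds with $\alpha$ in place of $\alpha/1$ after using $\big|\,\norm u_\alpha^\alpha-\norm v_\alpha^\alpha\,\big|\le\norm{u-v}_\alpha^\alpha$ and the same Hölder step, reducing to the identical quantity. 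Thus everything hinges on an $L^2$-error bound for $\tilde g$ that is sharp in $b_n$.

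To produce such a bound I would not merely invoke Theorem \ref{T:main}(ii), but re-run its proof keeping track of the extra factor $b_n^{2/\alpha-1}$. By Plancherel, $\norm{\tilde g-g}_2^2 = \frac1{2\pi}\int_\R|\widehat{\tilde g}(\lambda)-\hat g(\lambda)|^2\,d\lambda$, which splits as $\frac1{2\pi}\int_{|\lambda|>a_n}\hat g(\lambda)^2\,d\lambda$ (the truncation error) plus $\frac1{2\pi}\int_{-a_n}^{a_n}\big(\sqrt{\Delta_n I^s_{n,X}(\lambda)}-|\hat g(\lambda)|\big)^2\,d\lambda$; the latter is at most $\int_{-a_n}^{a_n}\big(\Delta_n I^s_{n,X}(\lambda)-|\hat g(\lambda)|^2\big)^2\,d\lambda$ up to the elementary inequality $(\sqrt u-\sqrt v)^2\le|u-v|$, and by the remark following Theorem \ref{T:main} this is $O_P\big(a_n^2\omega_f(\Delta_n)^{2-2/a}+a_n^2W_n^*+a_n^2W_n^{(2)}(n\Delta_n)^{-2}+a_n^4\Delta_n^2\big)$ (with the sharper exponent $2$ on $\omega_f$ in the compact-support case). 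Multiplying through by $b_n^{2/\alpha-1}$, conditions (B2), (B4), (B5), (B6) kill the four stochastic terms and (B7) kills the truncation term; the remaining conditions (B1), (B3) are there to guarantee that the assumptions of Theorem \ref{T:main} are not violated and to feed the rate bound (in particular (B3) is the $b_n$-weighted analogue of (A\ref{item:a=nD}) needed inside the proof). The main obstacle is precisely this bookkeeping: one must verify that the $O_P$-rate from the remark after Theorem \ref{T:main} survives multiplication by $b_n^{2/\alpha-1}$ term by term, i.e. that (B2)–(B7) are exactly the hypotheses under which each contribution to $b_n^{2/\alpha-1}\norm{\tilde g-g}_2^2$ tends to $0$ in probability — a routine but careful matching of the seven $b_n$-conditions against the (at most) five error terms, with the understanding that the $\omega_f$-term is absent (or stronger) when $f$ has compact support.
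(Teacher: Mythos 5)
The paper itself contains no proof of Theorem~\ref{T:est_norm_alpha} (only Theorems~\ref{T:main} and \ref{T:main_unboundedSupport} are proved in Appendix~A), so your proposal cannot be matched against an official argument; I assess it on its own. Part~1 is correct and complete: H\"older on the fixed interval $[-T,T]$ gives $\norm{\tilde g-g}_{L^\alpha[-T,T]}\le (2T)^{1/\alpha-1/2}\norm{\tilde g-g}_2$, and the (quasi-)triangle inequality for $\norm{\cdot}_\alpha$ finishes it. The reduction in Part~2 to showing $b_n^{2/\alpha-1}\norm{\tilde g-g}_2^2\overset{P}{\longrightarrow}0$, together with the deterministic tail $\int_{\abs{t}>b_n}\abs{g}^\alpha\,dt\to 0$, is also the right skeleton and is surely what conditions (B1)--(B7) are aimed at.

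The gap is in the quantitative step of Part~2. The inequality $(\sqrt u-\sqrt v)^2\le\abs{u-v}$ bounds $\int_{-a_n}^{a_n}\bigl(\sqrt{\Delta_n I^s_{n,X}}-\hat g\bigr)^2d\lambda$ by the $L^1$-distance $\int_{-a_n}^{a_n}\abs{\Delta_n I^s_{n,X}-\hat g^2}\,d\lambda$, \emph{not} by $\int_{-a_n}^{a_n}(\Delta_n I^s_{n,X}-\hat g^2)^2d\lambda$ as you assert; these are not comparable (for a small constant discrepancy $c$ the $L^1$-integral is of order $a_nc$ while the squared one is of order $a_nc^2$). Passing to the squared quantity requires Cauchy--Schwarz, exactly as in the paper's own derivation of Theorem~\ref{T:main}(ii) from (i), and yields
\begin{equation*}
\norm{\tilde g-g}_2^2 \;\le\; \frac{1}{2\pi}\Bigl[\sqrt 2\,\Bigl(a_n\!\int_{-a_n}^{a_n}\!\bigl(\Delta_n I^s_{n,X}(\lambda)-\hat g(\lambda)^2\bigr)^2d\lambda\Bigr)^{1/2}+\int_{\set{\lambda:\abs{\lambda}>a_n}}\hat g(\lambda)^2d\lambda\Bigr],
\end{equation*}
i.e.\ the stochastic part enters through the \emph{square root} of the rate $r_n=a_n^2\omega_f(\Delta_n)^{2-2/a}+a_n^2W_n^*+a_n^2W_n^{(2)}(n\Delta_n)^{-2}+a_n^4\Delta_n^2$ from the remark after Theorem~\ref{T:main}. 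Hence $b_n^{2/\alpha-1}\norm{\tilde g-g}_2^2\to 0$ requires $b_n^{4/\alpha-2}\,r_n\to 0$ together with (B7), whereas (B2), (B4), (B5), (B6) only provide $b_n^{2/\alpha-1}r_n\to 0$ --- strictly weaker, since $b_n\to\infty$ and $2/\alpha-1>0$. So the term-by-term matching you defer as ``routine bookkeeping'' is precisely where the argument fails to close: along your route you must either impose the conditions with exponent $4/\alpha-2$ on $b_n$, or establish a genuine $O_P(r_n)$ bound for the $L^1$-quantity $\int_{-a_n}^{a_n}\abs{\Delta_n I^s_{n,X}-\hat g^2}\,d\lambda$ by reworking Step~3 of the proof of Theorem~\ref{T:main} (avoiding the Cauchy--Schwarz loss), which is a substantive piece of work, not bookkeeping. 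Relatedly, (B3) is never actually used in your argument, another sign that the accounting has not been carried out.
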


Introduce a plug--in estimator
$
\widetilde{\| f \|}_2=\widetilde{\sigma}_{X(0)}/\widetilde{\|g \|}_\alpha
$
of $\| f \|_2$ where $\widetilde{\sigma}_{X(0)}$ is a scale estimator of $X(0)$ (e.g., $\widetilde{\sigma}_{q}$)  and $\widetilde{\|g \|}_\alpha$ is any of the estimators $\widetilde{\norm{g}}_{\alpha,T}$ and $\widetilde{\norm{g}}_{\alpha,b_n}$ corresponding to the case of compact or non--compact support of $f$. Moreover, estimate $f$ by $\tilde f:=\tilde g/\widetilde{\| f \|}_2$. 

\begin{corollary}\label{T:norm} Let $\widetilde{\sigma}_{X(0)}$  be any weakly consistent estimator of scale of $X(0)$.
 Under the assumptions of Theorems \ref{T:main} and \ref{T:est_norm_alpha} for compact--supported $f$ (or Theorems \ref{T:main_unboundedSupport} and \ref{T:est_norm_alpha}, otherwise)  it holds
$$ \widetilde{\| f \|_2}\overset{P}{\longrightarrow} \| f \|_2,\quad \niy,$$
and
$$ \| \tilde f - f \|_2\overset{P}{\longrightarrow} 0,\quad \niy. $$
\end{corollary}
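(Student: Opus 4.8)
The plan is to combine the two ingredients that have already been established: the weak consistency of the scale estimator $\widetilde\sigma_{X(0)}$ (assumed in the statement) and the weak consistency of $\widetilde{\|g\|}_\alpha$ (Theorem \ref{T:est_norm_alpha}), together with the weak consistency of $\tilde g$ as an $L^2$-estimator of $g$ (Theorem \ref{T:main}(ii), resp.\ Theorem \ref{T:main_unboundedSupport}). Recall the decomposition $\tilde f = \widetilde{\|f\|}_2\cdot\tilde g$ and $\widetilde{\|f\|}_2 = \widetilde\sigma_{X(0)}/\widetilde{\|g\|}_\alpha$, while on the deterministic side $\|f\|_2 = \sigma_{X(0)}/\|g\|_\alpha$ by the relation $\|f\|_2 = \|f\|_\alpha/\|g\|_\alpha$ and $\|f\|_\alpha = \sigma_{X(0)}$. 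Since the corollary only asserts convergence of the scalar $\widetilde{\|f\|}_2$ to $\|f\|_2$ (as the notation $\widetilde{\|f\|_2}$ suggests — this is the estimator of the $L^2$-norm, not of $f$ itself), it suffices to prove $\widetilde\sigma_{X(0)}/\widetilde{\|g\|}_\alpha \overset{P}{\to} \sigma_{X(0)}/\|g\|_\alpha$.

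First I would invoke Theorem \ref{T:est_norm_alpha} to get $\widetilde{\|g\|}_\alpha\overset{P}{\to}\|g\|_\alpha$ (using $\widetilde{\|g\|}_{\alpha,T}$ in the compact case under the assumptions of Theorem \ref{T:main}, or $\widetilde{\|g\|}_{\alpha,b_n}$ in the non-compact case under (B1)--(B7)). Then, since $g = f/\|f\|_2$ is not identically zero and $g\in L^\alpha$, the limit $\|g\|_\alpha$ is a strictly positive finite constant; in fact $\|g\|_\alpha = \|f\|_\alpha/\|f\|_2$. Combined with the hypothesis $\widetilde\sigma_{X(0)}\overset{P}{\to}\sigma_{X(0)}$, the continuous mapping theorem applied to the map $(x,y)\mapsto x/y$, which is continuous on $\R\times(0,\infty)$ at the point $(\sigma_{X(0)},\|g\|_\alpha)$, yields
\[
\widetilde{\|f\|}_2 = \frac{\widetilde\sigma_{X(0)}}{\widetilde{\|g\|}_\alpha} \overset{P}{\longrightarrow} \frac{\sigma_{X(0)}}{\|g\|_\alpha} = \frac{\|f\|_\alpha}{\|g\|_\alpha} = \|f\|_2,
\]
which is the claim.

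If instead one wants the stronger statement that $\tilde f = \widetilde{\|f\|}_2\cdot\tilde g \overset{P}{\to} f$ in $L^2$, one would additionally write $\|\tilde f - f\|_2 \le |\widetilde{\|f\|}_2|\cdot\|\tilde g - g\|_2 + |\widetilde{\|f\|}_2 - \|f\|_2|\cdot\|g\|_2$ and control the first term using $\|\tilde g - g\|_2 = o_P(1)$ from Theorem \ref{T:main}(ii) together with the fact that $\widetilde{\|f\|}_2 = O_P(1)$ (being convergent in probability), and the second term using $\|g\|_2 < \infty$ and the convergence just proved. The only place where anything could go wrong is the positivity of the denominator: one must ensure $\|g\|_\alpha>0$, which is immediate since $f\not\equiv 0$ (otherwise $\|f\|_2$ is not defined), and that $\widetilde{\|g\|}_\alpha$ does not vanish with non-negligible probability — but this too is automatic from $\widetilde{\|g\|}_\alpha\overset{P}{\to}\|g\|_\alpha>0$, as $\pr(\widetilde{\|g\|}_\alpha < \|g\|_\alpha/2)\to 0$. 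So there is no serious obstacle here; the corollary is a routine consequence of the preceding theorems via Slutsky's lemma / the continuous mapping theorem.
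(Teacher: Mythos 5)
Your argument is correct and is exactly the intended one: the paper states this corollary without an explicit proof, treating it as an immediate consequence of the identity $\| f \|_2=\| f \|_\alpha/\|g \|_\alpha=\sigma_{X(0)}/\|g\|_\alpha$, the assumed consistency of $\widetilde{\sigma}_{X(0)}$, Theorem \ref{T:est_norm_alpha}, and Slutsky's lemma, with $\|g\|_\alpha>0$ guaranteeing the quotient map is continuous at the limit. You also correctly read $\widetilde{\| f \|_2}$ as the scalar norm estimator rather than the function estimator, so nothing is missing.
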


\begin{remark}\label{Rem:generald} 
The above results stay true  
also for the case of estimation of the kernel function 
$f: \R^d\to\R$ of a stationary random field
$
X(t) = \int_{\R^d} f(t-s) \Lambda(ds)$, $t\in\R^d,
$
where $\Lambda$ is a homogeneous $S\al S$ independently scattered random measure on $\R^d$. 
 Let $(\Delta_n)_{n\in\mathbb{N}}$, $(m_n)_{n\in\mathbb{N}}$ and $(a_n)_{n\in\mathbb{N}}$ be real-valued sequences with $\Delta_n\to 0$, $n\Delta_n \to \infty$, $m_n\to \infty$ and $m_n=o(n)$ as $n\to\infty$. Let $\{W_n(m) \mid n\in\mathbb{N}, \, m\in \{-m_n, \dots, m_n\}^d\}$ be a sequence of filters. Denote by $\|\cdot \| $ the Euclidean norm in $\R^d$.  Additionally to (A\ref{item:ai}) and (W\ref{item:Wpos}) above,  assume that the following regularity conditions are fulfilled:
\begin{enumerate}
\item[{(W}2)] $\sum_{m\in\{-m_n,\dots, m_n\}^d} W_n(m) =1;$
\item[{(W}3)] $W_n^* := \max_{m\in \{ -m_n, \dots, m_n \}^d } W_n(m)\to 0,$ $\niy;$
\item[{(W}4)] $W_n^{(2)} := \sum_{m\in \{ -m_n, \dots, m_n \}^d } W_n(m)\|m\|^2 =o\big((n\Delta_n)^2\big),$ $\niy;$
\end{enumerate}
\begin{enumerate}
	\item[{(A}2)] $a_n^{2d}W_n^*\to \infty,$ $\niy;$
	\item[{(A}3)] $a_n^{3d/4} = o((n\Delta_n)^{1/\alpha}),$ $\niy;$
	\item[{(A}4)] $a_n^{d+1}\Delta_n\to 0,$ $\niy;$
	\item[{(A}5)]  $a_n^{2d}W_n^{(2)} = o\big((n\Delta_n)^2\big),$ $\niy;$
\end{enumerate}
Moreover, assume that the function $f$ satisfies (F\ref{i:F_posdef}) and that it either has compact support and fulfills
\begin{enumerate}[{(F}1)]\setcounter{enumi}{1}
	\item $a_n^d\omega_f(\Delta_n)\to 0$, $\niy,$
\end{enumerate}
where $\omega_f(\Delta_n) = \sup_{\|t-s\|<\Delta_n}\abs{f(t)-f(s)}$ is the modulus of continuity of $f$, or that there is some $a>\max\{d+1,d/\alpha\}$ such that $f$ fulfills
\begin{enumerate}[{(F}1{$'$)}]\setcounter{enumi}{1}
	\item $a_n^d\omega_f(\Delta_n)^{\frac{1}{d}-\frac{1}{a}}\to 0$, $\niy$;
	\item $f(t)= O\left(\|t\|^{-a}\right)$, $\| t\| \to \infty$; 
	\item $a_n^{3d/4} = o( \omega_f(\Delta_n)^{{1}/{(a\alpha)}} (n\Delta_n)^{1/\alpha} )$, $\niy$.
\end{enumerate}

Put 
\[  I_{n,X}(\la) = \frac{\abs{\sum_{j\in\{1,\dots, n\}^d} X(t_{j,n})e^{i\langle t_{j,n},\la\rangle}}^2}{\sum_{j\in\{1,\dots, n\}^d}X(t_{j,n})^2},  \ \la\in\mathbb{R}^d,\]
where $t_{j,n}=(j_1\Delta_n, \dots, j_d\Delta_n)$ for $j=(j_1,\dots, j_d)\in\mathbb{R}^d$ and $n\in\mathbb{N}$, and
\[ I_{n,X}^s (\la) = \sum_{m\in\{-m_n,\dots, m_n\}^d} W_n(m) I_{n,X}\big(\lambda+\frac{m}{n\Delta_n}\big). \]
Then for the estimator
\[ \tilde g(t):=\frac{1}{(2\pi)^d}\int_{[-a_n,a_n]^d} \sqrt{\Delta_n I^s_{n,X}(\lambda)} e^{i\langle t,\lambda\rangle}\, d\lambda , \quad t\in\R^d,\]
the assertions of Theorem \ref{T:main} and Theorem \ref{T:main_unboundedSupport} hold.
\end{remark}

\section{Simulation study}\label{sect:Sim}
\setcounter{figure}{0}
In this section, we study the performance and the applicability range of the above estimation method empirically, i.e., by estimating $f$ from each of $M=100$ Monte Carlo simulations of the trajectories of $X$. Before that, dwell on the particular choice of the weights $W_n$ and sequences $\{\Delta_n  \}$, $\{m_n  \}$,  and $\{a_n  \}$.

Assumptions (W\ref{item:Wpos})--(W\ref{item:W2m}) and (A\ref{item:ai})--(A\ref{item:aWnD})  are evidently satisfied e.g.\ for
\begin{itemize}
\item uniform weights $W_n(m)=\frac{1}{2m_n+1}$,
\item $\Delta_n=n^{-\delta}$, $\delta\in (0,1)$, 
\item $m_n=n^{\gamma}$, $\gamma\in(0, 1-\delta)$,
\item $a_n=\log n$.
\end{itemize}
Assumptions (F\ref{i:F_posdef})--(F\ref{As:unif_cont}) hold for all positive semidefinite compact supported Lipschitz continuous kernels $f$. For all Lipschitz continuous functions (F\ref{i:F_awp}$'$) holds.
Assumption (F\ref{i:F_asy}$'$) is valid whenever $f$ decays at infinity rapidly enough, e.g., for $f(t)=e^{-|t|}$,
while (F\ref{i:F_awnD}$'$) holds for all non-constant functions $f$ provided $\delta < \frac{a}{a+1}$, since then $\omega_f(\Delta) \ge c\cdot \Delta$ for an appropriate constant $c>0$ and sufficiently small $\Delta>0$.

Now let us study the behavior of our estimator at finite sample size. To simulate the realizations of $X$, we used the algorithms given in \cite{KarcherSchefflerSpo13}. In the case $d=1$ we considered a time series in the observation window $[-20,20]$ at grid size $\Delta=0.01$; hence $n=4000$. We simulated fields for $\alpha=0.3, 0.7$ and $1.7$ and as kernels we used the triangular, the spherical and the exponential kernels 

\begin{equation}\label{eq:triang}
f(t) = \sqrt{3/2} (1-|t|) \mathbf{1}_{[-1,1]}(t),
\end{equation}
\begin{equation}\label{eq:spheric}
f(t) = 4 \cdot (1-1.5|t|+0.5|t|^3) \mathbf{1}_{[-1,1]}(t),
\end{equation}
\begin{equation}\label{eq:exp}
f(t) = 2.5 \exp (-|t|)
\end{equation}
 chosen such that  $\|  f\|_2=1$. These kernels $f$ satisfy conditions  (F\ref{i:F_posdef})--(F\ref{As:unif_cont}) and (F\ref{i:F_posdef}), (F\ref{i:F_awp}$'$)--(F\ref{i:F_awnD}$'$), respectively. Indeed, assumption (F\ref{i:F_posdef}) holds since all these functions are valid covariance functions which are positive semidefinite. One can check that their Fourier transforms are non--negative also directly, compare \cite[Table 4, p. 245]{Lantu02}. (F\ref{As:unif_cont}) and (F\ref{i:F_awp}$'$) follow from Lipschitz continuity of the functions  \eqref{eq:triang}--\eqref{eq:exp}.  

As parameters for the estimator we chose $m_n=n^{1/4}$, uniform weights $W_n(m)=1/(2m_n+1)$ and $a_n=20$. 
\subsection{S$\alpha$S case, $0<\alpha<2$}
In what follows, we apply our estimation method to S$\alpha$S moving averages.
The results are shown in Figure \ref{fig:SaSa1.7ftri}. Each plot contains the graph of the real kernel function $f$ used to simulate $X$, the mean of $100$ estimates of $f$ and their $(0.025, 0.975)$--quantile envelope, i.e.\ the region containing $95\%$ of all estimated curves of $f$. We see that the results are quite good. 

\setcounter{figure}{0}
\begin{figure} 
\begin{center}
\includegraphics[scale=0.5]{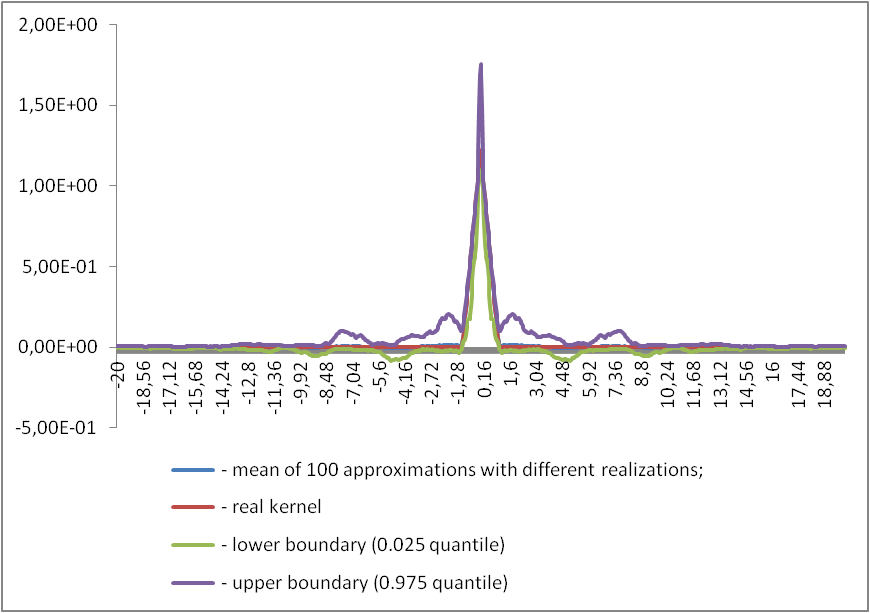} ˜ \includegraphics[scale=0.5]{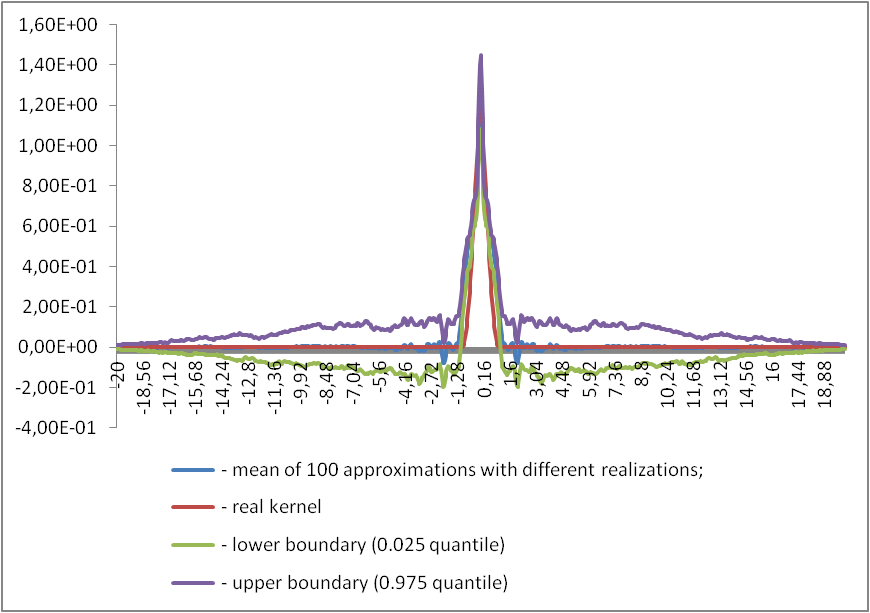} 
\includegraphics[scale=0.5]{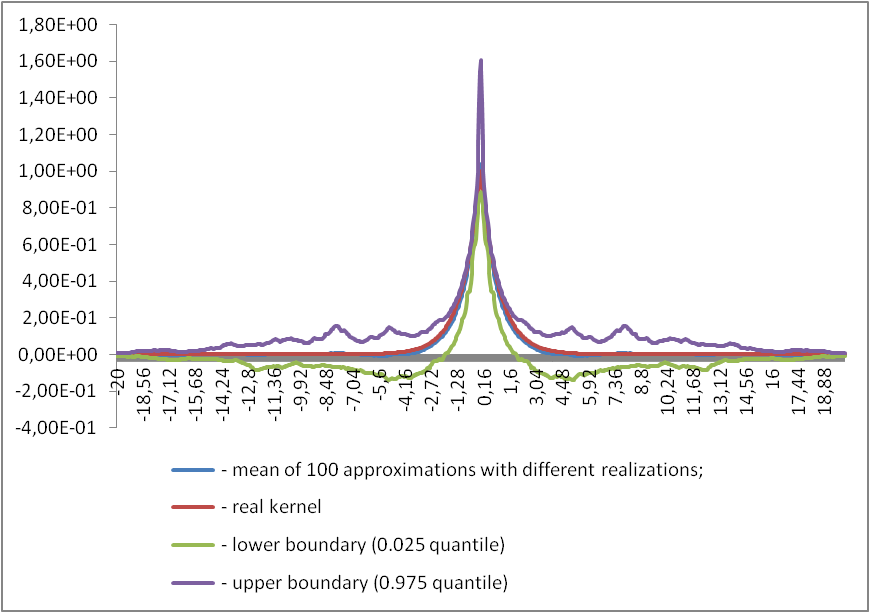}
\end{center}
\caption{Estimation results for $S\alpha S$ $X$ with triangular kernel \eqref{eq:triang}, $\alpha=0.3$ (top left), with spherical kernel \eqref{eq:spheric}, $\alpha=1.7$ (top right) and with exponential kernel \eqref{eq:exp}, $\alpha=0.7$ (bottom row) }\label{fig:SaSa1.7ftri}
\end{figure}

In  Figure  \ref{fig:SaSa1.7ftri} we concentrated on the estimation of function $g=f$ (which is equivalent to setting $\|  f\|_2=1$). 
If the norm of $f$ is unknown, then it has to be estimated separately, e.g. via relation \eqref{est:EmpQuart}.  The same curves as in Figure \ref{fig:SaSa1.7ftri} are shown for the estimates of $f$ in Figure \ref{fig:SaSunknown_norm} for $\alpha=0.7$ and $\alpha=1.7$.  Not surprisingly,  the empirical standard deviation is much higher than for known norm and the performance of the estimators of the norm $\widetilde{\|  f\|}_2$ gets better with increasing $\alpha$. This is the reason why the empirical mean of $M$ estimated values of $f$ in Figure \ref{fig:SaSunknown_norm} (left) for $\alpha=0.7$ is substituted by the empirical median which is robust to outliers.

Numerical experiments with different sampling mesh values $\Delta_n$  show that the estimation of $f$ performs well for $\Delta_n\in(0, 0.1]$ (high frequency framework).

\begin{figure}
\begin{center}
\includegraphics[scale=0.5]{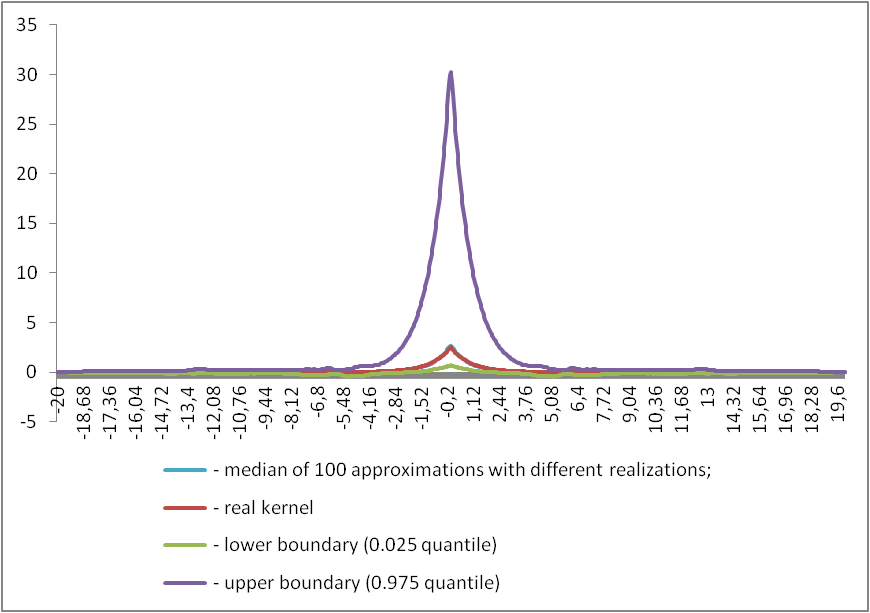} ˜ \includegraphics[scale=0.5]{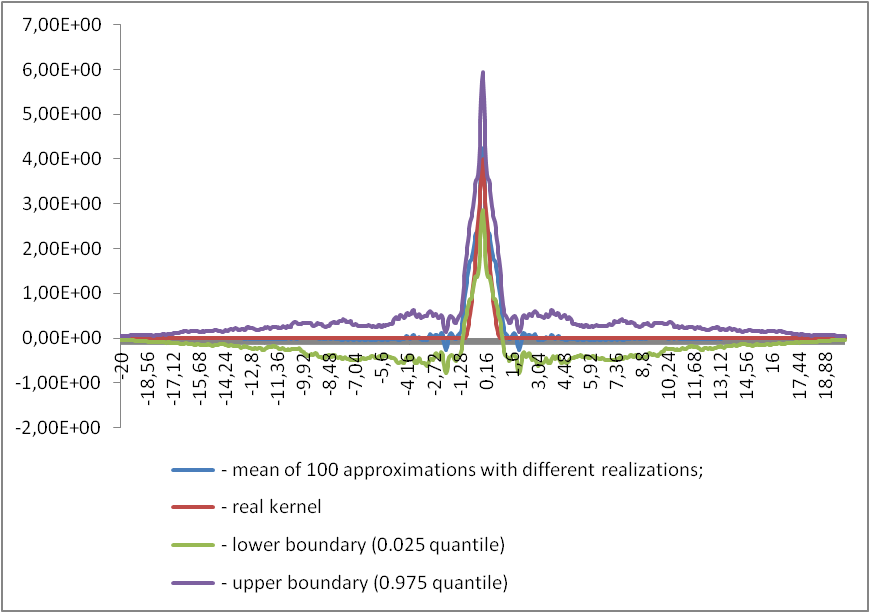} 
\end{center}
\caption{Estimation results for $S\alpha S$  $X$ with unknown norm of $f$. Here  $\alpha=0.7$  and  $f$ is  an exponential kernel \eqref{eq:exp}  (left) and $\alpha=1.7$  and  $f$ is  a spherical kernel \eqref{eq:spheric}  (right)    }\label{fig:SaSunknown_norm}
\end{figure}

In order to evaluate the performance of the estimator when $d=2$, we examined a (symmetric) field with $\alpha=1.8$ and kernel 
\begin{equation}\label{eq:Gauss}
f(t) = \frac{1}{2\pi}e^{-\|t\|_2^2/2},
\end{equation}
 on a grid with $n=1000$ points in each dimension and grid distance $\Delta_n=T^2/(2n)\approx 0.0024$, where $T=2.2$. For computational reasons the kernel was restricted to $t\in[-T,T]^2$. 
 As parameters for the estimator we used uniform weights $W_n(m)=1/(2m_n+1)^2$, $m_n=\lfloor \sqrt[8]{ n}  \rfloor=2$ and  $a_n=\log(n)-4.5 \approx 2.4$. Since the computation time is much higher than in the one-dimensional case, we simulated just one realization of the estimator.
 Figure \ref{fig:2d} (bottom row) shows that our estimation method (with the appropriately chosen parameters) performs also well in two dimensions. 
\begin{figure}
\begin{center}
\includegraphics[scale=0.4]{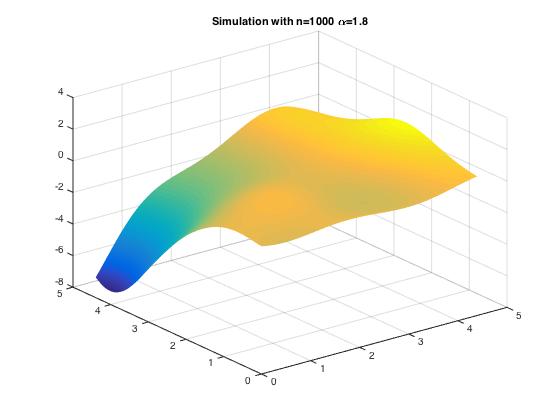}
\includegraphics[scale=0.4]{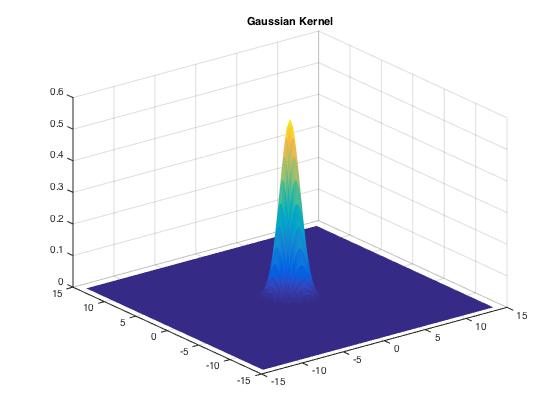}

\includegraphics[scale=0.4]{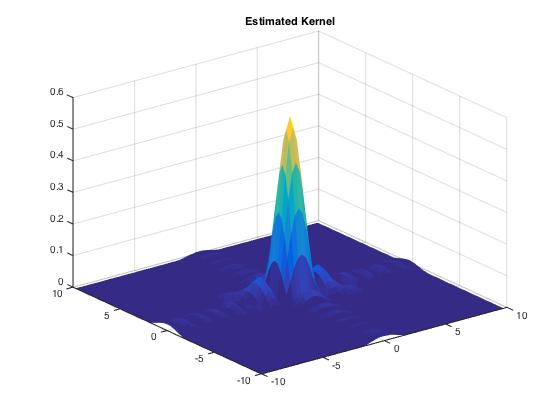}
\includegraphics[scale=0.4]{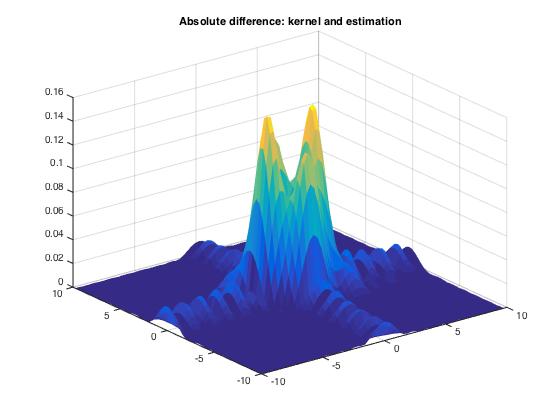}
\end{center}
\caption{A simulated realization (top left) of $S\alpha S$ random field $X$ ($d=2$) with Gaussian kernel \eqref{eq:Gauss} (top right), a realization of the kernel estimator (bottom left) and its difference to the real kernel (bottom right)}\label{fig:2d}
\end{figure}

\subsection{Beyond the S$\alpha$S case: Gaussianity, skewness and general infinite divisibility}

As shown above, our estimator works well in all cases in which its consistency was proven  in Section \ref{sect.Main}. An interesting question is whether it also performs well beyond these cases. Indeed, it does work well for Gaussian ($\alpha=2$, cf. Figure \ref{fig:SaSa=1.7fexp}) and skewed random measures $\Lambda$ with stability index $\alpha=1.3$ and skewness intensity $\beta=0.7, -0.5$, cf.\ Figure  \ref{fig:Xbeta0.7}. The parameters of the Gaussian measure $\Lambda$ were chosen such that $\Lambda(B)\sim N(0,|B|)$ for a bounded Borel subset $B$.

\begin{figure}
\begin{center}
\includegraphics[scale=0.5]{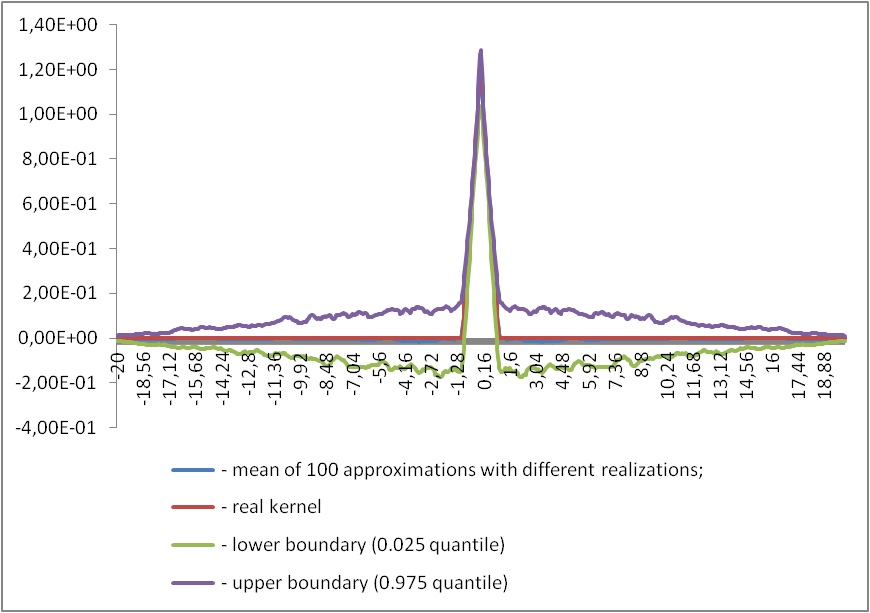} 
\end{center}
\caption{Estimation results for $S\alpha S$ $X$ and with triangular kernel \eqref{eq:triang}, $\alpha=2$ (Gaussian case)}\label{fig:SaSa=1.7fexp}
\end{figure}

\begin{figure}
\begin{center}
\includegraphics[scale=0.5]{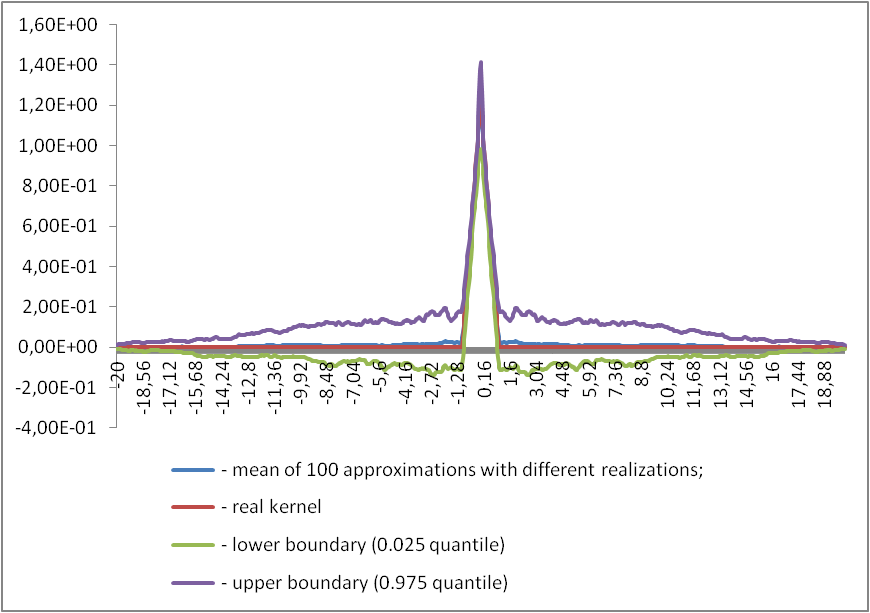} ˜ \includegraphics[scale=0.5]{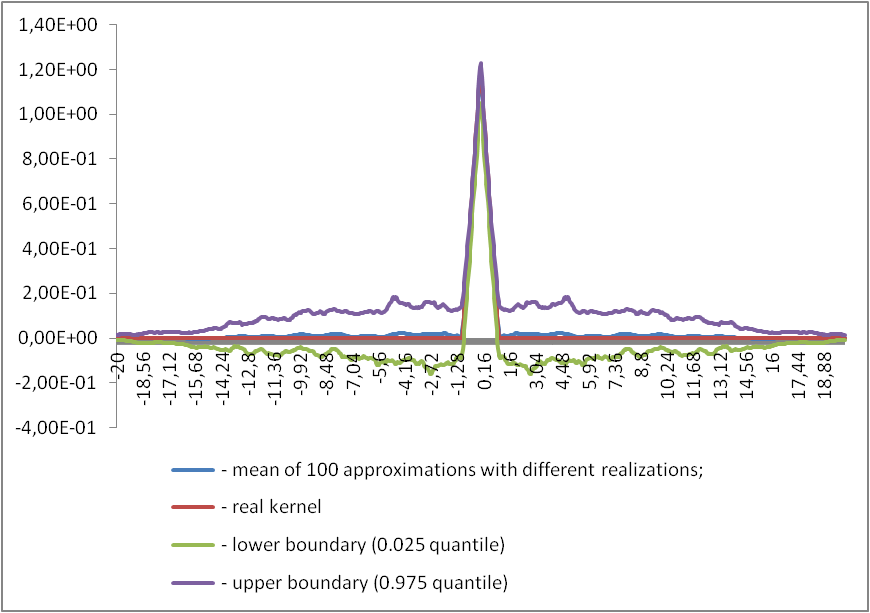} 
\end{center}
\caption{Estimation results for skewed $X$  with triangular kernel \eqref{eq:triang}, $\alpha=1.3$ and $\beta=0.7$ (left), $\beta=-0.5$ (right)}\label{fig:Xbeta0.7}
\end{figure}

Finally we would like
 to evaluate the performance of our estimator when the integrator $\Lambda$ is not stable. Since it has to be infinitely divisible, one canonical choice is here of course the Gamma distribution, but we would also like to have a distribution without finite second moment.  For this we choose $\Lambda$ with L\'evy density
\begin{equation}\label{eq:Levy_meas}
h(x) = \left\{   \begin{array}{ll}
c_1 \frac{|\log x|}{|x|^{p_1}}, & x>\varepsilon, \\ 
c_2 \frac{|\log (-x)|}{|x|^{p_2}}, & x<-\varepsilon, \\
0, & |x|\le \varepsilon 
\end{array}
\right.
\end{equation}
for some $\varepsilon \ge 0$, $c_1, c_2>0$, $p_1,p_2 >0$. 
In more detail, we choose $\Lambda$ such that for any bounded Borel set $B\subset \R$ we have $\Lambda(B)=\xi(|B|)$ in distribution where $|B|$ is the Lebesgue measure of $B$ and $\xi=\{ \xi(t) , \; t\ge 0\}$ is the L\'evy process given by
\begin{equation} \label{eq:LevyProc}
\xi(t)=\int\limits_0^t \int\limits_{\R} x Q(dx,ds) - t \int\limits_{|x|<1} x h(x)\, dx, \quad t\ge 0,
\end{equation}
cf. \cite[Theorem 19.2]{Sato13}. Here $Q$ is a random Poisson measure on $\R_+\times \R$ with intensity measure $\nu(A,B)=|A| \int_B h(x)\, dx$ for any bounded Borel subset $A\times B \subset \R_+\times \R$. If  $p_1,p_2\in (0,3)$ then $\Lambda$ is not square integrable, cf. \cite[Corollary 25.8]{Sato13}. $\Lambda$ is symmetric iff $h$ is symmetric, i.e., $c_1=c_2$ and $p_1=p_2$, cf. \cite[Exercise 18.1]{Sato13}. It is known that the distribution of $\Lambda$ is completely determined by the law of $\xi(1)$. Since the 
L\'evy--Ito representation \eqref{eq:LevyProc} can be used to generate $\xi(1)$, \cite{KarcherSchefflerSpo13} can be used to simulate $\Lambda$. We chose $\varepsilon$ to be positive in order to avoid extremely high jumps.

In the case of $\Gamma$--distributed $\Lambda$, we set $\Lambda(B)\sim \Gamma(1,|B|)$ for any bounded Borel subset $B$ where a random variable $Y\sim \Gamma(\lambda, p)$ has the density 
$$p(x)=\frac{\lambda^p x^{p-1}}{\Gamma(p)} e^{-\lambda x} {\bf 1}_{\{x\ge 0\}}.$$
 Numerical experiments with non--stable infinitely divisible integrators $\Lambda$ show that symmetry is an important assumption that can not be omitted there. Indeed, we saw that the estimation method for $f$ does not work for Gamma-distributed or other unsymmetric non-stable integrators (cf.\ Figure \ref{fig:XLevyNonsym}) but it works well for symmetric infinitely divisible measures $\Lambda$ with or without a finite second moment, compare Figure  \ref{fig:Levy_sym_fexp}.

\begin{figure}
\begin{center}
\includegraphics[scale=0.5]{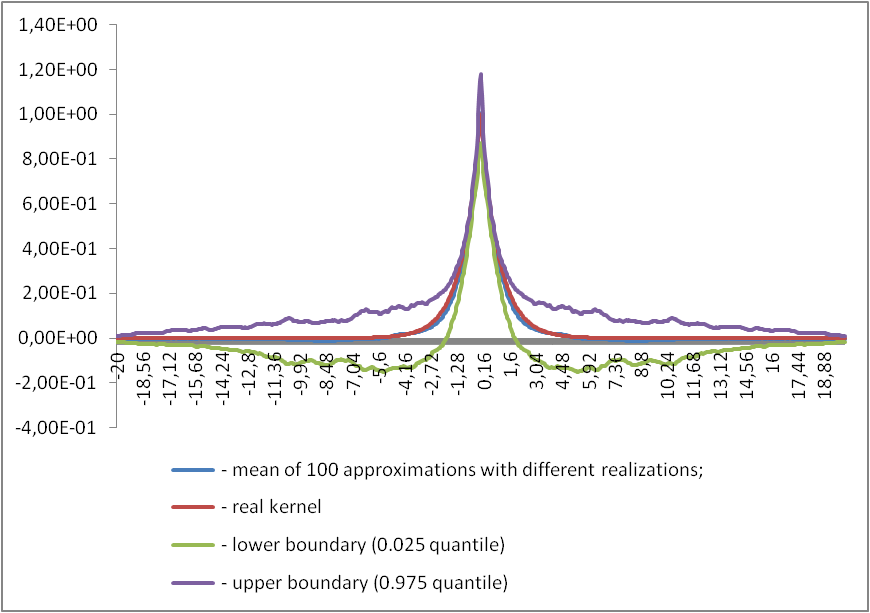} ˜ \includegraphics[scale=0.5]{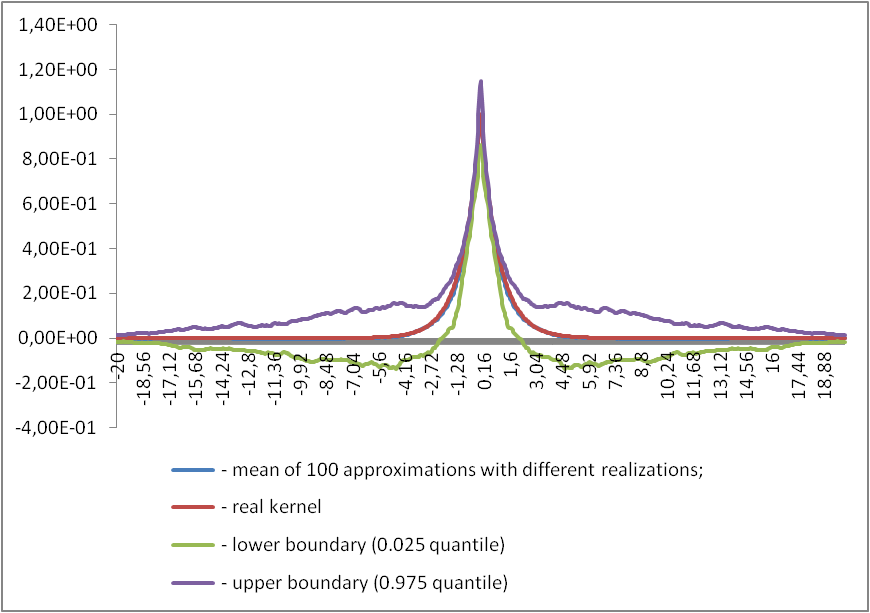} 
\end{center}
\caption{Estimation results for $X$ with infinitely divisible  $\Lambda$  and  exponential kernel \eqref{eq:exp}. Parameters of L\'evy density \eqref{eq:Levy_meas} are  $c_1=c_2=1$,  $p_1=p_2=2.5$ (left) and $p_1=p_2=4$ (right) }\label{fig:Levy_sym_fexp}
\end{figure}

\begin{figure}
\begin{center}
\includegraphics[scale=0.5]{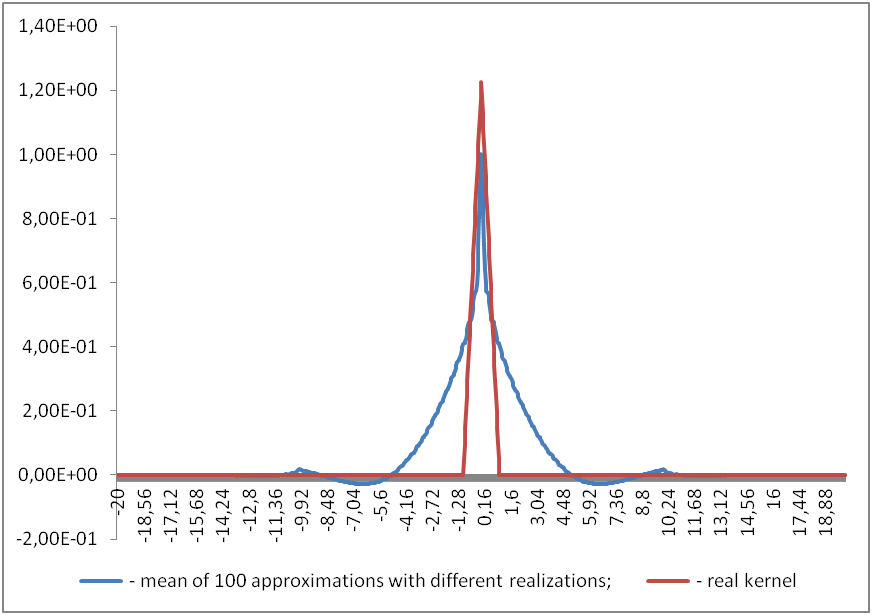} ˜ \includegraphics[scale=0.5]{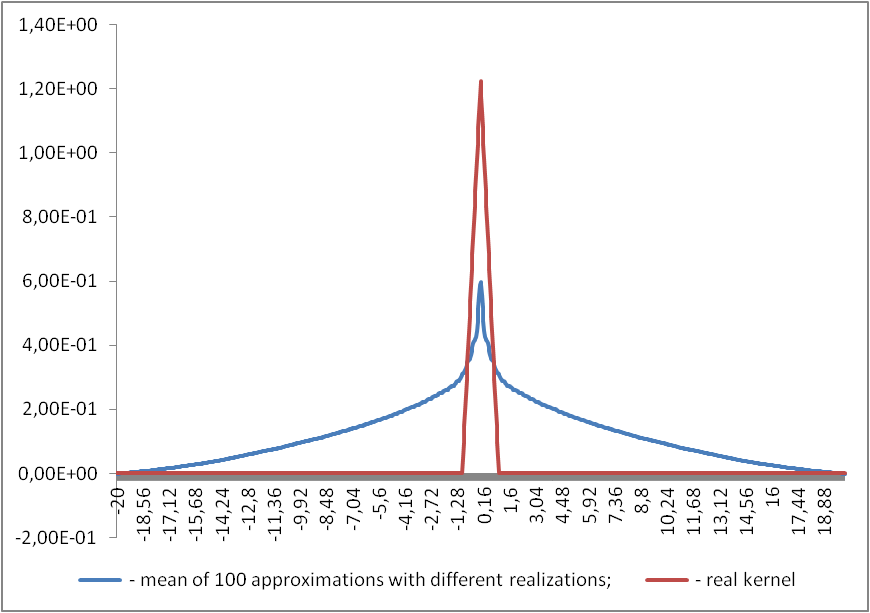} 
\end{center}
\caption{Estimation results for $X$ with  triangular kernel \eqref{eq:triang}, Gamma-distributed $\Lambda$  (left) and skewed infinitely divisible  $\Lambda$ (right). Parameters of L\'evy density \eqref{eq:Levy_meas} are $p_1=2.1$, $p_2=2.7$ and $c_1=c_2=1$}\label{fig:XLevyNonsym}
\end{figure}

\section{Summary and open problems}\label{sect:Sum}

The preceding section showed the good performance of the high frequency estimates of a smooth symmetric bounded rapidly decreasing kernel $f$ of positive type for $\alpha$--stable moving averages $X$ (both skewed and symmetric)  in the case $\alpha\in(0,2]$. Additionally, we verified empirically the applicability of the method to certain non--stable symmetric infinitely divisible integrators $\Lambda$. An open problem is to provide rigorous mathematical proofs for this experimental evidence. Recall that we were able to show the consistency of our estimation methods only in the $S\alpha S$ case. Our working hypothesis is that the results of Theorems \ref{T:main} and \ref{T:main_unboundedSupport} stay true for all stable integrators $\Lambda$ as well as for symmetric infinitely divisible $\Lambda$ without a finite second moment (at least lying in the domain of attraction of a stable law). 

Another open problem is to prove limit theorems for the estimates of $g$ and $f$ in case of  $S\alpha S$ $\Lambda$. If $f$ is not  symmetric (e.g., it is causal) our estimation ansatz fails to work completely, so new ideas are needed here. This is the subject of future research.


\section*{Acknowledgements}
We thank I.~Liflyand and V.P.~Zastavnyi for the discussion on positive definite functions and their Fourier transforms. We are also grateful to our students L. Palianytsia, O. Stelmakh and B. Str\"oh for doing numerical experiments in Section \ref{sect:Sim}. Finally, we express our gratitude to M. Wendler for drawing our attention to paper \cite{Hesse90}.

\section*{Appendix A: Proofs}\label{sect.Proofs}

\subsection*{Kernel $f$ with compact support}\label{sect:Proof-subsect:compactsupp}

\begin{proof}[Proof of Theorem \ref{T:main}]
We first show how (ii) follows from (i). Notice that $\abs{\hat g(\lambda)} = \hat g(\lambda)$ for all $\lambda\in\mathbb{R}$, since by (F\ref{i:F_posdef}) $f$ is of positive type. 
In order to prove
\begin{equation*}
\int_{-a_n}^{a_n}\Big(\sqrt{\Delta_n I^s_{n,X}(\lambda)} - \hat{g}(\lambda)\Big)^2d\lambda \overset{P}{\longrightarrow} 0,\quad \niy,
\end{equation*}
we use the inequality $|\sqrt{a}-\sqrt{b}| \le \sqrt{ |a-b|}$ for $a, b\ge 0$.
We get
\begin{align*}
\int_{-a_n}^{a_n} \Big(\sqrt{\Delta_n I^s_{n,X}(\lambda)} - \hat{g}(\lambda) \Big)^2d\lambda 
&\le \int_{-a_n}^{a_n} \abs{\Delta_n I^s_{n,X}(\lambda) - \hat{g}(\lambda)^2 } d\lambda\\
& \le \sqrt{2a_n} \cdot \sqrt{\int_{-a_n}^{a_n} \abs{\Delta_n I^s_{n,X}(\lambda) - \hat{g}(\lambda)^2 }^2 \, d\lambda} \overset{P}{\longrightarrow} 0
\end{align*}
by (i), where the last inequality is due to Cauchy--Schwarz. 

Since $\hat{g}\in L^2(\R)$ it follows
\[ \int_{\set{\lambda:\abs{\lambda}>a_n}} \hat g(\lambda)^2 \, d\lambda \to 0, \quad \niy,\]
so we get
\[ \int_{\mathbb{R}} \Big(\mathbf{1}_{[-a_n,a_n]}(\lambda) \cdot\sqrt{\Delta_n I^s_{n,X}(\lambda)} - \hat{g}(\lambda)\Big)^2\, d\lambda \overset{P}{\longrightarrow} 0,\quad \niy.\]
Now Plancherel's equality yields
\[ \int_{\mathbb{R}} \bigg|\frac{1}{2\pi} \int_{[-a_n,a_n]} \sqrt{\Delta_n I^s_{n,X}(\lambda)} e^{it\lambda}\, d\lambda - {g}(t)\bigg|^2\, dt \overset{P}{\longrightarrow} 0,\quad \niy,\]
which is equivalent to the statement. 

Now let us prove (i). Write
$$
I^s_{n,X}(\lambda) = \frac{J_{n,X}^s(\lambda)}{S_{n,X}},
$$
where 
$
J_{n,X}^s(\la) = \sum_{\abs{m}\le m_n} W_n(m)\abs{\sum_{j=1}^{n} X(t_{j,n})e^{it_{j,n}\nu_n(m,\lambda)}}^2,\quad 
S_{n,X} = {\sum_{j=1}^{n}X(t_{j,n})^2}.
$

Let $f$ be supported by $[-T,T]$. We will assume that $N = T/\Delta_n$ is integer: this will simplify the exposition while not harming the rigor. The proof is rather long, so we split it into several steps for better readability. Choose $n\ge 2N+1$.

\underline{Step 1. Denominator}. We start with investigating the denominator $S_{n,X} $. First we study the behavior of a similar expression with $f$ replaced by its discretized version. Specifically, define 
$$
X_{n}(t_{j,n}) = \sum_{k=-N}^{N-1} f(t_{k,n}) \Lambda\big(((j-k-1)\Delta_n,(j-k)\Delta_n]\big) = \int_\R f_n(t_{j,n} - s) \Lambda(ds),\ j=1,\dots,n,
$$
where $f_n(x) = \sum_{k=-N}^{N-1}f(t_{k,n})\ind{[t_{k,n},t_{k+1,n})}(x)$. Denote $\eps_{l,n} = \Lambda\big(((l-1)\Delta_n,l\Delta_n]\big)$, $l\in\mathbb Z$.  For fixed $n$, these variables are independent $S\al S$ with scale parameter $\Delta_n^{1/\alpha}$.

Decompose
\begin{gather*}
\sum_{j=1}^n X_{n}(t_{j,n})^2 = \sum_{j=1}^n \left(\sum_{l=j-N+1}^{j+N}f(t_{j-l,n})\eps_{l,n} \right)^2 \\
= \sum_{j=1}^n \sum_{l=j-N+1}^{j+N} f(t_{j-l,n})^2\eps_{l,n}^2 + \sum_{j=1}^n \sum_{\substack{l_1,l_2=j-N+1\\l_1\neq l_2}}^{j+N}f(t_{j-l_1,n})f(t_{j-l_2,n})\eps_{l_1,n}\eps_{l_2,n}\\
= \left(\sum_{l=N+1}^{n-N}\sum_{j=l-N}^{l+N-1}  +
\sum_{l=2-N}^{N}\sum_{j=1}^{l+N-1} + \sum_{l=n-N+1}^{n+N}\sum_{j=l-N}^{n}\right) f(t_{j-l,n})^2\eps_{l,n}^2\\
+ \sum_{j=1}^n \sum_{\substack{l_1,l_2=j-N+1\\l_1\neq l_2}}^{j+N}f(t_{j-l_1,n})f(t_{j-l_2,n})\eps_{l_1,n}\eps_{l_2,n}=: S_{1,n} + S_{2,n} + S_{3,n} + S_{4,n}.
\end{gather*}
We are going to show that the last three terms are negligible. We use the shorthand $E_n = \sum_{l=N+1}^{n-N} \eps_{l,n}^2$, as this will be our benchmark term. Observe that $S_{1,n} = \sum_{k=-N}^{N-1} f(t_{k,n})^2 E_n$. Thanks to the boundedness and uniform continuity of $f$ we have
\[ \abs{\Delta_n\sum_{k=-N}^{N-1} f(t_{k,n})^2 - \int_{-T}^T f(x)^2\, dx}= O(\omega_f(\Delta_n)) . \]
Thus
\begin{equation}\label{s1n-asymp}
\abs{S_{1,n} -  \frac{1}{\Delta_n} \int_{-T}^T f(x)^2 dx\cdot  E_n} = O\big(\Delta_n^{-1}\omega_f(\Delta_n) E_n \big),\quad \niy. 
\end{equation}

On the other hand, by \cite[XVII.5, Theorem 3 (i)]{Feller66}, we have
\begin{equation}\label{davis-resnick}
	\frac{E_n}{n^{2/\al}\Delta_n^{2/\al}}\Rightarrow Z_{\al},\quad \niy,
\end{equation}
where $Z_\al$ is some positive $\alpha/2$--stable random variable. Therefore, by Slutsky's theorem, 
\begin{equation}\label{s1n-weak}
\frac{S_{1,n}}{n^{2/\al}\Delta_n^{2/\al-1}}\Rightarrow  Z_{\al} \int_{-T}^T f(x)^2 dx,\quad \niy.
\end{equation}
Estimate 
$$
S_{2,n} + S_{3,n} \le  
\left(\sum_{l=2-N}^{N} + \sum_{l=n-N+1}^{n+N}\right)\eps_{l,n}^2\sum_{k=-N}^{N-1} f(t_{k,n})^2 =: S_{5,n}.
$$
Similarly to \eqref{s1n-weak}, 
\begin{equation}\label{s5n-weak}
\frac{S_{5,n}}{(2N)^{2/\al} \Delta_n^{2/\al-1}}\Rightarrow Z_\al^\prime \int_{-T}^T f(x)^2 dx,\quad \niy.
\end{equation}
Since $N\Delta_n=T$, we have  
$$S_{2,n} + S_{3,n} = O_P (\Delta_n^{-1}) = O_P\big( (n\Delta_n)^{-2/\alpha}  n^{2/\alpha}\Delta_n^{2/\alpha-1}\big) = O_P \big(S_{1,n}  (n\Delta_n)^{-2/\alpha} \big),$$ \niy.  
Now write $S_{4,n}$ as
$$
S_{4,n} = \sum_{\substack{l_1,l_2=2-N\\l_1\neq l_2}}^{n+N} b_{l_1,l_2,n} \eps_{l_1,n}\eps_{l_2,n},
$$
where  $\abs{b_{l_1,l_2,n}} \le 2N\norm{f}_\infty^2$ and $b_{l_1,l_2,n}=0$ whenever $\abs{l_1-l_2}\ge 2N$. Hence, $$
\sum_{\substack{l_1,l_2=2-N\\l_1\neq l_2}}^{n+N} \abs{b_{l_1,l_2,n}}^2\le 16 N^3 n\norm{f}_\infty^4,
$$
so Lemma~\ref{lemma-crossums} implies
$
S_{4,n} = O_P(N^{3/2} n^{2/\alpha-1/2}\Delta_n^{2/\alpha}) = O_P((n\Delta_n)^{-1/2}S_{1,n}) . 
$

Summing up, we have $\sum_{j=1}^{n} X_{n}(t_{j,n})^2 
= S_{1,n}\big(1+O_P((n\Delta_n)^{-1/2})\big) $, \niy, and $S_{1,n}$ is of order $n^{2/\alpha}\Delta_n^{2/\alpha-1}$, in the sense of \eqref{s1n-weak}. 

Now we get back to the denominator of $I_{n,X}(\lambda)$. For any positive vanishing sequence $\set{\delta_n,n\ge 1}$ write the following simple estimate: 
\begin{equation}\label{simpleineq}
\abs{a^2-b^2}\le 2\abs{a(a-b)}+ \abs{a-b}^2 \le \delta_n a^2 + (1+\delta_n^{-1})\abs{a-b}^2.
\end{equation}
Then we obtain
\begin{gather*}
\abs{\sum_{j=1}^{n} X_{n}(t_{j,n})^2 - S_{n,X} }\le 
\delta_n \sum_{j=1}^{n} X_{n}(t_{j,n})^2 + (1+\delta_n^{-1})\sum_{j=1}^{n} \left(X_{n}(t_{j,n})-X(t_{j,n})\right)^2.
\end{gather*} 
From Lemma~\ref{lem-smallsquares} it follows that 
$$
\sum_{j=1}^{n} \left(X_{n}(t_{j,n})-X(t_{j,n})\right)^2 = O_P(\norm{f_n-f}^2_\infty n^{2/\alpha} \Delta_n^{2/\alpha-1})= O_P(\omega_f(\Delta_n)^2 S_{1,n}),\quad \niy.
$$ 
By assumption (F\ref{As:unif_cont}), it holds $\omega_f(\Delta_n)\to 0$, \niy. Putting $\delta_n = \omega_f(\Delta_n)$, we get that 
$$
\abs{\sum_{j=1}^{n} X_{n}(t_{j,n})^2 -S_{n,X} } = O_P(\omega_f(\Delta_n) S_{1,n})=o_P(S_{1,n}),\quad \niy.
$$
We conclude that 
$$
S_{n,X}  = S_{1,n}\big(1+ O_P((n\Delta_n)^{-1/2}+ \omega_f(\Delta_n))\big) = S_{1,n}(1+ o_P(1)),\quad \niy,
$$
whence, using \eqref{s1n-asymp},
\begin{equation}\label{denominatorasymp}
	\Delta_n S_{n,X}  =  \left(\norm{f}_2^2 + O_P((n\Delta_n)^{-1/2} + \omega_f(\Delta_n))\right)E_n\widesim{P} \norm{f}_2^2 E_n ,\ \niy.
\end{equation}

\underline{Step 2. Whole expression}

We turn to the expression in the left-hand side of \eqref{L2-conv-in-prob}.  Recall that 
$$
J_{n,X}^s(\la) = \sum_{|m|\le m_n} W_n(m) \abs{\sum_{j=1}^{n} X(t_{j,n})e^{it_{j,n}\nu_n(m,\lambda)}}^2
$$
is the numerator of $I^s_{n,X}(\lambda)$ and write
\begin{gather*}
a_n\int\limits_{-a_n}^{a_n}\abs{\Delta_n I_{n,X}^s(\la) - \abs{\hat g(\lambda)}^2}^2d\lambda = 
a_n\int\limits_{-a_n}^{a_n}\abs{\frac{\Delta_n^2 J^s_{n,X}(\lambda)}{\Delta_nS_{n,X}} -  \frac{\abs{\hat f(\lambda)}^2}{\norm{f}_2^2}}^2d\lambda\\
\le
2a_n\int\limits_{-a_n}^{a_n}\abs{\frac{\Delta_n^2 J^s_{n,X}(\lambda)}{\Delta_n S_{n,X}} - \frac{\abs{\hat{f}(\la)}^2 E_n}{\Delta_n S_{n,X}}}^2d\lambda 
\\ 
+ 2a_n\int\limits_{-a_n}^{a_n}\abs{\frac{\abs{\hat{f}(\la)}^2 E_n}{\Delta_n S_{n,X}} -  \frac{\abs{\hat f(\lambda)}^2}{\norm{f}_2^2}}^2d\lambda
\\
= 2a_n \!  \! \left[  \int\limits_{-a_n}^{a_n} \abs{\frac{\Delta_n^2 J^s_{n,X}(\lambda) - \abs{\hat{f}(\la)}^2 E_n}{\Delta_n S_{n,X} }}^2 \! \! d\lambda 
+ \abs{\frac{ \norm{f}_2^2 E_n - \Delta_n S_{n,X}  }{\Delta_n\norm{f}_2^2  S_{n,X} }}^2  \!\! \int\limits_{-a_n}^{a_n} \abs{\hat{f}(\la)}^4 \! \! d\lambda \right] \! .
\end{gather*}
Thanks to \eqref{denominatorasymp}, 
$$
a_n\int\limits_{-a_n}^{a_n} \abs{\frac{\Delta_n^2 J^s_{n,X}(\lambda) - \abs{\hat{f}(\la)}^2 E_n}{\Delta_n S_{n,X}}}^2 d\lambda \widesim{P} \frac{a_n}{\norm{f}_2^4 E_n^2} \int\limits_{-a_n}^{a_n} \abs{\Delta_n^2 J^s_{n,X}(\lambda) - \abs{\hat{f}(\la)}^2 E_n}^2 d\lambda
$$
and 
\begin{gather*}
2a_n\abs{\frac{ \norm{f}_2^2 E_n - \Delta_nS_{n,X} }{\Delta_n\norm{f}_2^2  S_{n,X}}}^2\int\limits_{-a_n}^{a_n} \abs{\hat{f}(\la)}^4 d\lambda
 =  O_P\big(a_n ((n\Delta_n)^{-1} + \omega_f(\Delta_n)^2)\big) = o_P(1)
\end{gather*}
as \niy. Thus, it remains to prove that 
\begin{equation}
 a_n\int_{-a_n}^{a_n}\abs{\Delta_n^2 J^s_{n,X}(\lambda) -  \abs{\hat f(\lambda)}^2 E_n}^2 d\lambda = o_P(E_n^2),\quad \niy. \label{eq:JE}
\end{equation}

\underline{Step 3. Numerator}. As with the denominator, we start with examining the discretized version of $J_{n,X}^s(\lambda)$:
\begin{gather*}
R_n(\lambda) = \sum_{\abs{m}\le m_n} W_n(m) \abs{\sum_{j=1}^{n} X_{n}(t_{j,n})e^{it_{j,n}\nu_{n}(m,\lambda)}}^2\\
 = \sum_{\abs{m}\le m_n} W_n(m) \abs{\sum_{j=1}^{n}\sum_{l=j-N+1}^{j+N}f(t_{j-l,n})\eps_{l,n}e^{it_{j,n}\nu_{n}(m,\lambda)}}^2 .
\end{gather*}
We proceed in three substeps, first considering the following expression 
\begin{gather*}
R_{1,n}(\la) = \sum_{\abs{m}\le m_n} W_n(m) \abs{\sum_{l=N+1}^{n-N}\sum_{j=l-N}^{l+N-1}f(t_{j-l,n})\eps_{l,n}e^{it_{j,n}\nu_{n}(m,\lambda)}}^2. 
\end{gather*}

\underline{Step 3a):} We shall show 
\begin{equation}
a_n \int_{-a_n}^{a_n}\abs{ \abs{\hat f(\lambda)}^2 E_n - \Delta_n^2 R_{1,n}(\lambda)}^2 d\lambda = o_P(E_n^2),\quad \niy. \label{e:3a}
\end{equation}

We have for $\lambda\in [-a_n,a_n]$ that
\begin{gather*}
R_{1,n}(\la) = \sum_{\abs{m}\le m_n} W_n(m) \abs{\sum_{l=N+1}^{n-N}\eps_{l,n} e^{it_{l,n}\nu_{n}(m,\lambda)}\sum_{j=l-N}^{l+N-1}f(t_{j-l,n})e^{it_{j-l,n}\nu_{n}(m,\lambda)}}^2\\
 = \sum_{\abs{m}\le m_n} W_n(m) \abs{\sum_{k=-N}^{N-1} f(t_{k,n})e^{it_{k,n}\nu_n(m,\lambda)}}^2\abs{\sum_{l=N+1}^{n-N}\eps_{l,n}e^{it_{l,n}\nu_n(m,\lambda)}}^2\\
= F_{n}(\lambda)\sum_{l=N+1}^{n-N}\eps_{l,n}^2 + 
\sum_{N+1\le l_1\ne l_2\le n-N} a_{l_1,l_2,n}(\lambda)\eps_{l_1,n}\eps_{l_2,n} ,
\end{gather*}
where 
\begin{gather*}
F_n(\lambda) = \sum_{\abs{m}\le m_n} W_n(m) \abs{\sum_{k=-N}^{N-1} f(t_{k,n})e^{it_{k,n}\nu_{n}(m,\lambda)}}^2,\nonumber \\
a_{l_1,l_2,n}(\lambda) = 	\sum_{\abs{m}\le m_n} W_n(m)\abs{\sum_{k=-N}^{N-1} f(t_{k,n})e^{it_{k,n} \nu_{n}(m,\lambda)} }^2  e^{i(l_1-l_2)\Delta_n\nu_{n}(m,\lambda)}\ind{[-a_n,a_n]}(\lambda). 
\end{gather*}
With the help of Lemma~\ref{lem:fourier}, we obtain
\begin{gather*}
\abs{ \abs{\hat f(\lambda)}^2 E_n - \Delta_n^2 R_{1,n}(\lambda)} \le O\Big(\big(W_n^{(2)}\big)^{1/2}(n\Delta_n )^{-1} + \omega_f(\Delta_n) + (1+\abs{\lambda})\Delta_n\Big) E_n\\
+ \Delta_n^2  \abs{\sum_{N+1\le l_1\ne l_2\le n-N} a_{l_1,l_2,n}(\lambda)\eps_{l_1,n}\eps_{l_2,n}} ,\quad \niy.
\end{gather*}
Therefore, 
\begin{gather*}
a_n \int_{-a_n}^{a_n}\abs{ \abs{\hat f(\lambda)}^2 E_n - \Delta_n^2 R_{1,n}(\lambda)}^2 d\lambda = O\Big(a^2_n W_n^{(2)}(n\Delta_n )^{-2} + a^2_n\omega_f(\Delta_n)^2 + a_n^4 \Delta_n^2 \Big) E^2_n \\
+ 2a_n\Delta_n^4 \int_{\R} \abs{\sum_{N+1\le l_1\ne l_2\le n-N} a_{l_1,l_2,n}(\lambda)\eps_{l_1,n}\eps_{l_2,n}}^2 d\lambda,\quad \niy .
\end{gather*}
By Lemma~\ref{lemma-crossums-int}, 
$$
\int_{\R}\abs{\sum_{N+1\le l_1\neq l_2\le n-N} a_{l_1,l_2,n}(\lambda) \eps_{l_1,n}\eps_{l_2,n} }^2d\lambda = O_P(A_n n^{4/\alpha-2}\Delta_n^{4/\alpha}),
$$
where
$
	A_n = 	\int_{-a_n}^{a_n}\sum_{N+1\le l_1\neq l_2\le n-N} \abs{a_{l_1,l_2,n}(\lambda)}^2d\lambda.
$
By Lemma~\ref{smoothingestimate}, 
$$
\sum_{N+1\le l_1\neq l_2\le n-N} \abs{a_{l_1,l_2,n}(\lambda)}^2 = 
O(W_n^*(K_n^* n)^2),\quad\niy,
$$
where
\begin{gather*}
K_n^* = \sup_{\abs{m}\le m_n}\abs{\sum_{k=-N}^{N-1} f(t_{k,n})e^{it_{k,n} \nu_{n}(m,\lambda)}}^2  
\le \left(\sum_{k=-N}^{N-1} \abs{f(t_{k,n})} \right)^2 \sim  \Delta_n^{-2}\norm{f}^2_1,\ \niy.
\end{gather*}
Hence, 
\begin{gather*}
a_n\Delta_n^4\int_{\R} \abs{\sum_{N+1\le l_1\ne l_2\le n-N} a_{l_1,l_2,n}(\lambda)\eps_{l_1,n}\eps_{l_2,n}}^2 d\lambda = O_P(a_n\Delta_n^4a_nW_n^*(K_n^*n)^2 n^{4/\alpha-2}\Delta_n^{4/\alpha})
\\ = O_P(a_n^2 W_n^* (n\Delta_n)^{4/\alpha}) = O_P\left(a_n^2 W_n^* E_n^2\right),\ \niy.
\end{gather*}

Combining the estimates, we get \eqref{e:3a}. 
\smallskip

\underline{Step 3b):} We get
\begin{equation}
a_n\int_{-a_n}^{a_n}\abs{\Delta_n^2 R_{n}(\lambda) - \abs{\hat f(\lambda)}^2E_n}^2d\lambda  = o_P(E_n^2),\quad \niy. \label{e:3b}
\end{equation}

Indeed, write
\begin{gather*}
a_n\int_{-a_n}^{a_n}\abs{\Delta_n^2 R_{n}(\lambda) - \abs{\hat f(\lambda)}^2E_n}^2d\lambda \\\le  2a_n\int_{-a_n}^{a_n}\Delta_n^4\abs{R_n(\lambda) - R_{1,n}(\lambda)}^2d\lambda + 2a_n \int_{-a_n}^{a_n}\abs{\Delta_n^2 R_{1,n}(\lambda) - \abs{\hat f(\lambda)}^2E_n}^2d\lambda\\
= 2a_n\Delta_n^4\int_{-a_n}^{a_n}\abs{R_n(\lambda) - R_{1,n}(\lambda)}^2d\lambda + o_P(E_n^2),\quad \niy.
\end{gather*}
Let us estimate the first expression. Take some positive vanishing sequence $\set{\theta_n,n\ge 1}$, which will be specified later. Using \eqref{simpleineq}, we have
\begin{gather*}
\abs{R_{1,n}(\lambda) - R_n(\lambda)}\le \theta_n R_{1,n}(\lambda) + (1+\theta_n^{-1})\\
\times\sum_{\abs{m}\le m_n}W_n(m)\abs{\left(\sum_{l=2-N}^{N}\sum_{j=1}^{l+N-1} + \sum_{l=n-N+1}^{n+N}\sum_{j=l-N}^{n}\right)\eps_{l,n}e^{i t_{l,n} \nu_{n}(m,\lambda)}f(t_{j-l,n})e^{it_{j-l,n}\nu_{n}(m,\lambda)} }^2\\
\le \theta_n R_{1,n}(\lambda) + 2(1+\theta_n^{-1})\left(R_{2,n}(\lambda)+R_{3,n}(\lambda)\right),
\end{gather*}
where 
\begin{gather*}
	R_{2,n}(\lambda)= \sum_{\abs{m}\le m_n}W_n(m)\abs{\sum_{l=2-N}^{N}\eps_{l,n}e^{i t_{l,n} \nu_{n}(m,\lambda)}\sum_{k=1-l}^{N-1}f(t_{k,n})e^{it_{k,n}\nu_{n}(m,\lambda)} }^2,\\
	R_{3,n}(\lambda) = \sum_{\abs{m}\le m_n}W_n(m)\abs{\sum_{l=n-N+1}^{n+N}\eps_{l,n}e^{i t_{l,n} \nu_{n}(m,\lambda)}\sum_{k=-N}^{n-l} f(t_{k,n})e^{it_{k,n}\nu_{n}(m,\lambda)} }^2.
\end{gather*}
Hence, 
\begin{gather*}
a_n\int_{-a_n}^{a_n}\abs{R_n(\lambda) - R_{1,n}(\lambda)}^2d\lambda \\ \le 2a_n\theta_n^2 \int_{-a_n}^{a_n} R_{1,n}(\lambda)^2 d\lambda + 16a_n(1+\theta_n^{-1})^2\int_{-a_n}^{a_n}\big(R_{2,n}(\lambda)^2 + R_{3,n}(\lambda)^2\big)d\lambda.
\end{gather*}

Now 
\begin{gather*}
R_{2,n}(\lambda) = \sum_{l=2-N}^{N} \eps_{l,n}^2 \sum_{\abs{m}\le m_n}W_n(m) \abs{\sum_{k=1-l}^{N-1}f(t_{k,n})e^{it_{k,n}\nu_{n}(m,\lambda)} }^2\\ 
+ \sum_{\substack{l_1,l_2=2-N\\l_1\neq l_2}}^{N-1} b_{l_1,l_2,n}(\lambda) \eps_{l_1,n}\eps_{l_2,n}\\
\le \sum_{l=2-N}^{N} \eps_{l,n}^2 \left(\sum_{k=-N}^{N-1}\abs{f(t_{k,n})}\right)^2 + \sum_{\substack{l_1,l_2=2-N\\l_1\neq l_2}}^{N} b_{l_1,l_2,n}(\lambda) \eps_{l_1,n}\eps_{l_2,n} 
=: R_{4,n} + R_{5,n}(\lambda),
\end{gather*}
where 
$$
b_{l_1,l_2,n}(\lambda) = \sum_{\abs{m}\le m_n}W_n(m)e^{i(l_1-l_2)\Delta_n\nu_{n}(m,\lambda)} \sum_{k_1=1-l_1}^{N-1} \sum_{k_2=1-l_2}^{N-1}f(t_{k_1,n})f(t_{k_2,n})e^{i(k_1-k_2)\Delta_n\nu_{n}(m,\lambda)}.
$$
The functions $b_{l_1,l_2,n}$ satisfy 
$$
\abs{b_{l_1,l_2,n}(\lambda)}\le \left(\sum_{k=-N}^{N-1}\abs{f(t_{k,n})}\right)^2 \sim \Delta_n^{-2} \norm{f}_1^2,\ \niy.
$$
 Thus 
$$ \int_\mathbb{R} \sum_{1\le l_1<l_2\le N} \abs{  b_{l_1,l_2,n}(\lambda)}^2\ind{[-a_n,a_n]}(\lambda) \, d\lambda = O(a_n N^2\Delta_n^{-4}),\ \niy,  $$
and therefore Lemma~\ref{lemma-crossums-int} implies 
$$\int_{-a_n}^{a_n}R_{5,n}(\lambda)^2 d\lambda = O_P (a_n N^2\Delta_n^{4/\alpha-4}N^{4/\alpha-2}) = O_P(a_n\Delta_n^{-4}),\quad \niy.$$
Further,
$$
R_{4,n}\sim \Delta_n^{-2} \norm{f}_1^2\sum_{l=2-N}^{N} \eps_{l,n}^2,\ \niy, 
$$
so thanks to \eqref{s5n-weak}, $R_{4,n} = O_P (N^{2/\alpha}\Delta_n^{2/\alpha-2}) = O_P(\Delta_n^{-2})$, \niy. Thus, we get 
$$\int_{-a_n}^{a_n} R_{2,n}(\lambda)^2 \, d\lambda = O_P(a_n\Delta_n^{-4}), \niy.$$
 Similarly, $\int_{-a_n}^{a_n}R_{3,n}(\lambda)^2 \, d\lambda = O_P(a_n\Delta_n^{-4})$, \niy. 

Setting $\theta_n = (n\Delta_n)^{-2/(3\alpha)}$, we get by (A\ref{item:a=nD}) that
\begin{gather*}
a_n(1+\theta_n^{-1})^2 \int_{-a_n}^{a_n}\big(R_{2,n}(\lambda)^2 + R_{3,n}(\lambda)^2\big)d\lambda     = O_P(a_n^2\Delta_n^{-4}(n\Delta_n)^{4/(3\alpha)})= o_P(n^{4/\alpha}\Delta_n^{4/\alpha-4})
\end{gather*}
as \niy.
Therefore, we arrive at 
\begin{gather*}
a_n\int_{-a_n}^{a_n}\abs{\Delta_n^2 R_n(\lambda) - \Delta_n^2 R_{1,n}(\lambda)}^2d\lambda \\ \le 2a_n(n\Delta_n)^{-4/(3\alpha)}\Delta_n^4 \int_{-a_n}^{a_n} R_{1,n}(\lambda)^2 d\lambda + o_P(n^{4/\alpha}\Delta_n^{4/\alpha}),\quad \niy.
\end{gather*}
Noting that $o_P(n^{4/\alpha}\Delta_n^{4/\alpha}) = o_P(E_n^2)$, $\niy$ and by Step 3a)
\begin{gather*}
\Delta_n^4 \int_{-a_n}^{a_n} R_{1,n}(\lambda)^2 d\lambda \le 
2\left(\int_{-a_n}^{a_n} \abs{\hat f(\lambda)}^4 d\lambda\cdot  E_n^2 + \int_{-a_n}^{a_n}\abs{ \abs{\hat f(\lambda)}^2 E_n - \Delta_n^2 R_{1,n}(\lambda)}^2 d\lambda \right)\\
= \left(2\int_{-a_n}^{a_n} \abs{\hat f(\lambda)}^4 d\lambda + o_P(1) \right)E_n^2,\quad \niy,
\end{gather*}
we get by (A\ref{item:a=nD}) 
$
a_n\int_{-a_n}^{a_n}\abs{\Delta_n^2 R_n(\lambda) - \Delta_n^2 R_{1,n}(\lambda)}^2d\lambda  = o_P(E_n^2),\quad \niy,
$
whence \eqref{e:3b} follows from \eqref{e:3a}.
\smallskip

\underline{Step 3c):} Finally we have
\begin{equation}
a_n\Delta_n^4 \int_{-a_n}^{a_n}\abs{  J^s_{n,X}(\lambda) -  R_{n}(\lambda)}^2d\lambda  = o_P(E_n^2),\quad \niy. \label{e:3c}
\end{equation}

Using \eqref{simpleineq} again, write 
\begin{gather}
\abs{ J^s_{n,X}(\lambda) - R_{n}(\lambda)} = \abs{R_n(\lambda)-\sum_{\abs{m}\le m_n}W_n(m)\abs{\sum_{j=1}^{n} X(t_{j,n})e^{it_{j,n}\nu_{n}(m,\lambda)}}^2}\nonumber\\
\le 
\delta_n R_n(\la) + (1+\delta_n^{-1}) \sum_{\abs{m}\le m_n}W_n(m)\abs{\sum_{j=1}^{n} \left(X_{n}(t_{j,n})-X(t_{j,n})\right)e^{it_{j,n}\nu_{n}(m,\lambda)}}^2\nonumber\\
= \delta_n R_n(\la) + (1+\delta_n^{-1}) \sum_{\abs{m}\le m_n}W_n(m)\abs{\int_\R \sum_{j=1}^{n}\big( f_n(t_{j,n}-s)- f(t_{j,n}-s)\big)e^{it_{j,n}\nu_{n}(m,\lambda)}\Lambda(ds)}^2 \nonumber \\
=: \delta_n R_n(\la) + (1+\delta_n^{-1})R_{6,n}(\la) \label{eq:upBound}
\end{gather}
for  $\delta_n = \omega_f(\Delta_n)^{2/3}$.   Hence, 
\begin{gather*}
 a_n\int_{-a_n}^{a_n}\abs{ J^s_{n,X}(\lambda) -  R_{n}(\lambda)}^2d\lambda \le 2a_n \delta_n^2 \int_{-a_n}^{a_n}R_n(\la)^2d\lambda + 2a_n(1+\delta_n^{-1})^2 \int_{-a_n}^{a_n}R_{6,n}(\la)^2d\lambda.
\end{gather*}
Define
$
h_{n,m}(s,\lambda) = \sum_{j=1}^n \big( f_n(t_{j,n}-s)- f(t_{j,n}-s)\big)e^{it_{j,n}\nu_{n}(m,\lambda)} \ind{[-a_n,a_n]}(\lambda).
$
Note that the summands do not exceed 
$\omega_f(\Delta_n)$, and at most $2N$ of them are not zero. Hence, $\norm{h_{n,m}(\cdot,\lambda)}_\infty\le 2N\omega_f(\Delta_n)$. 
Applying Lemma~\ref{lem-smallsquares-int}, we get 
$$
a_n\int_{-a_n}^{a_n} R_{6,n}(\lambda)^2 d\lambda = O_P(a_n^2 N^4 \omega_f(\Delta_n)^4 (n\Delta_n)^{4/\alpha}) = O_P(a_n^2\omega_f(\Delta_n)^4 n^{4/\alpha}\Delta_n^{4/\alpha-4}),\ \niy.
$$
Recalling that $\delta_n\to 0$ and $a_n\omega_f(\Delta_n)^{4/3}\to 0$  as $n\to \infty$ and using \eqref{e:3b}, we ultimately obtain 
\begin{gather*}
a_n\Delta_n^4\int_{-a_n}^{a_n} \abs{J^s_{n,X}(\lambda) - R_n(\lambda)}^2 \, d\lambda\\
=O_P\!  \left(  \! a_n(\omega_f(\Delta_n))^{4/3} \Big(\int_{\mathbb{R}} \abs{\hat{f}(\lambda)}^2 \, d\lambda +o_P(1)\Big)E_n^2 + \Delta_n^4(\omega_f(\Delta_n))^{-4/3}a_n^2\omega_f(\Delta_n)^4 n^{4/\alpha}\Delta_n^{4/\alpha-4} \! \right) \\
=o_p(E_n^2+n^{4/\alpha}\Delta_n^{4/\alpha}) =o_p(E_n^2),\quad \niy. 
\end{gather*}\smallskip
Combining \eqref{e:3b} and \eqref{e:3c}, we come to \eqref{eq:JE}.
\end{proof}

\subsection*{Kernel $f$ with unbounded support}\label{sect:Proof-subsect:generalf}

\begin{proof}[Proof of Theorem \ref{T:main_unboundedSupport}]

Part (ii) is derived from part (i) exactly the same way as in Theorem \ref{T:main}.

In order to prove part (i), we need to show that 
$$
a_n \int_{-a_n}^{a_n} \abs{\Delta_n I_{n,X}^s(\lambda) - \abs{\hat g(\lambda)}^2}^2 d\lambda \overset{P}{\longrightarrow}0, n\to\infty.
$$

We start by setting $N_n = \lfloor\omega_f(\Delta_n)^{-1/a}\Delta_n^{-1}\rfloor$, $n\ge 1$, so that  $T_n:= N_n \Delta_n\sim\omega_f(\Delta_n)^{-1/a}$, \niy. 
Recall that $\eps_{l,n} = \Lambda([(l-1)\Delta_n,l\Delta_n])$, $l\in\Z$, and set $E_n = \sum_{l=N_n+1}^{n-N_n} \eps_{l,n}^2$.

The rest of the proof follows the same scheme as that of Theorem~\ref{T:main}. Specifically, examining Step 2 of the latter, it is enough to show that
\begin{enumerate}[(i)]
\item  $S_{n,X} =  \frac{1}{\Delta_n}\int_{\R} f(x)^2 dx \cdot E_n\Big(1 + O_P\big(\omega_f(\Delta_n)^{1-1/(2a)} + N_n^{2/\alpha}n^{-2/\alpha}+ (n\Delta_n)^{-1/2} \big)\Big)$, \niy;
\item $a_n \int_{-a_n}^{a_n} \abs{\Delta_n^2 J_{n,X}^s(\lambda) - \abs{\hat f(\lambda)}^2E_n}^2d\lambda = o_P (E_n^2)$, \niy.
\end{enumerate}
We thus split the proof into two steps, establishing these relations.

\underline{Step 1.}  \allowdisplaybreaks

Define $f_n(x) = \sum_{k=-N_n}^{N_n-1} f(t_{k,n})\ind{[t_{k,n},t_{k+1,n})}(x)$, $X_n(t) = \int_{\R}f_n(t-s)\Lambda(ds)$.
Write
\begin{gather*}
S_{n,X_n} = \sum_{j=1}^n X_{n}(t_{j,n})^2 = \sum_{j=1}^n \left(\sum_{l=j-N_n+1}^{j+ N_n}f(t_{j-l,n})\eps_{l,n} \right)^2 \\
= \sum_{j=1}^n \sum_{l=j-N_n+1}^{j+N_n} f(t_{j-l,n})^2\eps_{l,n}^2 + \sum_{j=1}^n \sum_{\substack{l_1,l_2=j-N_n+1\\l_1\neq l_2}}^{j+N_n}f(t_{j-l_1,n})f(t_{j-l_2,n})\eps_{l_1,n}\eps_{l_2,n}\\
= \left(\sum_{l=N_n+1}^{n-N_n}\sum_{j=l-N_n}^{l+N_n-1}  +
\sum_{l=2-N_n}^{N_n}\sum_{j=1}^{l+N_n-1} + \sum_{l=n-N_n+1}^{n+N_n}\sum_{j=l-N_n}^{n}\right) f(t_{j-l,n})^2\eps_{l,n}^2\\
+ \sum_{j=1}^n \sum_{\substack{l_1,l_2=j-N_n+1\\l_1\neq l_2}}^{j+N_n}f(t_{j-l_1,n})f(t_{j-l_2,n})\eps_{l_1,n}\eps_{l_2,n}=: S_{1,n} + S_{2,n} + S_{3,n} + S_{4,n}.
\end{gather*}
First note that 
\begin{gather*}
\abs{S_{1,n} - \Delta_n^{-1}\int_\R f(x)^2  dx\cdot E_n} = \abs{\sum_{k=-N_n}^{N_n-1} f(t_{k,n})^2 - \Delta_n^{-1}\int_{\R} f(x)^2 dx }E_n \\
\le \abs{\sum_{k=-N_n}^{N_n-1} f(t_{k,n})^2 - \Delta_n^{-1}\int_{-T_n}^{T_n} f(x)^2 dx }E_n  + 
\Delta_n^{-1}\abs{\int_{\set{x:\abs{x}>T_n}} f(x)^2 dx }E_n  \\ 
=  O_P\big((T_n \omega_f(\Delta_n) + T_n^{1-2a})\Delta_n^{-1} E_n\big) = O_P\big(\omega_f(\Delta_n)^{1-1/a}n^{2/\alpha}\Delta_n^{2/\alpha-1}\big) ,\quad \niy,
\end{gather*}
where we have used that $T_n \omega_f(\Delta_n)^{1/a}$ is bounded away both from zero and from infinity.
Similarly to the finite support case, 
$$
S_{2,n} + S_{3,n} = O_P(N_n^{2/\alpha}\Delta_n^{2/\alpha-1}) = O_P(N_n^{2/\alpha}n^{-2/\alpha} n^{2/\alpha}\Delta_n^{2/\alpha-1}),\ \niy,
$$
and
$$
S_{4,n} = 2\sum_{2-N_n\le l_1<l_2\le n+N_n} a_{l_1,l_2,n} \eps_{l_1,n}\eps_{l_2,n},
$$
where  
\begin{gather*}
	a_{l_1,l_2,n} = \sum_{j = (l_2-N_n)\vee 1}^{(l_1+N_n)\wedge n} f(t_{j-l_1,n}) f(t_{j-l_2,n}).
\end{gather*}
Estimate
\begin{gather*}
\sum_{\substack{l_1,l_2=1-N_n\\l_1< l_2}}^{n+N_n} \abs{a_{l_1,l_2,n}}^2 \le \sum_{\substack{l_1,l_2=1-N_n\\l_1< l_2}}^{n+N_n}\left(\sum_{j = (l_2-N_n)\vee 1}^{(l_1+N_n)\wedge n} |f(t_{j-l_1,n}) f(t_{j-l_2,n})|\right)^2 \\
\sim \Delta_n^{-4}\int_{-T_n}^{n\Delta_n+T_n}\int_{x}^{n\Delta_n+T_n} \left(\int_{(y-T_n)\vee 0}^{(x+T_n)\wedge (n\Delta_n)}|f(z-x) f(z-y)|dz\right)^2dy\,dx\\
\le \Delta_n^{-4}\int_{-T_n}^{n\Delta_n+T_n}\int_{0}^{n\Delta_n+T_n-x} \left(\int_{y'-T_n}^{T_n}|f(z') f(z'-y')|dz'\right)^2 dy'\,dx = O(n\Delta_n^{-3}),\ \niy,
\end{gather*}
since 
$$
\int_\R \left(\int_{\R}|f(z) f(z-y)|dz\right)^2 dy = \norm{|f|\star |f|}^2_2 = \norm{\widehat{|f|}^2}^2_2 
<\infty,
$$
where $\star$ is the convolution operation.
Hence by Lemma~\ref{lemma-crossums}, 
\begin{gather*}
	S_{4,n} = O_P (n^{1/2}\Delta_n^{-3/2}n^{2/\alpha-1}\Delta_n^{2/\alpha}) = O_P((n\Delta_n)^{-1/2}n^{2/\alpha}\Delta_n^{2/\alpha-1}),\quad \niy.
\end{gather*}
Collecting the estimates, we get
\begin{equation}\label{eq:SnXnasym-unboundedsupp}
\begin{gathered}
\abs{ S_{n,X_n}- \Delta_n^{-1}\int_\R f(x)^2  dx\cdot E_n} \\
= O_P\big( \big(\omega_f(\Delta_n)^{1-1/a} + N_n^{2/\alpha}n^{-2/\alpha}+ (n\Delta_n)^{-1/2} \big)n^{2/\alpha}\Delta_n^{2/\alpha-1}\big) 
\end{gathered}
\end{equation}
as \niy.

Further, by \eqref{simpleineq}, for some vanishing sequence $\set{\delta_n,n\ge 1}$ 
\begin{equation}\label{eq:SnXn-SnX}
	\abs{S_{n,X_n}- S_{n,X}}\le \delta_n S_{n,X_n} + (1+\delta_{n}^{-1})S_{5,n},
\end{equation}

where 
\begin{gather*}
	S_{5,n} = \sum_{j=1}^{n}\left(X_n(t_{j,n})- X(t_{j,n})\right)^2
	= \sum_{j=1}^{n}\left(\int_{\R}\big(f(t_{j,n}-s)-f_n(t_{j,n}-s)\big)\Lambda(ds)\right)^2\\
	\le 3\left(\sum_{j=1}^{n}\left( \int_{t_{j,n}-T_n}^{t_{j,n}+T_n} \big(f(t_{j,n}-s)-f_n(t_{j,n}-s)\big)\Lambda(ds)\right)^2 \right.\\
	+ \sum_{j=1}^{n}\left(\int_{\{s: |s-t_{j,n}|\in [T_n,n\Delta_n]\}} f(t_{j,n}-s)\Lambda(ds)\right)^2\\
	\left. + \sum_{j=1}^{n}\left(\int_{\{s: |s-t_{j,n}|>n\Delta_n\}} f(t_{j,n}-s)\Lambda(ds)\right)^2\right)=:S_{6,n} + S_{7,n} + S_{8,n},
\end{gather*}
where $T_n = N_n\Delta_n$. Since $|f_n(y)-f(y)|\le \omega_f(\Delta_n)$ for $y\in [-N_n \Delta_n, N_n\Delta_n]$, by Lemma~\ref{lem-smallsquares}, 
$$
S_{6,n} = O_P \big(\omega_f(\Delta_n)^2 T_n n^{2/\alpha} \Delta_n^{2/\alpha -1}\big) = o_P\big(\omega_f(\Delta_n)^{2-1/a}  n^{2/\alpha} \Delta_n^{2/\alpha -1}\big),\ \niy.
$$

To estimate $S_{7,n}$, we use Lemma~\ref{lem:lepage}.  For each $n\ge 1$, the process $$Y_{t,n}= \int_{\{s: |s-t_{j,n}|\in [T_n,n\Delta_n]\}} f(t-s)\Lambda(ds),\quad t\in [0,n\Delta_n],$$ has the same distribution as
\begin{equation*}
\tilde Y_{t,n} = C_\alpha^{1/\alpha}(3n\Delta_n)^{1/\alpha}\sum_{k=1}^\infty \Gamma_k^{-1/\alpha} f(t-\xi_k)\ind{|\xi_k-t|\in[T_n,n\Delta_n]}^{\vphantom{q}}\zeta_k,\quad t\in [0,n\Delta_n],
\end{equation*}
where $\set{\Gamma_k,k\ge 1}$ and $\set{\zeta_k,k\ge 1}$ are as in Lemma~\ref{lem:lepage}, $\set{\xi_k,k\ge 1}$ are iid uniformly distributed over $[-n\Delta_n,2n\Delta_n]$. Since we are concerned with convergence in probability, we can freely assume that $Y_{t,n}=\tilde Y_{t,n}$. Then, taking into account (F\ref{i:F_asy}$'$), 
\begin{gather*}
\ex{\sum_{j=1}^{n} Y_{t_{j,n},n}^2\,\Big|\,\Gamma} \le C_\alpha^{2/\alpha}(3n\Delta_n)^{2/\alpha} \sum_{j=1}^n \sum_{k=1}^\infty \Gamma_k^{-2/\alpha}\ex{f(t_{j,n}-\xi_k)^2\ind{|\xi_k-t_{j,n}|\in[T_n,n\Delta_n]}}\\
\le C(n\Delta_n)^{2/\alpha-1} \sum_{k=1}^\infty \Gamma_k^{-2/\alpha} \sum_{j=1}^{n} \int_{\{x:|x-t_{j,n}|\in [T_n,n\Delta_n]\}} f(t_{j,n}-x)^2 dx\\
\le C(n\Delta_n)^{2/\alpha-1} \sum_{k=1}^\infty \Gamma_k^{-2/\alpha} \sum_{j=1}^{n} \int_{\{x:|x-t_{j,n}|\ge T_n\}} |t_{j,n}-x|^{-2a} dx\\
\le Cn^{2/\alpha}\Delta_n^{2/\alpha-1}T_n^{1-2a}\sum_{k=1}^\infty \Gamma_k^{-2/\alpha}.
\end{gather*}
Since by the strong law of large numbers, $\Gamma_k \sim k$, $k\to\infty$, a.s., the last series converges almost surely, therefore, given $\Gamma$, $\sum_{j=1}^{n} Y_{t_{j,n},n}^2 = O_P (T_n^{1-2a}n^{2/\alpha}\Delta_n^{2/\alpha-1})$, \niy. Consequently, $S_{7,n} = O_P (T_n^{1-2a}n^{2/\alpha}\Delta_n^{2/\alpha-1}) = O_P\big(\omega_f(\Delta_n)^{2-1/a}n^{2/\alpha}\Delta_n^{2/\alpha-1}\big)$, \niy.

To estimate $S_{8,n}$, let $Z_{t,n} = \int_{\{s: |s-t|>n\Delta_n\}} f(t-s) \Lambda(ds)$ and define for some $b>0$ the positive density over $\R$: 
\begin{equation}\label{eq:phi}
\varphi(x) = K_b |x|^{-1}\big(\abs{\log\abs{x}}+1\big)^{-1-b},
\end{equation}
where $K_b = \left(\int_{\R}|x|^{-1}\big(\abs{\log\abs{x}}+1\big)^{-1-b}dx\right)^{-1}$ is the normalizing constant. As before, we can assume that 
\begin{equation*}
Z_{t,n} = C_\alpha^{1/\alpha}\sum_{k=1}^\infty \Gamma_k^{-1/\alpha} f(t-\eta_k)\varphi(\eta_k)^{-1/\alpha}\ind{\{|\eta_k-t|>n\Delta_n\}}^{\vphantom{q}}\zeta_k,\quad t\ge 0,
\end{equation*}
where $\set{\Gamma_k,k\ge 1}$ and $\set{\zeta_k,k\ge 1}$ are as in Lemma~\ref{lem:lepage}, $\set{\eta_k,k\ge 1}$ are iid with density $\varphi$. Then
\begin{gather*}
\ex{\sum_{j=1}^{n} Z_{t_{j,n},n}^2\,\Big|\,\Gamma} = C_\alpha^{2/\alpha} \sum_{j=1}^n \sum_{k=1}^\infty \Gamma_k^{-2/\alpha}\ex{f(t_{j,n}-\eta_k)^2\varphi(\eta_k)^{-2/\alpha}\ind{|\eta_k-t_{j,n}|>n\Delta_n}} \\
= C_\alpha^{2/\alpha} \sum_{j=1}^n \int_{\{x: |x-t_{j,n}|>n\Delta_n\}} f(t_{j,n}-x)^2 \varphi(x)^{1 - 2/\alpha} dx \sum_{k=1}^\infty\Gamma_k^{-2/\alpha} =: C_\alpha^{2/\alpha}L_n\sum_{k=1}^\infty\Gamma_k^{-2/\alpha} .
\end{gather*}
It is enough to study the term $L_n$:
\begin{equation}\label{eq:L_n-est1}
\begin{gathered}
L_n 
\le C\sum_{j=1}^n\int_{\{x: |x-t_{j,n}|>n\Delta_n\}} \abs{t_{j,n}-x}^{-2a} \varphi(x)^{1 - 2/\alpha} dx
\\
\sim C\Delta_n^{-1}\int_0^{n\Delta_n} \int_{\{x: |x-t|>n\Delta_n\}}  \abs{t-x}^{-2a}\varphi(x)^{1 - 2/\alpha}  dx\, dt\\
=C\Delta_n^{-1}\int_{\{y: |y|>n\Delta_n\}}  \abs{y}^{-2a}\int_0^{n\Delta_n} \varphi(t-y)^{1 - 2/\alpha}  dt\, dy.
\end{gathered}
\end{equation}
Since $\varphi$ is monotonically decreasing on $[1,\infty)$, 
we have
\begin{equation}\label{eq:L_n-est2}
\begin{gathered}
L_n \le C\Delta_n^{-1}\int_{\{y: |y|>n\Delta_n\}} \abs{y}^{-2a} n\Delta_n (n\Delta_n + \abs{y})^{2/\alpha-1} \log^r(n\Delta_n + \abs{y}) dy \\
\le C\Delta_n^{-1} \big(n\Delta_n\big)^{2/\alpha+1-2a}\log^r(n\Delta_n),
\end{gathered}
\end{equation}
due to L'Hospital's rule, where $r = (2/\alpha-1)(1+b)$. Since $a>2$, we get $$S_{8,n} = O_P\big((n\Delta_n)^{-1}n^{2/\alpha}\Delta_n^{2/\alpha-1}\big),  \niy.$$
Summing everything up, 
\begin{gather*}
S_{5,n} = n^{2/\alpha}\Delta_n^{2/\alpha-1}\cdot O_P\big(\omega_f(\Delta_n)^{2-1/a} +  (n\Delta_n)^{-1}\big),\ \niy.
\end{gather*}
Now set $\delta_n = \big(\omega_f(\Delta_n)^{2-1/a} +  (n\Delta_n)^{-1}\big)^{1/2}$ in \eqref{eq:SnXn-SnX}. Clearly,  $\delta_n\to 0$, \niy. Thus, using \eqref{eq:SnXnasym-unboundedsupp}, we arrive at 
$$S_{n,X}  =  \Delta_n^{-1}\left(\int_\R f(x)^2  dx + O_P\big(\omega_f(\Delta_n)^{1-1/(2a)} + N_n^{2/\alpha}n^{-2/\alpha} +  (n\Delta_n)^{-1/2}\big)\right)\cdot E_n,\ \niy,$$ 
as claimed.

\underline{Step 2.} This goes similar to Step 3 of Theorem~\ref{T:main}, so we omit some details. 
First, similarly to Step 3a, write
\begin{gather*}
R_{1,n}(\la) : = \sum_{\abs{m}\le m_n} W_n(m) \abs{\sum_{l=N_n+1}^{n-N_n}\sum_{j=l-N_n}^{l+N_n-1}f(t_{j-l,n})\eps_{l,n}e^{it_{j,n}\nu_{n}(m,\lambda)}}^2\\
= F_{n}(\lambda)\cdot E_n + 
\sum_{N_n+1\le l_1\ne l_2\le n-N_n} a_{l_1,l_2,n}(\lambda)\eps_{l_1,n}\eps_{l_2,n} ,
\end{gather*}
where 
\begin{gather*}
F_n(\lambda) = \sum_{\abs{m}\le m_n} W_n(m) \abs{\sum_{k=-N_n}^{N_n-1} f(t_{k,n})e^{it_{k,n}\nu_{n}(m,\lambda)}}^2,\nonumber \\
a_{l_1,l_2,n}(\lambda) = 	\sum_{\abs{m}\le m_n} W_n(m)\abs{\sum_{k=-N_n}^{N_n-1} f(t_{k,n})e^{it_{k,n} \nu_{n}(m,\lambda)} }^2  e^{i(l_1-l_2)\Delta_n\nu_{n}(m,\lambda)}\ind{[-a_n,a_n]}(\lambda). 
\end{gather*}
Using Lemma~\ref{lem:fourier}, we get
\begin{gather*}
\abs{ \abs{\hat f(\lambda)}^2 E_n - \Delta_n^2 R_{1,n}(\lambda)} \le O\Big(\big(W_n^{(2)}\big)^{1/2}(n\Delta_n )^{-1} + T_n\omega_f(\Delta_n) + T_n^{1-a} +  \abs{\lambda}\Delta_n\Big) E_n \\
+ \Delta_n^2\abs{\sum_{N_n+1\le l_1\ne l_2\le n-N_n} a_{l_1,l_2,n}(\lambda)\eps_{l_1,n}\eps_{l_2,n} }.
\end{gather*}
Then, using Lemmas~\ref{smoothingestimate} and \ref{lemma-crossums-int}, we get
\begin{gather}
a_n \int_{-a_n}^{a_n}\abs{ \abs{\hat f(\lambda)}^2 E_n - \Delta_n^2 R_{1,n}(\lambda)}^2 d\lambda \notag\\
= O\Big(a^2_n W_n^{(2)}(n\Delta_n )^{-2} + a^2_nT_n^2\omega_f(\Delta_n)^2 + a_n^2 T_n^{2-2a} + a_n^4 \Delta_n^2 \Big) E^2_n + O_P\big(a_n^2 W_n^*(n\Delta_n)^{4/\alpha}\big) \notag\\
= o_P(E_n^2),\quad \niy.\label{e:R-fE}
\end{gather}

Secondly, we define
\begin{gather*}
R_n(\lambda) 
 = \sum_{\abs{m}\le m_n} W_n(m) \abs{\sum_{j=1}^{n}\sum_{l=j-N_n+1}^{j+N_n}f(t_{j-l,n})\eps_{l,n}e^{it_{j,n}\nu_{n}(m,\lambda)}}^2 
\end{gather*}
and write, using the above,
\begin{gather*}
a_n\int_{-a_n}^{a_n}\abs{\Delta_n^2 R_{n}(\lambda) - \abs{\hat f(\lambda)}^2E_n}^2d\lambda\notag 
\\ \le 2a_n\Delta_n^4\int_{-a_n}^{a_n}\abs{R_n(\lambda) - R_{1,n}(\lambda)}^2d\lambda + o_P(E_n^2),\quad \niy.
\end{gather*}
In turn, for some positive sequence $\set{\theta_n, n\ge 1}$,
\begin{gather*}
a_n\int_{-a_n}^{a_n}\abs{R_n(\lambda) - R_{1,n}(\lambda)}^2d\lambda \\ \le 2a_n\theta_n^2 \int_{-a_n}^{a_n} R_{1,n}(\lambda)^2 d\lambda + 4a_n(1+\theta_n^{-1})^2\int_{-a_n}^{a_n}\big(R_{2,n}(\lambda)^2 + R_{3,n}(\lambda)^2\big)d\lambda
\end{gather*}
with 
\begin{gather*}
	R_{2,n}(\lambda)= \sum_{\abs{m}\le m_n}W_n(m)\abs{\sum_{l=2-N_n}^{N_n}\eps_{l,n}e^{i t_{l,n} \nu_{n}(m,\lambda)}\sum_{k=1-l}^{N_n-1}f(t_{k,n})e^{it_{k,n}\nu_{n}(m,\lambda)} }^2,\\
	R_{3,n}(\lambda) = \sum_{\abs{m}\le m_n}W_n(m)\abs{\sum_{l=n-N_n+1}^{n+N_n}\eps_{l,n}e^{i t_{l,n} \nu_{n}(m,\lambda)}\sum_{k=-N_n}^{n-l} f(t_{k,n})e^{it_{k,n}\nu_{n}(m,\lambda)} }^2.
\end{gather*}
As in Step 3b, these terms are estimated in a similar fashion, e.g.
\begin{gather*}
R_{2,n}(\lambda)\le \sum_{l=2-N_n}^{N_n} \eps_{l,n}^2 \left(\sum_{k=-N_n}^{N_n-1}\abs{f(t_{k,n})}\right)^2 + \sum_{\substack{l_1,l_2=2-N_n\\l_1\neq l_2}}^{N_n} b_{l_1,l_2,n}(\lambda) \eps_{l_1,n}\eps_{l_2,n} 
=: R_{4,n} + R_{5,n}(\lambda),
\end{gather*}
with 
\begin{gather*}
b_{l_1,l_2,n}(\lambda) = \sum_{\abs{m}\le m_n}W_n(m)e^{i(l_1-l_2)\Delta_n\nu_{n}(m,\lambda)}\\\times \sum_{k_1=1-l_1}^{N_n-1} \sum_{k_2=1-l_2}^{N_n-1}f(t_{k_1,n})f(t_{k_2,n})e^{i(k_1-k_2)\Delta_n\nu_{n}(m,\lambda)}\ind{[-a_n,a_n]}(\lambda).
\end{gather*}
Then from Lemma~\ref{lemma-crossums-int}
$$\int_{-a_n}^{a_n}R_{5,n}(\lambda)^2 d\lambda = O_P (a_n N_n^2\Delta_n^{4/\alpha-4}N_n^{4/\alpha-2}) = O_P(a_nT_n^{4/\alpha}\Delta_n^{-4}),\quad \niy,
$$
and from \eqref{s5n-weak}, $R_{4,n} =  O_P (N_n^{2/\alpha}\Delta_n^{2/\alpha-2}) = O_P(T_n^{2/\alpha}\Delta_n^{-2})$, \niy. Consequently, 
$$
a_n \int_{-a_n}^{a_n}R_{2,n}(\lambda)^2 d\lambda = O_P(a_n^2 T_n^{4/\alpha}\Delta_n^{-4}),\quad \niy,
$$
and similarly for $R_{3,n}$. Observing 
\[ \Delta_n^4\int_{-a_n}^{a_n} R_{1,n}(\lambda)^2 d\lambda = O_P(E_n^2) \]
by (\ref{e:R-fE}) and putting $\theta_n = a_n^{1/4}N_n^{1/\alpha}n^{-1/\alpha}$ yields
$$
a_n\Delta_n^4\int_{-a_n}^{a_n}\abs{R_n(\lambda) - R_{1,n}(\lambda)}^2d\lambda = O_P(a_n^{3/2} N_n^{2/\alpha}n^{-2/\alpha} E_n^2),\quad \niy.
$$
Since $a_n^{3/4}N_n^{1/\alpha}n^{-1/\alpha} = o(1)$, \niy, by  (F\ref{i:F_awnD}$'$), we get
$$
a_n\Delta_n^4\int_{-a_n}^{a_n}\abs{R_n(\lambda) - R_{1,n}(\lambda)}^2d\lambda = o_P(E_n^2),\quad \niy.
$$
Finally, by upper bound \eqref{eq:upBound}, write for any $\lambda\in[-a_n,a_n]$ and for some positive vanishing sequence $\set{\delta_n,n\ge 1}$
\begin{gather*}
\abs{ J^s_{n,X}(\lambda) - R_{n}(\lambda)} \le 
 \delta_n R_n(\la) + 3(1+\delta_n^{-1})\Bigg(  \sum_{\abs{m}\le m_n}W_n(m)\abs{\int_{\R} h_{n,m}(s,\lambda)\Lambda(ds)}^2 \\
+ \sum_{\abs{m}\le m_n}W_n(m)\abs{\sum_{j=1}^{n}\int_{\{s: |s-t_{j,n}|\in[T_n,n\Delta_n]\}} f(t_{j,n}-s)e^{it_{j,n}\nu_{n}(m,\lambda)} \Lambda(ds)}^2
\\
+ \sum_{\abs{m}\le m_n}W_n(m)\abs{\sum_{j=1}^{n}\int_{\{s: |s-t_{j,n}|>n\Delta_n\}} f(t_{j,n}-s)e^{it_{j,n}\nu_{n}(m,\lambda)} \Lambda(ds)}^2\Bigg)\\
 =:  \delta_n R_n(\la) + 3 (1+\delta_n^{-1}) \big(R_{6,n}(\lambda) + R_{7,n}(\lambda) + R_{8,n}(\lambda)\big),
\end{gather*}
where 
$$
h_{n,m}(s,\lambda) = \sum_{j=1}^{n}\big( f_n(t_{j,n}-s)- f(t_{j,n}-s)\big)e^{it_{j,n}\nu_{n}(m,\lambda)}\ind{|s-t_{j,n}|\le T_n}\ind{[-a_n,a_n]}(\lambda).
$$
Therefore,
\begin{gather*}
 a_n\int_{-a_n}^{a_n}\abs{ J^s_{n,X}(\lambda) -  R_{n}(\lambda)}^2d\lambda\\
 \le 2a_n \delta_n^2 \int_{-a_n}^{a_n}R_n(\la)^2d\lambda + 54 a_n(1+\delta_n^{-1})^2 \int_{-a_n}^{a_n}\big(R_{6,n}(\lambda)^2 + R_{7,n}(\lambda)^2 + R_{8,n}(\lambda)^2\big)d\lambda.
\end{gather*}
As in Step 3c, noting that $\norm{h_{n,m}(\cdot,\lambda)}_\infty\le 2 N_n\omega_f(\Delta_n)$ and applying Lemma~\ref{lem-smallsquares-int}, we get 
\begin{gather*}
\int_{-a_n}^{a_n} R_{6,n}(\lambda)^2 d\lambda = O_P(a_n N_n^4 \omega_f(\Delta_n)^4 (n\Delta_n)^{4/\alpha}) \\= O_P(a_n\omega_f(\Delta_n)^4 T_n^4 n^{4/\alpha}\Delta_n^{4/\alpha-4}),\ \niy.
\end{gather*}
To estimate $R_{7,n}(\lambda)$, define $$g_{n,m}(s,\lambda ) = \sum_{j=1}^{n}f(t_{j,n}-s)e^{it_{j,n}\nu_{n}(m,\lambda)}\ind{|s-t_{j,n}|\in [T_n,n\Delta_n]}\ind{[-a_n,a_n]}(\lambda),$$
$$
G_{n,m}(\lambda) = \int_{\R}g_{n,m}(s,\lambda) \Lambda(ds)
$$
so that $R_{7,n}(\lambda) = \sum_{|m|\le m_n} W_n(m) \abs{G_{n,m}(\lambda)}^2$. 
As before, we can assume that
\begin{equation*}
G_{n,m}(\lambda) = C_\alpha^{1/\alpha}(3n\Delta_n)^{1/\alpha}\sum_{k=1}^\infty \Gamma_k^{-1/\alpha} g_{n,m}(\xi_k,\lambda)\zeta_k,\quad t\in [0,n\Delta_n],
\end{equation*}
where $\set{\Gamma_k,k\ge 1}$ and $\set{\zeta_k,k\ge 1}$ are as in Lemma~\ref{lem:lepage}, $\set{\xi_k,k\ge 1}$ are iid uniformly distributed over $[-n\Delta_n,2n\Delta_n]$. Then, similarly to the proof of Lemma~\ref{lem-smallsquares-int}, 
\begin{gather*}
\ex{ \int_{\R} R_{7,n}(\lambda)^2 d\lambda\, \Big|\, \Gamma,\xi} \\
\le 
C^{4/\alpha}_\alpha (3n\Delta_n)^{4/\alpha}\int_{\R} \sum_{\abs{m}\le m_n}W_n(m) \ex{\abs{\sum_{k=1}^\infty \Gamma_k^{-1/\alpha} g_{n,m}(\xi_k,\lambda)\zeta_k}^4  \, \bigg|\, \Gamma,\xi}d\lambda \\
\le C (n\Delta_n)^{4/\alpha} \int_{\R} \sum_{\abs{m}\le m_n}W_n(m) \ex{\abs{\sum_{k=1}^\infty \Gamma_k^{-1/\alpha} g_{n,m}(\xi_k,\lambda)\zeta_k}^2  \, \bigg|\, \Gamma,\xi}^2 d\lambda\\
= C (n\Delta_n)^{4/\alpha} \int_{\R} \sum_{\abs{m}\le m_n}W_n(m) \Big(\sum_{k=1}^\infty \Gamma_k^{-2/\alpha} \abs{g_{n,m}(\xi_k,\lambda)}^2   \Big)^2 d\lambda.
\end{gather*}
Now estimate 
\begin{gather*}
\abs{g_{n,m}(s,\lambda)} \le \sum_{j=1}^{n}\abs{f(t_{j,n}-s)}\ind{\abs{s-t_{j,n}}\ge T_n}
\le C\sum_{j:\abs{s-t_{j,n}}\ge T_n} |s-t_{j,n}|^{-a}\\
\le C \Delta_n^{-a}\sum_{k:\abs{k}\ge N_n} |k|^{-a} \le C\Delta_n^{-a}N_n^{1-a} = C\Delta_n^{-1}T_n^{1-a}.
\end{gather*}
Since $g_{n,m}(s,\cdot)$ vanishes outside $[-a_n,a_n]$, we get 
\begin{gather*}
\ex{ \int_{\R} R_{7,n}(\lambda)^2 d\lambda\, \Big|\, \Gamma} \\
\le C (n\Delta_n)^{4/\alpha} \int_{\R} \sum_{\abs{m}\le m_n}W_n(m) \Big( \sum_{k=1}^\infty \Gamma_k^{-2/\alpha} (\Delta_n^{-1}T_n^{1-a})^2\ind{[-a_n, a_n]}(\lambda) \Big)^2 d\lambda \\
= C a_n T_n^{4-4a} n^{4/\alpha}\Delta_n^{4/\alpha-4} \bigg(\sum_{k=1}^\infty \Gamma_k^{-2/\alpha}\bigg)^2,
\end{gather*}
whence, as usual, $\int_{\R} R_{7,n}(\lambda)^2 d\lambda = O_P(a_n T_n^{4-4a} n^{4/\alpha}\Delta_n^{4/\alpha-4})$, $\niy$.

To estimate $R_{8,n}(\lambda)$, define 
\begin{gather*}
z_{n,m}(s,\lambda) = \sum_{j=1}^{n}f(t_{j,n}-s)e^{it_{j,n}\nu_{n}(m,\lambda)}\ind{|s-t_{j,n}|> n\Delta_n}\ind{[-a_n,a_n]}(\lambda),\\
Z_{n,m}(\lambda) = \int_{\R}z_{n,m}(s,\lambda) \Lambda(ds).
\end{gather*}
and let $\varphi$ be as in \eqref{eq:phi}. As before, we can assume that 
\begin{equation*}
Z_{n,m}(\lambda) = C_\alpha^{1/\alpha}\sum_{k=1}^\infty \Gamma_k^{-1/\alpha} z_{n,m}(\xi_k,\lambda)\varphi(\xi_k)^{-1/\alpha}\zeta_k,\quad t\ge 0,
\end{equation*}
where $\set{\Gamma_k,k\ge 1}$ and $\set{\zeta_k,k\ge 1}$ are as in Lemma~\ref{lem:lepage}, $\set{\xi_k,k\ge 1}$ are iid with density $\varphi$.

As in the proof of Lemma~\ref{lem-smallsquares-int},
\begin{gather*}
\ex{ \int_{\R} R_{8,n}(\lambda)^2 d\lambda\, \Big|\, \Gamma,\xi} \\
\le 
C^{4/\alpha}_\alpha \int_{\R} \sum_{\abs{m}\le m_n}W_n(m) \ex{\abs{\sum_{k=1}^\infty \Gamma_k^{-1/\alpha} z_{n,m}(\xi_k,\lambda)\varphi(\xi_k)^{-1/\alpha} \zeta_k}^4  \, \bigg|\, \Gamma,\xi}d\lambda \\
\le C  \int_{\R} \sum_{\abs{m}\le m_n}W_n(m) \ex{\abs{\sum_{k=1}^\infty \Gamma_k^{-1/\alpha} z_{n,m}(\xi_k,\lambda)\varphi(\xi_k)^{-1/\alpha}\zeta_k}^2  \, \bigg|\, \Gamma,\xi}^2 d\lambda\\
= C  \int_{\R} \sum_{\abs{m}\le m_n}W_n(m) \bigg(\sum_{k=1}^\infty \Gamma_k^{-2/\alpha} \abs{z_{n,m}(\xi_k,\lambda)}^2\varphi(\xi_k)^{-2/\alpha}\bigg)^2    d\lambda\\
= C  \int_{\R} \sum_{\abs{m}\le m_n}W_n(m) \bigg(\sum_{k=1}^\infty \Gamma_k^{-4/\alpha} \abs{z_{n,m}(\xi_k,\lambda)}^4\varphi(\xi_k)^{-4/\alpha}\\
+ \sum_{\substack{k_1,k_2 =1\\k_1\neq k_2}}^\infty \Gamma_{k_1}^{-2/\alpha}\Gamma_{k_2}^{-2/\alpha} \abs{z_{n,m}(\xi_{k_1},\lambda)}^2\varphi(\xi_{k_1})^{-2/\alpha} \abs{z_{n,m}(\xi_{k_2},\lambda)}^2\varphi(\xi_{k_2})^{-2/\alpha}\bigg) \, d\lambda.
\end{gather*}
It follows that 
\begin{gather*}
\ex{ \int_{\R} R_{8,n}(\lambda)^2 d\lambda\, \Big|\, \Gamma} 
\le 
 C  \int_{\R} \sum_{\abs{m}\le m_n}W_n(m) \bigg(\sum_{k=1}^\infty \Gamma_k^{-4/\alpha} \ex{\abs{z_{n,m}(\xi_1,\lambda)}^4\varphi(\xi_1)^{-4/\alpha}}\\
+ \sum_{\substack{k_1,k_2 =1\\k_1\neq k_2}}^\infty \Gamma_{k_1}^{-2/\alpha}\Gamma_{k_2}^{-2/\alpha} \ex{\abs{z_{n,m}(\xi_{k_1},\lambda)}^2\varphi(\xi_{k_1})^{-2/\alpha} \abs{z_{n,m}(\xi_{k_2},\lambda)}^2\varphi(\xi_{k_2})^{-2/\alpha}}\bigg)\, d\lambda\\
\le 
 C  \int_{\R} \sum_{\abs{m}\le m_n}W_n(m) \bigg(\sum_{k=1}^\infty \Gamma_k^{-4/\alpha} \ex{\abs{z_{n,m}(\xi_1,\lambda)}^4\varphi(\xi_1)^{-4/\alpha}}\\
+ \bigg(\sum_{k=1}^\infty \Gamma_{k}^{-2/\alpha}\bigg)^2 \ex{\abs{z_{n,m}(\xi_{1},\lambda)}^2\varphi(\xi_{1})^{-2/\alpha}}^2 \bigg) \, d\lambda.
\end{gather*}

Similarly to \eqref{eq:L_n-est1} and \eqref{eq:L_n-est2},
\begin{gather*}
\ex{\abs{z_{n,m}(\xi_1,\lambda)}^2\varphi(\xi_1)^{-2/\alpha}} \le C\int_\R \left(\sum_{j=1}^n |x-t_{j,n}|^{-a}\ind{|x-t_{j,n}|> n\Delta_n}\right)^2 \varphi(x)^{1 - 2/\alpha} dx
\\
\sim C\Delta_n^{-2}\int_{\R} \left(\int_0^{n\Delta_n} \abs{t-x}^{-a} \ind{\abs{t-x}> n\Delta_n} dt\right)^2\varphi(x)^{1 - 2/\alpha}  dx\\
\le C\Delta_n^{-2}(n\Delta_n)  \int_{\R} \int_0^{n\Delta_n} \abs{t-x}^{-2a} \ind{\abs{t-x}> n\Delta_n} dt\, \varphi(x)^{1 - 2/\alpha}  dx
\\
\le C\Delta_n^{-2} \big(n\Delta_n\big)^{2/\alpha+2-2a}\log^r(n\Delta_n),\\
\ex{\abs{z_{n,m}(\xi_1,\lambda)}^4\varphi(\xi_1)^{-4/\alpha}} \le C\int_\R \left(\sum_{j=1}^n |x-t_{j,n}|^{-a}\ind{|x-t_{j,n}|> n\Delta_n}\right)^4 \varphi(x)^{1 - 4/\alpha} dx
\\
\sim C\Delta_n^{-4}\int_{\R} \left(\int_0^{n\Delta_n} \abs{t-x}^{-a} \ind{\abs{t-x}> n\Delta_n} dt\right)^4\varphi(x)^{1 - 4/\alpha}  dx\\
\le C\Delta_n^{-4}(n\Delta_n)^3  \int_{\R} \int_0^{n\Delta_n} \abs{t-x}^{-4a} \ind{\abs{t-x}> n\Delta_n} dt\, \varphi(x)^{1 - 4/\alpha}  dx
\\
\le C\Delta_n^{-4} \big(n\Delta_n\big)^{4/\alpha+4-4a}\log^s(n\Delta_n)
\end{gather*}
where $r = (2/\alpha-1)(1+b)$, $s = (4/\alpha-1)(1+b)$. 
Therefore, $$\ex{ \int_{\R} R_{8,n}(\lambda)^2 d\lambda\, \Big|\, \Gamma}\le Ca_n \Delta_n^{-4} \big(n\Delta_n\big)^{4/\alpha+4-4a}\log^{s}(n\Delta_n),$$
whence $\int_{\R} R_{8,n}(\lambda)^2 d\lambda = O_P(a_n \Delta_n^{-4} \big(n\Delta_n\big)^{4/\alpha+4-4a}\log^{s}(n\Delta_n))$, $\niy$.

Collecting the estimates and setting $\delta_n = \omega_f(\Delta_n)T_n + T_n^{1-a} + (n\Delta_n)^{1-a}\log^{s/4}(n\Delta_n)$, 
we arrive at 
\begin{gather*}
 a_n\Delta_n^4 \int_{-a_n}^{a_n}\abs{ J^s_{n,X}(\lambda) -  R_{n}(\lambda)}^2d\lambda \\
 = O_P\Big(a_n^2\big(\omega_f(\Delta_n)^2T_n^2 + T_n^{2-2a} + (n\Delta_n)^{2-2a}\log^{s/2}(n\Delta_n)\big)(n\Delta_n)^{4/\alpha} \Big) \\
 = O_P\Big(a_n^2(\omega_f(\Delta_n)^{2-2/a} + (n\Delta_n)^{2-2a}\log^{s/2}(n\Delta_n))E_n^2\Big) =  o_P(E_n^2),\quad \niy,
\end{gather*}
since combining (F\ref{i:F_awp}') and (F\ref{i:F_awnD}') 
we obtain
\[a_n^{1+3(a-1)/(4\alpha)} (n\Delta_n)^{1-a} \to 0, \quad \niy .\]
Hence we conclude exactly as in the proof of Theorem~\ref{T:main}.
\end{proof}


\section*{Appendix B: Auxiliary statements}\label{sect:Aux}

\begin{lemma}\label{lem:lepage}
Let $(E,\mathcal{E},\nu)$ be a $\sigma$-finite measure space, $\Lambda$ be an independently scattered S$\alpha$S random measure on $E$ with the control measure $\nu$, and $\{f_t, t\in\mathbf{T}\}\subset L^\alpha(E,\mathcal{E},\nu)$ be a family of functions indexed by some parameter set $\mathbf{T}$, $\varphi$ be a positive probability density on $E$. Then
$$
X_t = \int_E f_t(x) \Lambda(dx),\quad t\in\mathbf{T},
$$
has the same finite-dimensional distributions as the almost-surely convergent series
$$
X'_t = C_\alpha^{1/\alpha} \sum_{k=1}^{\infty} \Gamma_k^{-1/\alpha} \varphi(\xi_k)^{-1/\alpha} f_t(\xi_k) \zeta_k,\quad t\in \mathbf{T},
$$
 where $\set{\zeta_k,k\ge 1}$ are iid standard Gaussian random variables, $\set{\xi_k,k\ge 1}$ are iid random elements of $E$ with density $\varphi$, $\Gamma_k=\eta_1+\dots+\eta_k$, $\set{\eta_k,k\ge 1}$ are iid $\operatorname{Exp}(1)$--distributed random variables,  and these three sequences are independent;
 $$
 C_\alpha = \left(\ex{|g_1|^{\alpha}}\int_0^\infty x^{-\alpha}\sin x\, dx\right)^{-1} = \begin{cases}
\frac{(1-\alpha)\sqrt{\pi}}{2^{\alpha/2}\Gamma((\alpha+1)/2)\Gamma(2-\alpha)\cos(\pi\alpha/2)}, & \alpha \neq 1,\\
\sqrt{2/\pi}, & 	\alpha = 1.
 \end{cases}
 $$
\end{lemma}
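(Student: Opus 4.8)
The plan is to reduce the assertion, via the Cram\'er--Wold device, to a one-dimensional identity and then to recognise the resulting series as (a mild variant of) the classical LePage representation of $S\al S$ stochastic integrals. Recall first that two families of random variables have the same finite-dimensional distributions precisely when all of their finite linear combinations coincide in law. So fix $m\ge 1$, indices $t_1,\dots,t_m\in\mathbf{T}$ and reals $\theta_1,\dots,\theta_m$, and set $g=\sum_{j=1}^m\theta_j f_{t_j}\in L^{\al}(E,\mathcal E,\nu)$. By linearity of the stable integral, $\sum_{j}\theta_j X_{t_j}=\int_E g\,d\Lambda$, which is $S\al S$ with scale $\norm{g}_{\al}$, i.e.\ has characteristic function $u\mapsto\exp(-\norm{g}_{\al}^{\al}\abs{u}^{\al})$; while $\sum_j\theta_j X'_{t_j}=C_\al^{1/\al}\sum_{k\ge 1}\Gamma_k^{-1/\al}\varphi(\xi_k)^{-1/\al}g(\xi_k)\zeta_k=:S_g$. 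Thus it suffices to prove, for an arbitrary fixed $g\in L^{\al}(\nu)$, that $S_g$ converges almost surely and that $\ex{e^{iuS_g}}=\exp(-\norm{g}_{\al}^{\al}\abs{u}^{\al})$ for all $u\in\R$.

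Next I would pass to a Poisson picture. Since $\set{\Gamma_k,k\ge 1}$ are the arrival times of a unit-rate Poisson process on $(0,\infty)$ and each $\Gamma_k$ is marked independently by $(\xi_k,\zeta_k)$, the point measure $\sum_k\delta_{(\Gamma_k,\xi_k,\zeta_k)}$ is a Poisson random measure on $(0,\infty)\times E\times\R$ with intensity $dr$ times the law $\varphi\,d\nu$ of $\xi_1$ times the standard Gaussian law of $\zeta_1$. Pushing it forward by $H\colon(r,x,z)\mapsto C_\al^{1/\al}r^{-1/\al}\varphi(x)^{-1/\al}g(x)z$ --- which is legitimate because $\varphi>0$ --- one obtains a Poisson random measure on $\R\setminus\set{0}$ whose intensity $\rho$ is, by a direct tail computation, the symmetric measure with $\rho(\set{\abs{w}>u})=C_\al\,\ex{\abs{\zeta_1}^{\al}}\,\norm{g}_{\al}^{\al}\,u^{-\al}$ for $u>0$; the factor $\varphi$ cancels exactly here, which is the whole point of the weight $\varphi^{-1/\al}$. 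Because $0<\al<2$ one has $\int_{\R}(\abs{w}^2\wedge 1)\,\rho(dw)<\infty$.

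The series $S_g$ is exactly the sum of the points of this Poisson random measure, taken in the order of increasing $\Gamma_k$, and its almost sure convergence is the content of the series representation of $S\al S$ integrals: for $\al\le 1$ it is absolute, since $\int_{\set{\abs{w}\le 1}}\abs{w}\,\rho(dw)<\infty$ and only finitely many points exceed $1$ in modulus; for $1<\al<2$ it follows --- after discarding the finitely many ``heavy'' terms (finite by Borel--Cantelli, using $\ex{\abs{\varphi(\xi_1)^{-1/\al}g(\xi_1)\zeta_1}^{\al}}=\ex{\abs{\zeta_1}^{\al}}\norm{g}_{\al}^{\al}<\infty$) --- from a conditional martingale argument: conditionally on $\sigma(\Gamma_k,\xi_k,k\ge 1)$ the (truncated) partial sums form a martingale in the symmetric variables $\zeta_k$ which converges in probability, hence almost surely. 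This is precisely \cite[Theorem~3.10.1]{SamTaq94}, the Gaussian multipliers being absorbed by treating $\set{\abs{\zeta_k},k\ge 1}$ as additional i.i.d.\ marks, which replaces $\int_0^\infty x^{-\al}\sin x\,dx$ in the normalising constant of that theorem by $\ex{\abs{\zeta_1}^{\al}}\int_0^\infty x^{-\al}\sin x\,dx$, in agreement with the definition of $C_\al$. It remains to compute $\ex{e^{iuS_g}}=\exp\!\big(\int_{\R}(e^{iuw}-1)\,\rho(dw)\big)$: by symmetry only the cosine survives, and the substitution $y=\abs{u}w$ together with one integration by parts turns $\int_0^\infty(1-\cos(uw))w^{-\al-1}dw$ into $\tfrac{1}{\al}\abs{u}^{\al}\int_0^\infty x^{-\al}\sin x\,dx$; hence $\int_{\R}(e^{iuw}-1)\,\rho(dw)=-C_\al\,\ex{\abs{\zeta_1}^{\al}}\,\norm{g}_{\al}^{\al}\,\abs{u}^{\al}\int_0^\infty x^{-\al}\sin x\,dx=-\norm{g}_{\al}^{\al}\abs{u}^{\al}$ by the very definition of $C_\al$. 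This is the characteristic function of $\int_E g\,d\Lambda$, which completes the proof.

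The only genuinely delicate point is the almost sure convergence of $S_g$ when $1<\al<2$, where the terms are not absolutely summable; this, and keeping careful track of the constant $C_\al$, are classical and may simply be quoted from \cite{SamTaq94}. Everything else --- the Cram\'er--Wold reduction, the mapping-theorem computation of $\rho$, and the evaluation of the L\'evy exponent --- is routine.
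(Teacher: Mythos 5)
Your proposal is correct, but it takes a genuinely different and considerably longer route than the paper. The paper's own proof is essentially one line: it defines a new independently scattered S$\al$S random measure $M(A)=\int_A\varphi^{1/\al}(x)\,\Lambda(dx)$, whose control measure $\varphi\,d\nu$ is a probability measure, rewrites $X_t=\int_E f_t(x)\varphi(x)^{-1/\al}M(dx)$, and then invokes the standard LePage series representation of \cite[Section~3.11]{SamTaq94} verbatim -- that reference already supplies both the almost sure convergence and the equality of finite-dimensional distributions, with $\set{\xi_k}$ sampled from the (probability) control measure of $M$. You instead re-derive the representation from first principles: Cram\'er--Wold reduction to a single $g\in L^\al(\nu)$, the Poisson mapping theorem to identify the pushed-forward intensity $\rho(\set{\abs{w}>u})=C_\al\,\ex{\abs{\zeta_1}^\al}\norm{g}_\al^\al u^{-\al}$ (your observation that the weight $\varphi^{-1/\al}$ exists precisely to cancel the sampling density is the same cancellation that the paper packages into the change of control measure), and an explicit evaluation of the L\'evy exponent to recover $\exp(-\norm{g}_\al^\al\abs{u}^\al)$; I checked the tail computation, the integration by parts producing $\frac1\al\int_0^\infty x^{-\al}\sin x\,dx$, and the matching with the stated $C_\al$, and they are all correct. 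What your approach buys is transparency -- the reader sees exactly why the constant $C_\al$ and the factor $\varphi(\xi_k)^{-1/\al}$ appear -- at the cost of still having to quote \cite{SamTaq94} for the one genuinely delicate point, the almost sure convergence of the non-absolutely-summable series when $1<\al<2$. Since that citation is unavoidable either way, the paper's reduction-to-the-standard-form argument is the more economical of the two; but your version is a valid, self-contained alternative and correctly isolates where the external input is actually needed.
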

\begin{proof}
The statement follows from \cite[Section 3.11]{SamTaq94} by noting that 
$$
X_t = \int_E f_t(x) \varphi(x)^{-1/\alpha}M(dx),
$$
where $M$ is an independently scattered S$\alpha$S random measure on $E$ defined by 
$$
M(A) = \int_{A} \varphi^{1/\alpha}(x) \Lambda(x),\quad A\in \mathcal{E},
$$
so that the control measure of $M$ has $\nu$-density $\varphi$.
\end{proof}

\begin{lemma}\label{lemma-crossums-int}
Let, for each $n\ge 1$, $\set{\eps_{m,n},m=1,\dots,n}$ be  iid $S\al S$ random variables with scale parameter $\sigma_n$. Let also $\set{a_{j,l,n},\ 1\le j<l\le n}$ be a collection of measurable functions $a_{j,l,n}\colon \R\to \mathbb{C}$ such that 
$$
A_n = \int_{\R}\sum_{1\le j<l\le n}\abs{a_{j,l,n}(\lambda)}^2 d\lambda <\infty.$$
Then 
$$
\int_{\R}\abs{\sum_{1\le j<l\le n} a_{j,l,n}(\lambda)\eps_{j,n}\eps_{l,n}}^2d\la = O_P(A_n\sigma_n^4 n^{4/\alpha-2}),\quad \niy.
$$
\end{lemma}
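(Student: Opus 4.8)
The plan is to use the series representation of Lemma~\ref{lem:lepage} to turn the $S\alpha S$ variables into conditionally Gaussian ones, and then to control a conditional second moment of the quadratic form. Write $Q_n:=\int_\R\abs{\sum_{1\le j<l\le n}a_{j,l,n}(\lambda)\eps_{j,n}\eps_{l,n}}^2 d\lambda$. Expanding the square and integrating over the finite double sum, $Q_n=\sum_{j<l}\sum_{j'<l'}\big(\int_\R a_{j,l,n}(\lambda)\overline{a_{j',l',n}(\lambda)}\,d\lambda\big)\eps_{j,n}\eps_{l,n}\eps_{j',n}\eps_{l',n}$, a polynomial in $(\eps_{1,n},\dots,\eps_{n,n})$ with deterministic coefficients (finite by Cauchy--Schwarz and $A_n<\infty$); hence the law of $Q_n$ depends only on the joint law of $(\eps_{1,n},\dots,\eps_{n,n})$. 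I would therefore apply Lemma~\ref{lem:lepage} with $E=[0,1]$, $\nu$ Lebesgue, $\varphi\equiv 1$ and $f_j=\sigma_n n^{1/\alpha}\ind{I_j}$, $I_j:=\big((j-1)/n,\,j/n\big]$ (so that the $f_j$ have disjoint supports and $\norm{f_j}_\alpha=\sigma_n$), and assume from now on that $\eps_{j,n}=\delta_n\sum_{k\ge1}\Gamma_k^{-1/\alpha}\ind{I_j}(\xi_k)\zeta_k$ with $\delta_n:=C_\alpha^{1/\alpha}\sigma_n n^{1/\alpha}$, where $\set{\Gamma_k}$, $\set{\xi_k}$ (iid uniform on $[0,1]$) and $\set{\zeta_k}$ (iid standard Gaussian) are independent and as in that lemma.

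Next I would condition on $(\Gamma,\xi):=(\set{\Gamma_k},\set{\xi_k})$. Since the $I_j$ are then fixed and pairwise disjoint, $\eps_{1,n},\dots,\eps_{n,n}$ are conditionally independent with $\eps_{j,n}\mid(\Gamma,\xi)\sim N\big(0,\delta_n^2 V_j\big)$, $V_j:=\sum_{k\ge1}\Gamma_k^{-2/\alpha}\ind{I_j}(\xi_k)\le W_\Gamma:=\sum_{k\ge1}\Gamma_k^{-2/\alpha}$. In a quadratic form in independent centred variables all cross-moments vanish, so $\ex{\abs{\sum_{j<l}a_{j,l,n}(\lambda)\eps_{j,n}\eps_{l,n}}^2\,\big|\,\Gamma,\xi}=\delta_n^4\sum_{j<l}\abs{a_{j,l,n}(\lambda)}^2 V_j V_l$, and by Tonelli
\[
\ex{Q_n\mid\Gamma,\xi}=\delta_n^4\sum_{j<l}\Big(\int_\R\abs{a_{j,l,n}(\lambda)}^2 d\lambda\Big)V_j V_l.
\]

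The decisive step is to average over $\xi$ with $\Gamma$ held fixed, rather than estimate $V_jV_l\le W_\Gamma^2$. Since $I_j\cap I_l=\emptyset$ for $j\ne l$, one has $\ex{\ind{I_j}(\xi_k)\ind{I_l}(\xi_k)}=0$, while $\ex{\ind{I_j}(\xi_{k_1})\ind{I_l}(\xi_{k_2})}=n^{-2}$ for $k_1\ne k_2$; hence $\ex{V_jV_l\mid\Gamma}=n^{-2}\sum_{k_1\ne k_2}\Gamma_{k_1}^{-2/\alpha}\Gamma_{k_2}^{-2/\alpha}\le n^{-2}W_\Gamma^2$. With $A_n=\sum_{j<l}\int_\R\abs{a_{j,l,n}(\lambda)}^2 d\lambda$ this yields
\[
\ex{Q_n\mid\Gamma}\le \delta_n^4 n^{-2}W_\Gamma^2 A_n=C_\alpha^{4/\alpha}\,W_\Gamma^2\,\sigma_n^4 n^{4/\alpha-2}A_n.
\]
By the strong law of large numbers $\Gamma_k\sim k$ a.s., and $2/\alpha>1$ because $\alpha<2$, so $W_\Gamma<\infty$ a.s.; in particular $W_\Gamma^2$ is a finite random variable not depending on $n$. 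Writing $c_n:=\sigma_n^4 n^{4/\alpha-2}A_n$, the conditional Markov inequality gives $\pr\big(Q_n>Mc_n\big)=\ex{\pr(Q_n>Mc_n\mid\Gamma)}\le\ex{1\wedge\big(C_\alpha^{4/\alpha}W_\Gamma^2/M\big)}$ for every $M>0$ and every $n$, and the right-hand side tends to $0$ as $M\to\infty$ by dominated convergence. That is precisely $Q_n=O_P(c_n)$, $\niy$, as claimed.

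I expect the only genuine obstacle to be getting the extra factor $n^{-2}$ right: conditioning on $(\Gamma,\xi)$ and crudely using $V_jV_l\le W_\Gamma^2$ produces the useless rate $O_P(\sigma_n^4 n^{4/\alpha}A_n)$, and one really must integrate out the uniform points, exploiting that the bins $I_j$ are disjoint so that the diagonal terms $k_1=k_2$ drop out and every surviving term carries the factor $n^{-2}$ from $\ex{\ind{I_j}(\xi_{k_1})\ind{I_l}(\xi_{k_2})}$. The other ingredients --- the conditional independence and Gaussianity of the $\eps_{j,n}$ given $(\Gamma,\xi)$, the vanishing of the cross-moments, and passing from a conditional second-moment bound with an $n$-free random prefactor to an $O_P$ statement --- are routine.
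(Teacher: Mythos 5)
Your proof is correct and follows essentially the same route as the paper's: both pass to the LePage series, exploit the disjointness of the bins so that the diagonal terms ($k_1=k_2$) vanish and every surviving term picks up the factor $n^{-2}$ from the uniform points, and conclude from the almost sure finiteness of $\sum_{k}\Gamma_k^{-2/\alpha}$. The only minor differences are organizational --- the paper works on $[0,n]$ rather than $[0,1]$ and expands the quadruple LePage sum directly instead of first invoking conditional Gaussianity and independence of the $\eps_{j,n}$ given $(\Gamma,\xi)$ --- and your conditional Markov plus dominated convergence step at the end makes the passage to the $O_P$ statement somewhat more explicit than the paper's.
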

\begin{proof}
W.l.o.g. we can assume that $\sigma_n = 1$. We shall use the LePage series representation. For each $n\ge 1$, the variables $\set{\eps_{m,n},m=1,\dots,n}$  have the same joint distribution as $\set{\Lambda([m-1,m]),m=1,\dots,n}$, where $\Lambda$ is an independently scattered S$\alpha$S random measure on $[0,n]$ with the Lebesgues control measure. By Lemma~\ref{lem:lepage}, this distribution coincides with that of
\begin{equation*}
\tilde \eps_{m,n} = n^{1/\alpha}C_\alpha^{1/\alpha} \sum_{k=1}^\infty \Gamma_k^{-1/\alpha} \ind{[m-1,m]}(\xi_k)\zeta_k,\quad m=1,\dots,n,
\end{equation*}
where $\set{\Gamma_k,k\ge 1}$ and $\set{\zeta_k,k\ge 1}$ are as in Lemma~\ref{lem:lepage}, $\set{\xi_k,k\ge 1}$ are iid $U([0,n])$. Since the boundedness in probability relies only on marginal distributions (for fixed $n$), we can assume that $\eps_{m,n} = \tilde \eps_{m,n}$. Let $\Xi_n(\lambda) = \sum_{1\le j<l\le n} a_{j,l,n}(\lambda)\eps_{j,n}\eps_{l,n}$. A generic term in the expansion of $|\Xi_n(\lambda)|^2$ has, up to a non-random constant, the form 
$$\Gamma_{k_1}^{-1/\alpha}\Gamma_{k_1'}^{-1/\alpha}\Gamma_{k_2}^{-1/\alpha}\Gamma_{k_2'}^{-1/\alpha}\ind{[j_1-1,j_1]}(\xi_{k_1})\ind{[l_1-1,l_1]}(\xi_{k_1'})\ind{[j_2-1,j_2]}(\xi_{k_2})\ind{[l_2-1,l_2]}(\xi_{k_2'})\zeta_{k_1}\zeta_{k_1'}\zeta_{k_2}\zeta_{k_2'}.$$
Recall that $\set{\zeta_k,k\ge 1}$ are independent and centered. Then, given $\Gamma$'s and $\xi$'s, such term has a non-zero expectation only if $k_1=k_2$, $k_1'=k_2'$ or $k_1=k_2'$, $k_2=k_1'$ (for $k_1=k_1'$ it is zero since $j_1\neq l_1$), so we must also have $j_1=j_2$, $l_1=l_2$ or $j_1 = l_2$, $j_2 =l_1$ respectively so that the product of indicators is not zero. The latter, however, is impossible, since $j_1<l_1$ and $j_2<l_2$. 
Consequently, the Lemma of Fatou implies
\begin{gather*}
\ex{|\Xi_n(\lambda)|^2\mid \Gamma} \le  C_\alpha^{4/\alpha} n^{4/\alpha} \sum_{k\neq k'}^\infty \Gamma_{k}^{-2/\alpha}\Gamma_{k'}^{-2/\alpha}\sum_{1\le j<l\le n}^{n}\abs{a_{j,l,n}(\lambda)}^2\ex{\ind{[j-1,j]}(\xi_{k})\ind{[l-1,l]}(\xi_{k'})}\\
= C_\alpha^{4/\alpha}n^{4/\alpha} \sum_{k\ne k'}^\infty \Gamma_{k}^{-2/\alpha}\Gamma_{k'}^{-2/\alpha}\sum_{1\le j<l\le n}^{n}\abs{a_{j,l,n}(\lambda)}^2 P\left(\xi_k\in [j-1,j]\right)P\left(\xi_{k'}\in [l-1,l]\right)
\\\le  C_\alpha^{4/\alpha}n^{4/\alpha-2}\sum_{1\le j<l\le n}\abs{a_{j,l,n}(\lambda)}^2 \left(\sum_{k=1}^\infty\Gamma_{k}^{-2/\alpha}\right)^2.
\end{gather*}
Integrating over $\lambda$, we get 
$$
\ex{\int_{\R}|\Xi_n(\lambda)|^2d\la\,\Big|\, \Gamma}\le  C_\alpha^{4/\alpha}n^{4/\alpha-2}A_n \left(\sum_{k=1}^\infty\Gamma_{k}^{-2/\alpha}\right)^2.
$$
By the strong law of large numbers, $\Gamma_k \sim k$, $k\to\infty$, a.s. Therefore, given $\Gamma$'s, $\int_{\R}|\Xi_n(\lambda)|^2d\la = O_P(A_n n^{4/\alpha-2})$, \niy, whence the required statement follows.
\end{proof}

The following lemma is an immediate corollary of the proof of Lemma~\ref{lemma-crossums-int}.
\begin{lemma}\label{lemma-crossums}
Let, for each $n\ge 1$, $\set{\eps_{m,n},m=1,\dots,n}$ be  iid $S\al S$ random variables with scale parameter $\sigma_n$. Let also $\set{b_{j,l,n},\ 1\le j<l\le n}$ be a set of complex numbers with
$$
B_n = \sum_{1\le j<l\le n}\abs{b_{j,l,n}}^2.$$

Then 
$$
\sum_{1\le j<l\le n} b_{j,l,n}\eps_{j,n}\eps_{l,n}  = O_P(B_n^{1/2}\sigma_n^2 n^{2/\alpha-1}),\quad \niy.
$$
\end{lemma}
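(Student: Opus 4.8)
The plan is to reproduce the proof of Lemma~\ref{lemma-crossums-int} almost verbatim, simply suppressing the variable $\lambda$ and the outer integral $\int_\R\cdots\,d\lambda$. First I would rescale to reduce to $\sigma_n=1$; since each product $\eps_{j,n}\eps_{l,n}$ carries a factor $\sigma_n^2$, the extra $\sigma_n^2$ in the claimed rate is recovered at the very end. For fixed $n$, the vector $(\eps_{1,n},\dots,\eps_{n,n})$ has the same distribution as $(\Lambda([0,1]),\dots,\Lambda([n-1,n]))$ for an independently scattered $S\alpha S$ measure $\Lambda$ on $[0,n]$ with Lebesgue control measure; hence by Lemma~\ref{lem:lepage}, applied with the uniform density on $[0,n]$, we may assume
\[
\eps_{m,n}=n^{1/\alpha}C_\alpha^{1/\alpha}\sum_{k=1}^\infty \Gamma_k^{-1/\alpha}\ind{[m-1,m]}(\xi_k)\zeta_k,\qquad m=1,\dots,n,
\]
with $\set{\Gamma_k}$, $\set{\zeta_k}$, $\set{\xi_k}$ as in Lemma~\ref{lem:lepage}. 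This replacement is harmless because $O_P$-boundedness of $\Xi_n:=\sum_{1\le j<l\le n}b_{j,l,n}\eps_{j,n}\eps_{l,n}$ depends, for each fixed $n$, only on the law of the single random variable $\Xi_n$.

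Next I would estimate the conditional second moment $\ex{\abs{\Xi_n}^2\mid\Gamma}$ (here $\abs{\Xi_n}^2=\Xi_n\overline{\Xi_n}$, as the $b_{j,l,n}$ may be complex and the $\eps$'s are real). Expanding the square and using that the $\zeta_k$ are i.i.d.\ centred, a generic term survives conditional expectation only when the index pairs coincide up to order; together with the indicator constraints $\xi_{k}\in[j-1,j]$ etc.\ and the strict inequalities $j_1<l_1$, $j_2<l_2$ this forces $j_1=j_2$ and $l_1=l_2$ --- exactly the combinatorial cancellation already established inside the proof of Lemma~\ref{lemma-crossums-int}. Since $\pr(\xi_k\in[j-1,j])=1/n$, the same chain of inequalities as there gives
\[
\ex{\abs{\Xi_n}^2\mid\Gamma}\le C_\alpha^{4/\alpha}\,n^{4/\alpha-2}\Big(\sum_{1\le j<l\le n}\abs{b_{j,l,n}}^2\Big)\Big(\sum_{k=1}^\infty\Gamma_k^{-2/\alpha}\Big)^2=C_\alpha^{4/\alpha}\,n^{4/\alpha-2}\,B_n\,\Big(\sum_{k=1}^\infty\Gamma_k^{-2/\alpha}\Big)^2.
\]

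Finally I would deduce the unconditional statement. Because $2/\alpha>1$ and $\Gamma_k\sim k$ a.s.\ by the strong law of large numbers, $M_\Gamma:=\big(\sum_{k\ge1}\Gamma_k^{-2/\alpha}\big)^2<\infty$ a.s.; then the usual device --- fix $\eps>0$, pick $K$ with $\pr(M_\Gamma>K)<\eps/2$, and apply the conditional Chebyshev inequality on $\set{M_\Gamma\le K}$ --- yields $\pr\big(\abs{\Xi_n}>t\,B_n^{1/2}n^{2/\alpha-1}\big)\le\eps/2+C_\alpha^{4/\alpha}K/t^2$ for all $n$, so $\Xi_n=O_P(B_n^{1/2}n^{2/\alpha-1})$, and reinstating $\sigma_n$ gives the claim. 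I do not anticipate any real obstacle here: the combinatorial cancellation is inherited verbatim from Lemma~\ref{lemma-crossums-int}, and the only mildly delicate point, the conditional-to-unconditional passage through the a.s.\ finite random constant $M_\Gamma$, is precisely the routine argument used repeatedly elsewhere in the paper.
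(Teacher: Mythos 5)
Your proposal is correct and is exactly the paper's argument: the paper itself states that Lemma~\ref{lemma-crossums} is an immediate corollary of the proof of Lemma~\ref{lemma-crossums-int}, obtained by dropping the $\lambda$-dependence and the outer integral, which is precisely what you do. The rescaling to $\sigma_n=1$, the LePage representation, the combinatorial cancellation forcing $j_1=j_2$, $l_1=l_2$, and the conditional Chebyshev step on $\{\sum_k\Gamma_k^{-2/\alpha}\le K\}$ all match the intended proof.
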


In the next two lemmas $\set{\Delta_n,n\ge 1}$ is some vanishing sequence, $\set{N_n,n\ge 1}$ is a sequence of positive integers such that $N_n\to\infty$, \niy, and $N_n = o(n)$, \niy. We denote $t_{k,n} = k\Delta_n$, $k\in\Z$, $T_n = N_n\Delta_n$, $n\ge 1$. 

\begin{lemma}\label{lem-smallsquares}
Let $\set{h_n,n\ge1}$ be a sequence  of bounded functions supported by $[-T_n,T_n]$ and $Y_{t,n} = \int_\R h_n(t-s)\Lambda(ds)$, $t\in\R$. Then
$$
\sum_{j=1}^{n} Y_{t_{j,n},n}^2 = O_P\big(\norm{h_n}^2_\infty T_n n^{2/\alpha}\Delta_n^{2/\alpha-1}\big),\quad \niy.
$$
\end{lemma}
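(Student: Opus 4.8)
The plan is to invoke the LePage series representation of Lemma~\ref{lem:lepage}, which is the standard remedy for the infinite variance of $\alpha$-stable integrals: while $Y_{t,n}$ has no second moment, conditionally on the sequence $\Gamma=\{\Gamma_k\}$ it does, and that conditional second moment is exactly what one needs to bound. First I would record the elementary facts that $h_n$, being bounded with support in $[-T_n,T_n]$, lies in $L^\alpha(\R)$ (so $Y_{t,n}$ is well defined), that for $j=1,\dots,n$ the random variable $Y_{t_{j,n},n}$ depends only on the restriction of $\Lambda$ to $I_n:=[-T_n,\,n\Delta_n+T_n]$, and that $|I_n|=n\Delta_n+2T_n\sim n\Delta_n$, $\niy$, since $T_n=N_n\Delta_n=o(n\Delta_n)$.

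Next I would apply Lemma~\ref{lem:lepage} on $E=I_n$ with Lebesgue control measure, the finite index set $\mathbf T=\{t_{1,n},\dots,t_{n,n}\}$, functions $f_{t_{j,n}}(x)=h_n(t_{j,n}-x)$, and the uniform density $\varphi\equiv|I_n|^{-1}$ on $I_n$. Because $\sum_{j=1}^n Y_{t_{j,n},n}^2$ is a measurable functional of the finite vector $(Y_{t_{1,n},n},\dots,Y_{t_{n,n},n})$ and the estimate ``$O_P(c_n)$'' only constrains, for each fixed (large) $n$, the one-dimensional law of $c_n^{-1}\sum_{j=1}^n Y_{t_{j,n},n}^2$, I may assume with no loss of generality that
$$
Y_{t_{j,n},n}=C_\alpha^{1/\alpha}|I_n|^{1/\alpha}\sum_{k=1}^\infty \Gamma_k^{-1/\alpha}\,h_n(t_{j,n}-\xi_k)\,\zeta_k,\qquad j=1,\dots,n,
$$
with $\{\Gamma_k\}$, $\{\zeta_k\}$ as in Lemma~\ref{lem:lepage} and $\{\xi_k\}$ iid uniform on $I_n$.

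Then I would condition on $\Gamma=\{\Gamma_k,k\ge1\}$. The $\zeta_k$ being independent, centred and independent of $\{\xi_k\}$, every cross term vanishes, so Fatou's lemma yields
$$
\ex{Y_{t_{j,n},n}^2\mid\Gamma}\ \le\ C_\alpha^{2/\alpha}\,|I_n|^{2/\alpha-1}\Big(\sum_{k=1}^\infty\Gamma_k^{-2/\alpha}\Big)\int_\R h_n(t_{j,n}-x)^2\,dx ,
$$
and since $\{x:t_{j,n}-x\in[-T_n,T_n]\}\subseteq I_n$ the last integral equals $\int_\R h_n(y)^2\,dy\le 2T_n\norm{h_n}_\infty^2$. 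Summing over $j$ and using $|I_n|\sim n\Delta_n$,
$$
\ex{\sum_{j=1}^n Y_{t_{j,n},n}^2\ \Big|\ \Gamma}\ \le\ 2C_\alpha^{2/\alpha}\,\norm{h_n}_\infty^2\,T_n\,n\,|I_n|^{2/\alpha-1}\sum_{k=1}^\infty\Gamma_k^{-2/\alpha}\ \le\ C\,\norm{h_n}_\infty^2\,T_n\,n^{2/\alpha}\Delta_n^{2/\alpha-1}\,S ,\qquad \niy,
$$
where $S:=\sum_{k\ge1}\Gamma_k^{-2/\alpha}<\infty$ almost surely, because $0<\alpha<2$ (so $2/\alpha>1$) and $\Gamma_k\sim k$ a.s.\ by the strong law of large numbers.

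Finally I would deconditionalise by a routine tightness argument: given $\eps>0$, choose $L$ with $\pr(C S>L)<\eps/2$ and then, by the conditional Markov inequality applied on the event $\{CS\le L\}$,
$$
\pr\!\Big(\sum_{j=1}^n Y_{t_{j,n},n}^2>K\,\norm{h_n}_\infty^2 T_n n^{2/\alpha}\Delta_n^{2/\alpha-1}\Big)\ \le\ \pr(C S>L)+\frac{L}{K}<\eps
$$
for all $n$ once $K\ge 2L/\eps$, which is precisely the asserted $O_P$-bound. The only genuinely delicate point is the passage to the conditional second moment at the start: it works only because $h_n$ is bounded and $\sum_k\Gamma_k^{-2/\alpha}$ is summable for $\alpha<2$, and that is exactly why the LePage representation must be brought in rather than estimating $\sum_j Y_{t_{j,n},n}^2$ directly.
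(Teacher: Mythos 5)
Your proof is correct and takes essentially the same route as the paper's: the LePage representation on $[-T_n,\,n\Delta_n+T_n]$, conditioning on $\Gamma$, bounding the conditional second moment by $C\norm{h_n}^2_\infty T_n\, n^{2/\alpha}\Delta_n^{2/\alpha-1}\sum_{k}\Gamma_k^{-2/\alpha}$, and deconditioning via a.s.\ finiteness of $\sum_k\Gamma_k^{-2/\alpha}$. The only cosmetic difference is that you bound $\ex{h_n(t_{j,n}-\xi_k)^2}$ through $\int_{\R}h_n^2\le 2T_n\norm{h_n}_\infty^2$, whereas the paper uses $h_n^2\le\norm{h_n}_\infty^2\ind{[t_{j,n}-T_n,t_{j,n}+T_n]}$ and computes the resulting probability --- the same estimate.
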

\begin{proof}
We can assume that $\norm{h_n}_\infty=1$. As in Lemma \ref{lemma-crossums-int}, we also use the LePage representation, so small details will be omitted. Namely, for each $n\ge 1$, the process $\set{Y_{t,n},t\in [0,n\Delta_n]}$ has the same distribution as
\begin{equation*}
\tilde Y_{t,n} = C_\alpha^{1/\alpha}(n\Delta_n+2T_n)^{1/\alpha}\sum_{k=1}^\infty \Gamma_k^{-1/\alpha} h_n(t-\xi_k)\zeta_k,\quad t\in [0,n\Delta_n],
\end{equation*}
where $\set{\Gamma_k,k\ge 1}$ and $\set{\zeta_k,k\ge 1}$ are as in Lemma~\ref{lem:lepage}, $\set{\xi_k,k\ge 1}$ are iid $U([-T_n,n\Delta_n+T_n])$. We can assume that $Y_{t,n}=\tilde Y_{t,n}$. Then 
\begin{gather*}
\ex{\sum_{j=1}^{n} Y_{t_{j,n},n}^2\,\Big|\,\Gamma} \le C_\alpha^{2/\alpha}(n\Delta_n+2T_n)^{2/\alpha} \sum_{j=1}^n \sum_{k=1}^\infty \Gamma_k^{-2/\alpha}P(\xi_k\in[t_{j,n}-T_n,t_{j,n}+T_n])\\
\le 2C_\alpha^{2/\alpha}nT_n (n\Delta_n + 2T_n)^{2/\alpha-1} \sum_{k=1}^{\infty}\Gamma_k^{-2/\alpha}.
\end{gather*}
It follows that, given $\Gamma$, $\sum_{j=1}^{n} Y_{t_{j,n},n}^2 = O_P (T_n n^{2/\alpha}\Delta_n^{2/\alpha-1})$, \niy, which yields the statement.
\end{proof}

\begin{lemma}\label{lem-smallsquares-int}
	Let $\set{m_n,n\ge 1}$ be a sequence of positive integers such that $m_n\to\infty$ as 
	\niy. For a deterministic sequence $\set{W_n(m),n\ge 1, m=-m_n,\dots,m_n}$ satisfying
	\emph{(W\ref{item:Wpos})}--\emph{(W\ref{item:Wsum})}
	 and continuous functions $h_{n,m}\colon [-T_n,n\Delta_n+T_n]\times\R\to\mathbb{C}$, $n\ge 1, m= -m_n,\dots,m_n$,    define  
	\[ R_{n}(\lambda) = \sum_{\abs{m}\le m_n}W_n(m)  \abs{\int_{-T_n}^{n\Delta_n+T_n} h_{n,m}(t,\lambda)\Lambda(dt)}^2.\]
	 Then
	$$
	\int_{\R} R_{n}(\lambda)^2 d\lambda  =  O_P\big(H_n^* (n\Delta_n)^{4/\alpha}\big),\quad \niy,
	$$
	where $H_n^*=\int_\mathbb{R} H(\lambda)\, d\lambda$ for $H(\lambda) =  \sum_{\abs{m}\le m_n} W_n(m)\norm{h_{n,m}(\cdot,\lambda)}_\infty^4.$
\end{lemma}
\begin{proof}
By Lemma \ref{lem:lepage},  for each $n\ge 1$ the family $$H_{n,m}(\lambda) = \int_{-T_n}^{n\Delta_n+T_n}h_{n,m}(t,\lambda)\Lambda(dt),\quad |m|\le m_n,\lambda\in\R,
$$
has the same distribution as
$$
\tilde H_{n,m}(\lambda) = C_\alpha^{1/\alpha}(n\Delta_n+2T_n)^{1/\alpha}\sum_{k=1}^\infty \Gamma_k^{-1/\alpha} h_{n,m}(\xi_k,\lambda)\zeta_k,\quad |m|\le m_n, \lambda\in \R,
$$
where the variables $\Gamma_k,\xi_k,\zeta_k, k\ge 1$, are the same as in the proof of  Lemma \ref{lem-smallsquares}. Again, we can assume that $\tilde H_{n,m}(\lambda) = \tilde H_{n,m}(\lambda)$. Jensen's inequality implies $(\sum_{\abs{m}\le m_n} W_n(m) a_m)^2 \le \sum_{\abs{m}\le m_n} W_n(m) a_m^2$. Thus we estimate
\begin{gather*}
\ex{ \int_{\R} R_{n}(\lambda)^2 d\lambda\, \Big|\, \Gamma,\xi} \\
\le 
C^{4/\alpha}_\alpha (n\Delta_n + 2T_n)^{4/\alpha}\int_{\R} \sum_{\abs{m}\le m_n}W_n(m) \ex{\abs{\sum_{k=1}^\infty \Gamma_k^{-1/\alpha} h_{n,m}(\xi_k,\lambda)\zeta_k}^4  \, \bigg|\, \Gamma,\xi}d\lambda \\
\le C (n\Delta_n)^{4/\alpha} \int_{\R} \sum_{\abs{m}\le m_n}W_n(m) \ex{\abs{\sum_{k=1}^\infty \Gamma_k^{-1/\alpha} h_{n,m}(\xi_k,\lambda)\zeta_k}^2  \, \bigg|\, \Gamma,\xi}^2 d\lambda\\
= C (n\Delta_n)^{4/\alpha} \int_{\R} \sum_{\abs{m}\le m_n}W_n(m) \bigg( \sum_{k=1}^\infty \Gamma_k^{-2/\alpha} \abs{h_{n,m}(\xi_k,\lambda)}^2   \, \bigg)^2 d\lambda\\
\le  C (n\Delta_n)^{4/\alpha} \int_{\R} \sum_{\abs{m}\le m_n}W_n(m) \bigg( \sum_{k=1}^\infty \Gamma_k^{-2/\alpha}   \, \bigg)^2 \norm{h_{n,m}(\cdot,\lambda)}^4_\infty d\lambda,
\end{gather*}
for a generic constant $C>0$ where we have used that, given $\Gamma$ and $\xi$, the series  $\sum_{k=1}^\infty \Gamma_k^{-1/\alpha} h_{n,m}(\xi_k,\lambda)\zeta_k$ has a centered Gaussian distribution. Therefore,
\begin{gather*}
\ex{ \int_{\R} R_{n}(\lambda)^2 d\lambda\, \Big|\, \Gamma} \\
\le C (n\Delta_n)^{4/\alpha} \bigg(\sum_{k=1}^\infty \Gamma_k^{-2/\alpha}\bigg)^2 \int_{\R} \sum_{\abs{m}\le m_n}W_n(m)\norm{h_{n,m}(\cdot,\lambda)}^4_\infty\, d\lambda\\ 
= C (n\Delta_n)^{4/\alpha} \bigg(\sum_{k=1}^\infty \Gamma_k^{-2/\alpha}\bigg)^2 H_n^*.
\end{gather*}
As a result, given $\Gamma$, $\int_{\R} R_{n}(\lambda)^2 d\lambda = O_P\big(H_n^*(n\Delta_n)^{4/\alpha} \big)$, $\niy$, which implies the statement.
\end{proof}

\begin{lemma}\label{lem:fourier}
Let a bounded uniformly continuous function $f\colon \R \to \mathbb{R}$ satisfy \emph{(F\ref{i:F_asy}$'$)} and let $\Delta_n$, $m_n$, $W_n(m)$ and $\nu_n(m,\lambda)$ be as defined in Sections \ref{sect.Intro} and \ref{sect.Est} fulfilling \emph{(W\ref{item:Wpos})}, \emph{(W\ref{item:Wsum})} and \emph{(W\ref{item:W2m})}. If the support of $f$ is bounded, let it be contained in $[-T,T]$ and put $T_n:=T$. If it is unbounded, then choose a sequence $(T_n)_{n\in\mathbb{N}}$ with $T_n\to \infty$ and $T_n \omega_f(\Delta_n) \to 0$ as $n\to\infty$. W.l.o.g.\ $N_n:=T_n/\Delta_n$ is a sequence of integers. Put 
$$
F_n(\lambda) = \sum_{\abs{m}\le m_n} W_n(m) \abs{\sum_{k=-N_n}^{N_n-1} f(t_{k,n})e^{it_{k,n}\nu_{n}(m,\lambda)}}^2.
$$
Then 
$$
\abs{\left|\hat f(\lambda)\right|^2 - \Delta_n^2 F_{n}(\lambda)} = O\Big(\big(W_n^{(2)}\big)^{1/2}(n\Delta_n)^{-1} +  T_n \omega_f(\Delta_n) + T_n^{1-a} + \abs{\lambda}\Delta_n \Big),\quad \niy. 
$$
If $f$ is supported by $[-T_n,T_n]$, then 
$$
\abs{ \left|\hat f(\lambda)\right|^2 - \Delta_n^2 F_{n}(\lambda)} = O\Big(\big(W_n^{(2)}\big)^{1/2}(n\Delta_n)^{-1} +  \omega_f(\Delta_n) +  \abs{\lambda}\Delta_n \Big),\quad \niy. 
$$
\end{lemma}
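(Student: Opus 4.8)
The plan is to derive the statement from a single pointwise estimate for the error made when the Fourier integral $\hat f(\mu)$ is replaced by the truncated Riemann sum $a_m(\lambda):=\Delta_n\sum_{k=-N_n}^{N_n-1}f(t_{k,n})e^{it_{k,n}\nu_n(m,\lambda)}$, followed by averaging the squares over $|m|\le m_n$ against the weights $W_n(m)$. I will use repeatedly that $f$ bounded together with \emph{(F\ref{i:F_asy}$'$)} (where $a>2$) forces $f\in L^1(\R)$ and $xf\in L^1(\R)$; in particular $\hat f$ is bounded by $\|f\|_1$ and, by differentiation under the integral sign, $\|xf\|_1$-Lipschitz, so $|\hat f(\mu)-\hat f(\lambda)|\le\|xf\|_1\,|\mu-\lambda|$. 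Recall also that $f$ is even, so that $\hat f(\mu)=\int_{\R}f(x)e^{i\mu x}dx$.

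First I would show, for an arbitrary frequency $\mu\in\R$,
\[
\Bigl|\Delta_n\sum_{k=-N_n}^{N_n-1}f(t_{k,n})e^{it_{k,n}\mu}-\hat f(\mu)\Bigr|=O\bigl(T_n\omega_f(\Delta_n)+|\mu|\Delta_n+T_n^{1-a}\bigr),\qquad\niy,
\]
where the $T_n^{1-a}$ term is absent whenever $\mathrm{supp}\,f\subseteq[-T_n,T_n]$. Write $\hat f(\mu)=\int_{-T_n}^{T_n}f(x)e^{i\mu x}dx+\int_{|x|>T_n}f(x)e^{i\mu x}dx$: the tail integral is bounded by $\int_{|x|>T_n}|f(x)|dx=O(T_n^{1-a})$ by \emph{(F\ref{i:F_asy}$'$)}, and vanishes if $f$ is supported by $[-T_n,T_n]$. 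For the truncated part,
\[
\Delta_n\sum_{k=-N_n}^{N_n-1}f(t_{k,n})e^{it_{k,n}\mu}-\int_{-T_n}^{T_n}f(x)e^{i\mu x}dx=\sum_{k=-N_n}^{N_n-1}\int_{t_{k,n}}^{t_{k+1,n}}\bigl(f(t_{k,n})e^{it_{k,n}\mu}-f(x)e^{i\mu x}\bigr)dx,
\]
and I would split the integrand as $(f(t_{k,n})-f(x))e^{it_{k,n}\mu}+f(x)(e^{it_{k,n}\mu}-e^{i\mu x})$. On $[t_{k,n},t_{k+1,n})$ the first term is at most $\omega_f(\Delta_n)$ in modulus, so it contributes in total at most $2N_n\Delta_n\,\omega_f(\Delta_n)=2T_n\omega_f(\Delta_n)$. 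For the oscillatory term the key step is to use $|e^{it_{k,n}\mu}-e^{i\mu x}|\le|\mu|\,|t_{k,n}-x|\le|\mu|\Delta_n$ and then integrate the surviving factor $|f(x)|$ against $\|f\|_1$, giving only $\|f\|_1|\mu|\Delta_n$ — crucially \emph{without} the extra factor $T_n$ that a crude bound by $2T_n\|f\|_\infty|\mu|\Delta_n$ would produce.

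Next I would take $\mu=\nu_n(m,\lambda)=\lambda+m/(n\Delta_n)$. Using $|\nu_n(m,\lambda)|\Delta_n\le|\lambda|\Delta_n+|m|/n\le|\lambda|\Delta_n+|m|/(n\Delta_n)$ (for $\Delta_n\le1$) together with the Lipschitz bound for $\hat f$, this yields $|a_m(\lambda)-\hat f(\lambda)|\le\delta_m$, where $\delta_m:=C\bigl(T_n\omega_f(\Delta_n)+|\lambda|\Delta_n+T_n^{1-a}+|m|/(n\Delta_n)\bigr)$. Since $|a_m(\lambda)|\le\Delta_n\sum_{k=-N_n}^{N_n-1}|f(t_{k,n})|$, which is bounded uniformly in $m$, $\lambda$ and large $n$ (using boundedness of $f$ near the origin and \emph{(F\ref{i:F_asy}$'$)} at infinity), and $\|\hat f\|_\infty\le\|f\|_1<\infty$, we get $\bigl||a_m(\lambda)|^2-|\hat f(\lambda)|^2\bigr|\le C|a_m(\lambda)-\hat f(\lambda)|\le C\delta_m$. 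As $\sum_{|m|\le m_n}W_n(m)=1$ by \emph{(W\ref{item:Wsum})}, one has $|\hat f(\lambda)|^2-\Delta_n^2F_n(\lambda)=\sum_{|m|\le m_n}W_n(m)\bigl(|\hat f(\lambda)|^2-|a_m(\lambda)|^2\bigr)$, hence
\[
\bigl||\hat f(\lambda)|^2-\Delta_n^2F_n(\lambda)\bigr|\le C\sum_{|m|\le m_n}W_n(m)\,\delta_m.
\]
The $\omega_f$-, $\lambda$- and $T_n^{1-a}$-parts of $\delta_m$ sum to themselves by \emph{(W\ref{item:Wsum})}, while by the Cauchy--Schwarz inequality together with \emph{(W\ref{item:Wsum})} and \emph{(W\ref{item:W2m})},
\[
\frac{1}{n\Delta_n}\sum_{|m|\le m_n}W_n(m)|m|\le\frac{1}{n\Delta_n}\Bigl(\sum_{|m|\le m_n}W_n(m)\Bigr)^{1/2}\Bigl(\sum_{|m|\le m_n}m^2W_n(m)\Bigr)^{1/2}=\frac{\bigl(W_n^{(2)}\bigr)^{1/2}}{n\Delta_n},
\]
which produces exactly the asserted bound $O\bigl((W_n^{(2)})^{1/2}(n\Delta_n)^{-1}+T_n\omega_f(\Delta_n)+T_n^{1-a}+|\lambda|\Delta_n\bigr)$.

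When $f$ is supported by $[-T_n,T_n]$ — in particular in the compact-support case, where $T_n=T$ is a fixed constant — the truncation tail vanishes, so the $T_n^{1-a}$ term drops out and $T_n\omega_f(\Delta_n)=O(\omega_f(\Delta_n))$, giving the second displayed bound. The only genuinely delicate point I anticipate is the oscillatory part of the Riemann-sum error: one must integrate $|f|$ over $[-T_n,T_n]$ (exploiting $f\in L^1$) rather than bounding it by $2T_n\|f\|_\infty$, so that the localization length $T_n$ does not multiply the $|\lambda|\Delta_n$ term; the rest is routine bookkeeping with $|a^2-b^2|\le|a-b|\,(|a|+|b|)$, Cauchy--Schwarz, and the hypotheses \emph{(W\ref{item:Wpos})}--\emph{(W\ref{item:W2m})} and \emph{(F\ref{i:F_asy}$'$)}.
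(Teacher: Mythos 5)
Your proposal is correct and follows essentially the same route as the paper's proof: both control the Riemann-sum error through $\omega_f(\Delta_n)$ and $\|f\|_1$ (the latter being exactly the point that keeps the factor $T_n$ off the $\abs{\lambda}\Delta_n$ term), the truncation tail through (F\ref{i:F_asy}$'$), and the frequency shift $\nu_n(m,\lambda)-\lambda$ through the Lipschitz bound $\|\hat f'\|_\infty\le\|xf\|_1$ combined with (W\ref{item:Wsum}) and (W\ref{item:W2m}). The only difference is bookkeeping: you make a single pointwise first-order estimate and then use $\aabs{\abs{a}^2-\abs{b}^2}\le\abs{a-b}(\abs{a}+\abs{b})$ together with Cauchy--Schwarz in $m$, whereas the paper passes through three intermediate quantities with repeated use of $\abs{a^2-b^2}\le\delta a^2+(1+\delta^{-1})\abs{a-b}^2$ and optimized $\delta$'s; the resulting bounds coincide.
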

\begin{proof}
Start by studying the expression 
$$
F_{1,n}(\lambda) = \sum_{|m|\le m_n} W_n(m) \abs{\hat f(\nu_n(m,\lambda))}^2.
$$
Using  \eqref{simpleineq}, it can be shown that for any $\delta>0$
\begin{gather*}
\abs{\abs{\hat f(\lambda)}^2 - F_{1,n}(\lambda)}\le \delta \abs{\hat f(\lambda)}^2 + (1+ \delta^{-1})\sum_{|m|\le m_n} W_n(m)\Big( \abs{\hat f(\la)} - \abs{\hat f(\nu_n(m,\lambda))} \Big)^2. 
\end{gather*}
By {(F\ref{i:F_asy}$'$)}   $\hat{f}^\prime$ is bounded since
 obviously $\widehat{f(t)}^\prime = -i \widehat{tf(t)}$ and $tf(t)$ is integrable if $a>2$. So
\begin{gather*}
\sum_{|m|\le m_n} W_n(m)\Big( \abs{\hat f(\la)} - \abs{\hat f(\nu_n(m,\lambda))} \Big)^2 \le \norm{\hat f'}^2_\infty \sum_{|m|\le m_n}W_n(m)\big(\la - \nu_n(m,\lambda))\big)^2 \\
\le \frac{\norm{\hat f'}^2_\infty}{(n\Delta_n)^2}
\sum_{|m|\le m_n}m^2 W_n(m) = \frac{\norm{\hat f'}^2_\infty W_n^{(2)}}{(n\Delta_n)^2}.
\end{gather*}
Setting $\delta = \big(W_n^{(2)}\big)^{1/2}(n\Delta_n )^{-1}$, we get
$$
\abs{ \abs{\hat f(\lambda)}^2 - F_{1,n}(\lambda)}= O\big(\big(W_n^{(2)}\big)^{1/2}(n\Delta_n)^{-1}\big),\quad \niy.
$$
Further, denote 
$
F_{2,n}(\lambda) = \sum_{|m|\le m_n} W_n(m) \abs{\widehat{ f_n}(\nu_n(m,\lambda))}^2
$
and write for some $\theta>0$, using \eqref{simpleineq}, 
\begin{gather*}
\abs{F_{1,n}(\lambda) - F_{2,n}(\lambda)} \le  \sum_{\abs{m}\le m_n}W_n(m) \big| \hat f(\nu_n(m,\lambda))^2 - \hat f_n(\nu_n(m,\lambda))^2 \big|\\
\le \theta F_{1,n}(\lambda) + (1+\theta^{-1})2\sum_{\abs{m}\le m_n}W_n(m)\abs{\int_{-T_n}^{T_n}
\big(f(s)- f_n(s)\big)e^{is\nu_n(m,\lambda)}ds}^2\\
+ (1+\theta^{-1})2\sum_{\abs{m}\le m_n}W_n(m)\abs{\int_{\set{s:|s|>T_n}}
f(s)e^{is\nu_n(m,\lambda)}ds}^2
\\
\le \theta F_{1,n}(\lambda) + 8T_n^2(1+\theta^{-1})\omega_f(\Delta_n)^2  + 
C(1+\theta^{-1})\left(\int_{\set{s:|s|>T_n}}|s|^{-a}ds\right)^2\\
\le \theta F_{1,n}(\lambda) + C(1+\theta^{-1})\big(T_n^2 \omega_f(\Delta_n)^2 + T_n^{2-2a}\big).
\end{gather*}
Setting $\theta = T_n\omega_f(\Delta_n) + T_n^{1-a}$, we get 
$$
\abs{F_{1,n}(\lambda) - F_{2,n}(\lambda)} = O(T_n\omega_f(\Delta_n) + T_n^{1-a}),\quad\niy.
$$
Finally, for $\kappa>0$
\begin{gather*}
\abs{F_{2,n}(\lambda) - \Delta_n^2 F_{n}(\lambda)} \\
\le \kappa F_{2,n}(\lambda) + (1+\kappa^{-1})\sum_{\abs{m}\le m_n}W_n(m)\abs{\sum_{k=-N_n}^{N_n-1}\int_{t_{k,n}}^{t_{k+1,n}}
f(t_{k,n})\big(e^{is\nu_n(m,\lambda)}-e^{it_{k,n}\nu_n(m,\lambda)}\big)ds}^2\\
\le \kappa F_{2,n}(\lambda) + (1+\kappa^{-1})\sum_{\abs{m}\le m_n}W_n(m)\left(\sum_{k=-N_n}^{N_n-1}\int_{t_{k,n}}^{t_{k+1,n}}
\abs{f(t_{k,n})}\Delta_n\nu_n(m,\lambda)  ds\right)^2\\
\\
\le \kappa F_{2,n}(\lambda) +
C(1+\kappa^{-1})\sum_{\abs{m}\le m_n}W_n(m)\norm{f}^2_1\big(\Delta_n\nu_n(m,\lambda) \big)^2\\
\le \kappa F_{2,n}(\lambda) +
C(1+\kappa^{-1})\norm{f}^2_1\Delta_n^2\sum_{\abs{m}\le m_n}W_n(m)\left(\lambda^2 + \frac{m^2}{(n\Delta_n)^2}\right)
\\ = \kappa F_{2,n}(\lambda) +
C(1+\kappa^{-1})\norm{f}^2_1\Delta_n^2\left(\lambda^2 + \frac{W_n^{(2)}}{(n\Delta_n)^2}\right).
\end{gather*}
Taking $\kappa = \Delta_n\big(\abs{\lambda} + \big(W_n^{(2)}\big)^{1/2}(n\Delta_n )^{-1}\big)$,
we obtain
$$
\abs{F_{2,n}(\lambda) - \Delta_n^2 F_{n}(\lambda)} = O\left(\Delta_n\big( \abs{\lambda} + \big(W_n^{(2)}\big)^{1/2}(n\Delta_n )^{-1}\big)\right),\quad \niy.
$$
Combining the estimates, we arrive at
$$
\abs{ \abs{\hat f(\lambda)}^2 - \Delta_n^2 F_{n}(\lambda)} = O\Big(\big(W_n^{(2)}\big)^{1/2}(n\Delta_n)^{-1} +  T_n \omega_f(\Delta_n) + T_n^{1-a} + \abs{\lambda}\Delta_n \Big),\quad \niy,
$$
as required. The second statement follows easily, since in this case $$\int_{\set{s:|s|>T_n}}
f(s)e^{is\nu_n(m,\lambda)}ds = 0.\qedhere$$
\end{proof}

\begin{lemma}\label{smoothingestimate}
	Let $\set{m_n,n\ge 1}$ be a sequence of positive integers such that $m_n\to\infty$, $m_n = o(n)$, \niy, let $\set{K_n(m),n\ge 1, m=-m_n,\dots,m_n}$ be a sequence in $\mathbb{R}$, and let $\set{W_n(m),n\ge 1, m=-m_n,\dots,m_n}$ be a sequence of filters satisfying \textup{(W\ref{item:Wpos})--(W\ref{item:Wsum})}. Then 
	$$
	S_n = \sum_{j_1,j_2=1}^{n}\abs{\sum_{\abs{m}\le m_n} W_n(m)K_n(m) e^{i (j_1-j_2)m/n} }^2= O(W_n^* (K_n^* n)^2),\ \niy
	$$
	with $W_n^* = \max_{\abs{m}\le m_n} W_n(m)$, $K_n^* = \max_{\abs{m}\le m_n} |K_n(m)|$.
\end{lemma}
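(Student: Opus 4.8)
The plan is to expand the modulus squared, diagonalise using the fact that $W_n(m)K_n(m)$ is real, and then estimate the resulting geometric (Dirichlet-type) sums. Put $c_{m,n}=W_n(m)K_n(m)$, which is real by (W\ref{item:Wpos}), and $D_n(m)=\sum_{j=1}^n e^{ijm/n}$. Multiplying out and summing over $j_1,j_2$,
\[
S_n=\sum_{\abs{m_1},\abs{m_2}\le m_n}c_{m_1,n}c_{m_2,n}\sum_{j_1,j_2=1}^n e^{i(j_1-j_2)(m_1-m_2)/n}=\sum_{\abs{m_1},\abs{m_2}\le m_n}c_{m_1,n}c_{m_2,n}\,\abs{D_n(m_1-m_2)}^2 .
\]

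Next I would record elementary bounds on $D_n$: obviously $D_n(0)=n$, while for $1\le\abs{m}\le 2m_n$ the geometric summation formula gives $\abs{D_n(m)}=\abs{\sin(m/2)}/\abs{\sin(m/(2n))}$, and since $m_n=o(n)$, for $n$ large $\abs{m}/(2n)\le\pi/2$, so $\abs{\sin(m/(2n))}\ge\abs{m}/(\pi n)$ and hence $\abs{D_n(m)}\le\pi n/\abs{m}$. I then split $S_n$ into its diagonal part ($m_1=m_2$) and its off-diagonal part. For the diagonal part, using $\abs{K_n(m)}\le K_n^*$, $W_n(m)\le W_n^*$ and (W\ref{item:Wsum}),
\[
\sum_{\abs{m}\le m_n}c_{m,n}^2\,\abs{D_n(0)}^2=n^2\sum_{\abs{m}\le m_n}W_n(m)^2K_n(m)^2\le (K_n^*)^2n^2W_n^*\sum_{\abs{m}\le m_n}W_n(m)=(K_n^*n)^2W_n^* .
\]
For the off-diagonal part, bounding $\abs{c_{m_i,n}}\le K_n^*W_n(m_i)$ and using $\abs{D_n(m_1-m_2)}^2\le\pi^2n^2/(m_1-m_2)^2$,
\[
\Bigl\lvert\sum_{m_1\ne m_2}c_{m_1,n}c_{m_2,n}\,\abs{D_n(m_1-m_2)}^2\Bigr\rvert\le\pi^2n^2(K_n^*)^2\sum_{m_1\ne m_2}\frac{W_n(m_1)W_n(m_2)}{(m_1-m_2)^2} .
\]
Here I would estimate one weight factor $W_n(m_1)\le W_n^*$, note that for each fixed $m_2$ one has $\sum_{m_1\ne m_2}(m_1-m_2)^{-2}\le 2\sum_{k\ge1}k^{-2}=\pi^2/3$, and finally use $\sum_{\abs{m_2}\le m_n}W_n(m_2)=1$, so that this part is at most $\tfrac{\pi^4}{3}(K_n^*n)^2W_n^*$. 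Adding the two parts gives $S_n=O(W_n^*(K_n^*n)^2)$.

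These computations are all routine; the only place requiring a little care is the off-diagonal estimate, which works because the geometric sum decays like $n/\abs{m_1-m_2}$ (this is where $m_n=o(n)$ is used, to linearise the sine) and because the total mass $\sum_m W_n(m)$ is normalised to one, so that $W_n^*$ can absorb precisely one of the two weight factors while the remaining double sum is controlled by the convergent series $\sum_k k^{-2}$. No assumptions beyond (W\ref{item:Wpos})--(W\ref{item:Wsum}) and $m_n=o(n)$ enter, and the numbers $K_n(m)$ play no role other than through the crude bound $\abs{K_n(m)}\le K_n^*$.
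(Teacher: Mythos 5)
Your proof is correct and follows essentially the same route as the paper's: expand the square, separate the diagonal terms (bounded by $(K_n^*n)^2W_n^*$ via $\sum_m W_n(m)=1$) from the off-diagonal ones, and control the latter by the Dirichlet-kernel bound $\abs{D_n(m_1-m_2)}\le Cn/\abs{m_1-m_2}$ (valid for large $n$ because $m_n=o(n)$), which reduces the double sum to the convergent series $\sum_k k^{-2}$. The only cosmetic difference is that the paper bounds the denominator via $\abs{e^{ix}-1}\ge L\abs{x}$ rather than through the explicit sine formula.
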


\begin{proof}
Write 
	\begin{gather*}
		S_n = \sum_{j_1,j_2=1}^{n}\sum_{m,m' = -m_n}^{m_n}
		W_n(m)K_n(m)W_n(m')K_n(m') e^{i (j_1-j_2)(m-m')/n} \\
		= \sum_{m,m' = -m_n}^{m_n}W_n(m)K_n(m)W_n(m')K_n(m')\abs{\sum_{j=1}^{n}e^{i\, j(m-m')/n}}^2 \\
		= n^2\sum_{\abs{m}\le m_n} W_n(m)^2K_n(m)^2  + 2\sum_{ m<m'}W_n(m)K_n(m)W_n(m')K_n(m')\abs{\frac{e^{i(m-m')}-1}{e^{i(m-m')/n}-1}}^2\\
		\le W_n^* (K_n^*)^2 \left(n^2\sum_{\abs{m}\le m_n} W_n(m) + 2\sum_{-m_n\le m<m'\le m_n}W_n(m)\abs{\frac{e^{i(m-m')}-1}{e^{i(m-m')/n}-1}}^2\right)\\
		\le W_n^* (K_n^*)^2\left(n^2 + 8\sum_{\abs{m}\le m_n} W_n(m)\sum_{k=1}^{2 m_n}\frac{1}{\abs{e^{i k/n}-1}^2}\right).
	\end{gather*}
	Note that for $x\in[0,1]$, $\abs{e^{ix}-1}\ge L\abs{x}$ with some positive constant $L$. Since $m_n = o(n)$, \niy, for all $n$ large enough it holds $m_n\le n/2$. Therefore, $\abs{e^{ik/n}-1}\ge L\abs{k/n}$ for $k=1,\dots,2m_n$, whence
	\begin{gather*}
	S_n	\le W_n^*(K_n^*)^2 \left(n^2 + 8L^{-2} n^2\sum_{\abs{m}\le m_n} W_n(m)\sum_{k=1}^{2 m_n}\frac{1}{k^2}\right)
	\le W_n^* (K_n^*)^2 n^2\left (1+ 8L^{-2}\sum_{k=1}^{\infty}\frac{1}{k^2}\right),
	\end{gather*}
as required.
\end{proof}



\bibliographystyle{abbrv}
\bibliography{abbrev,bib}
\end{document}